\numberwithin{equation}{section}
\DeclarePairedDelimiter{\norm}{\lVert}{\rVert}
\newcommand\ep{\varepsilon}
\newcommand\zz{\mathbb{Z}}
\newcommand\rr{\mathbb{R}}
\newcommand\cc{\mathbb{C}}
\newcommand\hh{\mathbb H}
\newcommand\cmm{\mathcal{M}} 
\newcommand\cdd{\mathcal{D}} 
\newcommand\css{\mathcal{S}}
\newcommand\cbb{\mathcal{B}}
\newcommand\cff{\mathcal{F}}
\def\aa{\mathcal{A}}
\def\cC{\mathcal{C}}
\def\pf{(-\Delta)^{\sigma/2}}
\newcommand{\real}{\mathrm{Re}}
\newcommand{\imag}{\mathrm{Im}}
\DeclareMathOperator*{\supp}{supp}
\newcommand{\qx}[1]{\quad \text{#1 }}
\theoremstyle{plain}
\newtheorem{theorem}{Theorem}[section]
\newtheorem{lemma}[theorem]{Lemma}
\newtheorem{corollary}[theorem]{Corollary}
\newtheorem{proposition}[theorem]{Proposition}
\theoremstyle{definition}
\newtheorem{assumption}[theorem]{Assumption}
\theoremstyle{remark}
\newtheorem{remark}[theorem]{Remark}
\begin{document}
\title[Parabolic equations with space-time non-local operators]{Sobolev estimates for fractional parabolic equations with space-time non-local operators}

 \author[H. Dong]{Hongjie Dong}
 \address[H. Dong]{Division of Applied Mathematics, Brown University, 182 George Street, Providence, RI 02912, USA}

 \email{Hongjie\_Dong@brown.edu}
 \thanks{H. Dong was partially supported by the Simons Foundation, grant no. 709545, a Simons fellowship, grant no. 007638, and the NSF under agreement DMS-2055244.}

 \author[Y. Liu]{Yanze Liu}
 \address[Y. Liu]{Department of Mathematics, Brown University, 151 Thayer Street, Providence, RI 02912, USA}

 \email{Yanze\_Liu@brown.edu}


\begin{abstract}
We obtain $L_p$ estimates for {fractional parabolic} equations with space-time non-local operators
\begin{align*}
   \partial_t^\alpha u - Lu + \lambda u= f \qx{in} (0,T) \times \rr^d ,
\end{align*}
where {$\partial_t^\alpha u$ is the Caputo fractional derivative of order} $\alpha \in (0,1]$, {$T\in (0,\infty)$,} and
\begin{align*}
    Lu(t,x) := \int_{\rr^d} \bigg( u(t,x+y)-u(t,x) - y\cdot \nabla_xu(t,x)\chi^{(\sigma)}(y)\bigg)K(t,x,y)\,dy
\end{align*}
is an integro-differential operator in the spatial variables. Here we do not impose any regularity assumption on the kernel $K$ with respect to $t$ and $y$.
We also derive a weighted mixed-norm estimate for the equations with operators that are local in time, i.e., $\alpha = 1$, which extend the previous results in \cite{MP,wuha} by using a quite different method.
\end{abstract}

\maketitle

\tableofcontents

\section{Introduction}
This paper is devoted to a study of $L_p$ (and $L_{p,q}$) estimates for non-divergence form equations with space-time non-local operators of the form
\begin{align*}
   \partial_t^\alpha u - L u + \lambda u = f \qx{in} (0,T) \times \rr^d \stepcounter{equation}  \tag{\theequation}\label{nondiv}
\end{align*}
with the zero initial condition $u(0,\cdot) = 0 $, where {$T\in (0,\infty)$,} $\partial_t^\alpha u$ is the Caputo fractional derivative of order $\alpha \in (0,1]$, and $L$ is an integro-differential operator
\begin{align*}
    Lu(t,x) := \int_{\rr^d} \bigg( u(t,x+y)-u(t,x) - y\cdot \nabla_xu(t,x)\chi^{(\sigma)}(y)\bigg)K(t,x,y)\,dy, \stepcounter{equation} \tag{\theequation}\label{op}
\end{align*}
{where $\chi^{(\sigma)}(y)=1_{\sigma\in (1,2)}+1_{\sigma=1}1_{|y|\le 1}$, and $K$ is nonnegative.}
A simple example of $L$ is the fractional Laplacian $-\pf$ for $\sigma\in(0,2)$.
We refer the reader to Section \ref{2} for the precise definitions of $\partial_t^\alpha u$ and {$L$}.

The fractional parabolic equations of the form \eqref{nondiv} have applications in various fields, including physics and probability theory. For example, by viewing the fundamental solution of \eqref{nondiv} as the probability density of a peculiar self-similar stochastic process evolving in time, we can see that the equation is related to the continuous-time random walk \cite{rw}.

There are many works regarding $L_p$ ({or} $L_{p,q}$) estimates for parabolic equations with either fractional derivatives in time or integro-differential operators in space or both. Equations {with non-local time derivatives} of the form
\begin{align*}
   \partial_t^\alpha u - a^{ij}(t,x) D_{ij} u  = f(t,x),\stepcounter{equation}  \tag{\theequation}\label{fs}
\end{align*}
were studied in \cite{P1,P2,dong19,dong20,dong21,DP}. In \cite{P1}, the authors derived mixed-norm estimates for {\eqref{fs} with} $\alpha\in(0,2)$ under the assumption that $a^{ij}$ are piecewise continuous in $t$ and uniformly continuous in $x$. In \cite{P2}, a weighted mixed-norm estimate was obtained when $\alpha\in(0,2)$  and $a^{ij}{=\delta_{ij}}$. More general coefficients $a^{ij}$ were studied in \cite{dong19,dong20}, where the $L_p$ and the weighted mixed-norm estimates were obtained respectively for $\alpha \in(0,1)$ under the assumption that $a^{ij}$ have small mean oscillations with respect to the spatial variables. Quite recently, in \cite{dong21} the weighted mixed-norm estimates are obtained for $\alpha \in(1,2)$ under the assumption that $a^{ij}$ have small oscillations in $(t,x)$. Furthermore, a weighted mixed-norm regularity theory for \eqref{fs} was obtained in \cite{DP} {when the coefficients are at least uniformly continuous in both $t$ and $x$}.

Equations {with non-local operators in $x$} of the form
\begin{align*}
   \partial_t u(t,x) - {Lu(t,x)} = f(t,x) \stepcounter{equation}  \tag{\theequation}\label{fmss}
\end{align*}
with $L$ defined in \eqref{op} were studied in \cite{M92,MP,MC,wuha,dyk}. In \cite{M92}, using {the} Fourier multiplier theorem, the authors derived an $L_p$ estimate for \eqref{fmss} under the conditions that $p$ is sufficiently large, {and the operator satisfies an ellipticity condition,} and $K(t,x,y)|y|^{d+\sigma}$
is homogeneous of order zero and smooth in $y$, and its derivatives in $y$ are continuous in $x$ uniformly in $(t,y)$.
In \cite{MP}, an $L_p$ estimate for \eqref{fmss} was obtained under the conditions that $p$ is sufficiently large, and $K(t,x,y)|y|^{d+\sigma}$ is bounded from above and below and H\"older continuous in $x$ uniformly in $(t,y)$. {No regularity assumption is assumed for $K$ in $t$ and $y$.} In particular, when $K = K(t,y)$, i.e. independent of $x$, and $f$ is sufficiently smooth, using a probabilistic method, they constructed the solution for \eqref{fmss} explicitly as the expected value involving some stochastic processes, and when $K = K(t,x,y)$, they applied the frozen coefficient argument. {For time-independent equations wit $K=K(y)$, such result was obtained earlier in \cite{dong12} by using a purely analytic method. It has been an open question whether in the case when $K=K(t,x,y)$, the restriction of $p$ in \cite{MP} can be removed. This problem was resolved} in a recent paper \cite{dyk} by using a weighted estimate and an {extrapolation} argument. In \cite{wuha}, by a probabilistic representation of the solution, the authors investigated the $L_p$-maximal regularity of  \eqref{fmss} as well as more general equations with singular and non-symmetric L\'evy operators.

Sobolev type estimates for equations with space-time non-local operators similar to \eqref{nondiv} were studied in \cite{kim20}  by deriving fundamental solutions using probability density functions of some stochastic processes.
Particularly, the equation
\begin{align*}
   \partial_t^\alpha u - \phi(\Delta) u = f \stepcounter{equation}  \tag{\theequation}\label{fms}
\end{align*}
is considered, where $\alpha \in (0,1)$ and $\phi$ is a Bernstein function satisfying certain growth condition.
The authors obtained a mixed-norm estimate of \eqref{fms} based on estimates of the fundamental solution from \cite{333} together with the Calder\'on-Zygmund theorem.

In this paper, we obtain the a priori estimate and the unique solvability for \eqref{nondiv} with space-time non-local operators. Compared to \cite{kim20}, our estimates do not directly rely on the representation of the fundamental solution. However, to apply the method of continuity, we need the results in \cite{kim20} or \cite{LP}. Indeed, for $K= K(t,y)$, we only require that $K(t,y)|y|^{d+\sigma}$ is bounded from above and below {and measurable}, an assumption for which the fundamental solution is unavailable. Furthermore, when $\alpha=1$, we obtain the weighted mixed-norm estimate for \eqref{nondiv}, where the weights are taken in both spatial and time variables.
The results are further extended to the case when $K=K(t,x,y)$, under the assumption that $K$ is H\"older continuous in $x$ uniformly in $(t,y)$. See Assumptions \ref{aaa1} and \ref{aaa2} for details. We also consider equations which contain lower-order terms. These, in particular, extend the aforementioned results in \cite{MP,wuha,dyk} to the weighted mixed-norm setting by using a quite different method.
Our main theorem reads that for any $p\in (1, \infty)$ and the kernel $K$ such that $K(t,y)|y|^{d+\sigma}$ is bounded {from above and below}, if $u$ satisfies (\ref{nondiv}) with the zero initial condition, then we have
$$
 {\norm{ (1-\Delta)^{\sigma/2}u}_{L_p(0,T) \times \rr^d)}}  + \norm{ \partial_t^\alpha u}_{L_{p}((0,T) \times \rr^d)}
\le N \norm{f}_{L_{p}((0,T) \times \rr^d)},
$$
where $N$ is independent of $u$ and $f$. Furthermore, for any $f \in L_{p}((0,T) \times \rr^d)$, there exists a unique solution $u$ to (\ref{nondiv}) with the zero initial condition in the appropriate Sobolev spaces defined in Section \ref{2}. Moreover, when $\alpha=1$, the a priori estimate and the unique solvability hold for \eqref{fmss} in the appropriate weighted mixed-norm Sobolev spaces {with Muckenhoupt weights}.

For the proof of the $L_p$ estimates for \eqref{nondiv} when $\alpha\in(0,1]$, we apply a  level set argument together with Lemma \ref{a6} (``crawling of ink spots {lemma}") by adapting the argument in \cite{dong19}. More precisely, we first prove the theorem for $p=2$ by using the Fourier transform. Then, we apply a bootstrap argument: assuming the theorem holds for $p_0$ and estimating the solution by a decomposition, we show that the theorem holds for $p \in(p_0,p_1)$ for some $p_1 > p_0$ such that the increment $p_1-p_0$ is independent of $p_0$. Indeed, for $(t_0,x_0)\in(0,T]\times \rr^d$, we decompose the solution $u$ to \eqref{nondiv} into $u = w+v$ such that
\[\partial_t^\alpha w - L w + \lambda w= \zeta f \qx{in} (t_0-1,t_0)\times \rr^d\stepcounter{equation} \tag{\theequation}\label{itrr} \]
{with the zero initial condition at $t=t_0-1$,}
where $\zeta$ is a {suitable} cutoff function satisfying $\zeta = 1 $ in $Q_1(t_0,x_0)$.
Then $v$ satisfies the homogeneous equation in $Q_1(t_0,x_0)$.
We bound $w$ by the maximal function of $f$ using the global $L_{p_0}$ estimate and prove a higher integrability result for $v$. See Proposition \ref{eees}.
Compared to \cite{dong19}, our proof is more involved since we need to deal with the nonlocality in both space and time.
For the proof of the weighted mixed-norm estimates of \eqref{nondiv} when $\alpha = 1$, we derive a mean oscillation estimate of $\pf u$, where $u$ is a solution to \eqref{nondiv}{, by using an iteration argument}. To this end, for $t_0\in(0,T]$, we decompose $u = w + v$, where
$$
\partial_tw -Lw+ \lambda w =f\qx{in}(t_0-1,t_0)\times \rr^d
$$
{so that $v$ satisfies the homogeneous equation in the strip $(t_0-1,t_0)\times \rr^d$.}
Compared to the estimate of $w$ in the previous case (cf. \eqref{itrr}), {here we cannot directly apply the global $L_{p_0}$ estimate since the right-hand side is not compactly supported in $x$. Our idea is to} use a localization and iteration argument. See Proposition \ref{nonhomo}. We also establish a H\"older estimate of $v$ by using a delicate bootstrap argument: first derive a local estimate by using an estimate of the commutator (Lemma \ref{com}) and then apply the Sobolev embedding theorems (Lemmas \ref{a5} and \ref{aa7}). {For this proof, it is crucial that $v$ satisfies the homogeneous equation in the strip $(t_0-1,t_0)\times \rr^d$ instead of just in a cylinder.}
The weighted mixed-norm estimates then follow by using the Fefferman-Stein theorem for sharp functions and the Hardy-Littlewood theorem for (strong) maximal functions in weighted mixed-norm spaces. Finally, for the general cases when $K = K(t,x,y)$, we use a {boundedness} result in \cite{dyk} with a perturbation argument and an extrapolation theorem.

In the future, we plan to extend the results by considering the cases of non-zero initial data, equations in domains, and divergence form equations. Indeed, some {interesting} work have been done in those directions. For example, solutions for equations in domains were studied in \cite{BB} using the probabilistic method. Divergence form equations were also studied in \cite{MA,ME} with various assumptions on the kernel $K$.
More precisely, H\"older estimates and $L_p$ estimates were obtained for the divergence form equations in \cite{MA} and \cite{ME}, respectively. Furthermore, there are results about divergence form non-local elliptic equations in \cite{KU, S1,S2}, where more general operators were considered. For example, in \cite{S2}, the authors obtained some regularity results for operators with VMO kernels and non-linearity. {Another interesting question is whether} the weighted mixed-norm estimates {still hold} for \eqref{nondiv} when $\alpha \in (0,1)$, where the weights are taken over the time or the spatial variables. See Remarks \ref{657} and \ref{658} for more information.

The remaining part of the paper is organized as follows. In Section \ref{2}, we introduce notation, definitions, and the main results of the paper. In Section \ref{3}, we obtain the $L_p$ estimates using the level set argument. In Section \ref{4}, we first estimate the mean oscillation {of solutions} by using the decomposition mentioned above, and then prove the weighted mixed-norm estimates. In Appendix \ref{A}, we prove miscellaneous lemmas used in the main proofs.

\section{Notation and Main Results}\label{2}
We first introduce some notation used throughout the paper. For $\alpha \in (0,1)$ and $S \in \rr$, we denote
\[
I^\alpha_S u(t,x)
= \frac{1}{\Gamma(\alpha)}
\int_S^t (t - s)^{\alpha - 1} u (s,x) \, ds,\quad t\ge S,\, x\in \rr^d,
\]
and
\[
\partial_t^\alpha u(t,x)
= \frac{1}{\Gamma(1 - \alpha)}
\int_0^t (t-s)^{-\alpha} \partial_t u (s,x) \, ds,\quad t\ge 0,\, x\in \rr^d.
\]
It is easily seen that $\partial_t^\alpha u = \partial_t I_0^{1-\alpha} u$ for a sufficiently smooth $u$ with $u(0,x) = 0$.

We use $\cff(u)$ and $\hat{u}$ to denote the Fourier transform of $u$. If $u = u(t,x)$, then $\cff(u)(t,\xi)$ and $\hat{u}(t,\xi)$ denote the Fourier transform of $u$ in $x$ for a fixed time $t$.
For $p\in (0,\infty)$ and $\sigma \in(0,2)$, recall the definition of the Bessel potential space
$$H_{p}^\sigma(\rr^d) = \{ u \in L_p(\rr^d) : (1-\Delta)^{\sigma/2}u \in L_p(\rr^d)\}$$
and
$$\norm{u}_{H_{p}^\sigma(\rr^d)} := \norm{ (1-\Delta)^{\sigma/2}u}_{L_p(\rr^d)},$$
where
\begin{align*}
(1-\Delta)^{\sigma/2}u(x) = \cff^{-1}\Big[(1 + |\xi|^2)^{\sigma/2}\cff(u)(\xi)\Big](x).
\end{align*}

For $p,q\in(1,\infty)$ {and $-\infty<S<T<\infty$}, we define
$L_{p,q} \big((S,T )\times \rr^d\big)$ to be the set of all measurable functions $f$ defined on $(S,T) \times \rr^d$ satisfying
\[
\norm{f}_{L_{p,q} ((S,T ){\times \rr^d})} :=
\bigg( \int_S^T
\big( \int_{\rr^d}
| f(t,x)|^q \, dx
\big)^{p/q}
\, dt
\bigg)^{1/p}
< \infty.
\]
When $p=q$, we write $L_p \big((S,T )\times \rr^d\big) = L_{p,p}\big((S,T )\times \rr^d\big)$. Furthermore, for $\alpha \in (0,1]$ and $\sigma\in (0,2)$, we denote $ \hh_{p,q}^{\alpha, \sigma}(S,T)$ to be the collection of functions such that $u \in L_p\big((S,T);H_q^{\sigma}(\rr^d)\big)$, $\partial_t^\alpha u \in L_{p,q}\big((S,T)\times \rr^d\big)$, and
$$\norm{u}_{\hh_{p,q}^{\alpha, \sigma}(S,T)} :=\bigg( \int_S^T
\big( \norm{u(t,\cdot)}_{H_q^\sigma(\rr^d)}
\big)^{p/q}
\, dt
\bigg)^{1/p}+ \norm{ \partial_t^\alpha u}_{L_{p,q} ((S,T)\times \rr^d)}.$$
We write $u \in \hh_{p,q,0}^{\alpha, \sigma} ( S,T) := \hh_{p,q,0}^{\alpha, \sigma} \big(( S,T) \times \rr^d \big)$
if there exists a sequence of functions $\{ u_n \}$ such that  $u_n \in C^\infty ( [S,T] \times \rr^d )$ with $u_n(S,x) = 0$ vanish for large $|x|$, and
\[
\norm{u_n - u}_{\hh_{p,q}^{\alpha, \sigma} (S,T )}
\to 0  \qx{as} n \to \infty.
\]
Moreover, for a domain $\Omega \subset \rr^d$ and $ u$ defined on $(S,T)\times \Omega$, we write $u \in \hh_{p,q,0}^{\alpha, \sigma} \big(( S,T) \times \Omega \big)$ if
there exists a extension of $u$ to $(S,T)\times \rr^d$, i.e.,
\begin{align*}
  \norm{ u}_{\hh_{p,q}^{\alpha, \sigma} ((S,T )\times \Omega)} := \inf \{ \norm{\overline{u}}_{\hh_{p,q}^{\alpha, \sigma}({S,}T)} : \overline{u} \in \hh_{p,q,0}^{\alpha, \sigma}({S,}T) \qx{and }\overline{u}|_{{(S,T)\times}\Omega} = u  \} < \infty .
\end{align*}
We denote
$\hh_{p,0}^{\alpha, \sigma}( (S,T )\times \Omega ):=\hh_{p,p,0}^{\alpha, \sigma} ((S,T )\times \Omega )$ if $p=q$. Furthermore, we take $\psi(x) = 1/(1 + |x|^{d+ \sigma})$, and {denote}
\[\norm{u}_{L_p((0,T);L_1(\rr^d,\psi))} :=\norm{\psi u}_{L_p((0,T);L_1(\rr^d))}. \stepcounter{equation} \tag{\theequation}\label{itr}
\]

{For $T\in (0,\infty)$, we denote $(0,T) \times \rr^d :=\rr^d_T $ and}
we often denote $\hh_{p,q}^{\alpha, \sigma}(T) := \hh_{p,q}^{\alpha, \sigma}(0,T)$ and $\hh_{p,q,0}^{\alpha, \sigma}(T):= \hh_{p,q,0}^{\alpha, \sigma}(0,T)$.
We use the notation $u \in  \hh_{p,0,\mathrm{loc}}^{\alpha, \sigma} ( \rr^d_T )$ to indicate a function satisfying $u \in  \hh_{p,0}^{\alpha, \sigma} ( (0,T) \times B_R ) $ for all $R>0$.

For $\alpha \in (0,1)$, $\sigma\in (0,2)$, $r_1,r_2 >0$, and $(t,x)\in \rr^{d+1}$, we denote the parabolic cylinder by
\[
Q_{r_1, r_2} (t,x) = (t-r_1^{\sigma/\alpha}, t) \times B_{r_2} (x)
\quad \text{and}\quad
Q_r(t,x) = Q_{r,r}(t,x),
\]
where $B_{r_2} (x) = \{ y \in \rr^d: |y - x| < r_2 \}$.
We write $B_r$ and $Q_r$ for $B_r(0)$ and $Q_r(0,0)$. Furthermore, for $f\in L_{1,\text{loc}}$ defined on $\cdd \subset \rr^{d+1}$ and $(t,x) \in \cdd$, we {define its} maximal function and strong maximal function, respectively, by
\[
\cmm f (t,x) = \sup_{Q_r (s,y) \ni (t,x)}
\fint_{Q_r (s,y)}
|f(r,z) | \chi_\cdd \, dz \, dr
\]
and
\[
(\css \cmm f ) (t,x)= \sup_{Q_{r_1, r_2} (s,y) \ni (t,x)}
\fint_{Q_{r_1, r_2} (s,y)}
|f(r,z) | \chi_\cdd \, dz \, dr.
\]

Next, for $p \in (1, \infty)$, $k \in \{ 1,2,\ldots \}$, let $A_p (\rr^k, dx)$ be the set of all non-negative functions $w$ on $\rr^k$ such that
\[
[w]_{A_p(\rr^k)}
:= \sup_{x_0 \in \rr^k , r> 0}
\biggl(
\fint_{B_r(x_0) } w(x) \, dx
\biggr) \biggl(
\fint_{B_r(x_0) } (w(x))^{-\frac{1}{p-1}} \, dx
\biggr)^{p-1}
< \infty,
\]
where $B_r(x_0) = \{ x \in \rr^k: |x - x_0| < r \}$. Furthermore, for a constant $M_1 > 0$, we write $[w]_{p,q} \le M_1$ if $w = w_1(t) w_2(x)$ for some $w_1$ and $w_2$ satisfying
\[
w_1(t) \in A_p (\rr, dt),
\quad
w_2(x) \in A_q(\rr^d, dx),
\qx{and}
[w_1]_{A_p(\rr)},[w_2]_{A_q(\rr^k)} \leq M_1.
\]
We denote $L_{p,q,w} ( \rr^d_T )$ to be the set of all measurable functions $f$ defined on $\rr^d_T$ satisfying
\[
\norm{f}_{L_{p,q,w} ( \rr^d_T )}:=
\bigg( \int_0^T
\big( \int_{{\rr^d}}
| f(t,x)|^q w_2(x) \, dx
\big)^{p/q}
w_1(t) \, dt
\bigg)^{1/p}
< \infty.
\]
When $p=q$ and $w\equiv 1$, $L_{p,q,w} ( \rr^d_T )$ becomes the usual Lebesgue space $L_p ( \rr^d_T )$.

We write $N = N (\cdots)$ if the constant $N$ depends only on the parameters in the parentheses.

Next, we present the assumptions for the operator. In this paper, we consider equations which are non-local in both time and space:
\begin{align*}
    \partial_t^\alpha u - Lu + \lambda u = f \qx{in} \rr^d_T,
\end{align*}
where $L$ is defined in \eqref{op}.
We impose the following assumptions on the kernel $K = K(t,x,y) > 0$.
\begin{assumption}\label{aaa1}
\begin{enumerate}
    \item There exist some $0<\nu< \Lambda$ such that
    \begin{align*}
    (2-\sigma) \frac{\nu}{|y|^{d+\sigma}} \le K(t,x,y) \le(2-\sigma)  \frac{\Lambda}{|y|^{d+\sigma}} .\stepcounter{equation} \tag{\theequation}\label{ass1}
\end{align*}
\item When $\sigma = 1$,
\[
\int_{\partial{B_r}} y K(t,x,y) \, dS_r(y) = 0. \stepcounter{equation} \tag{\theequation}\label{sig1}
\]
\end{enumerate}
\end{assumption}
Note that \eqref{sig1} implies that $\chi^1$ in \eqref{op} can be replaced by $1_{y\in B_r}$ for any $r>0$.

\begin{assumption}\label{aaa2}
There exist $\beta \in (0,1)$ and a continuous increasing function $\omega:\rr^+ \to \rr^+$ such that
\[
|y|^{d+\sigma}|K(t,x_1,y) - K(t,x_1,y)|  \le \omega(|x_1-x_2|)
\]
and
\[
\int_{|y|\le 1 }\omega(|y|)|y|^{-d-\beta} \, dy < \infty \stepcounter{equation} \tag{\theequation}\label{iou}.
\]
\end{assumption}

\begin{remark}
{On one hand, i}t is easily seen that if $|y|^{d+\sigma}K(t,\cdot,y)$ is H\"older continuous in $x$ with exponent $\beta + \ep$ for any $\ep>0$  {uniformly in $(t,y)$}, then \eqref{iou} is satisfied. {On the other hand, since $\omega$ is increasing, from \eqref{iou} we see that $\omega(r)\le Nr^\beta$ for any $r\le 1/2$, which implies that $|y|^{d+\sigma}K(t,\cdot,y)$ is H\"older continuous in $x$ with exponent $\beta$ uniformly in $(t,y)$.}
\end{remark}

\begin{remark}\label{re}
{It is well known that} for $\sigma\in (0,2)$, if $K(t,x,y) = K(y) := c|y|^{-d-\sigma}$, where $$c = c(d,\sigma) = \frac{\sigma(2-\sigma)\Gamma(\frac{d+\sigma}{2})}{\pi^{d/2}2^{2-\sigma}\Gamma(2-\frac{\sigma}{2})},$$
then
\[
{Lu}= \frac{c}{2} \int_{\rr^d} \bigg( u(t,x+y)-u(t,x) - 2u(x)\bigg)|y|^{-d-\sigma}\,dy{=-(-\Delta)^{\sigma/2}u}.
\]
\end{remark}

In some cases, the lower bound of the operator is not necessary. Thus, we use $L^1$ to denote operators satisfying \eqref{sig1} {when $\sigma=1$} with the kernel
\begin{align*}
     K_1(t,x,y) = K_1(t,y) \le(2-\sigma)  \frac{\Lambda}{|y|^{d+\sigma}} \stepcounter{equation} \tag{\theequation}\label{ass}.
\end{align*}

With the assumptions {above}, we are ready to state the main theorems of this paper.
\begin{theorem}\label{main}
Let $\alpha \in (0,1]$, $\sigma\in(0,2)$, $T \in (0, \infty)$, $\lambda \ge 0$, and $p \in (1, \infty)$. Suppose {that} the kernel $K= K(t,y)$ satisfies Assumption \ref{aaa1}. Then $\partial^\alpha_t - L  $ is a continuous operator from $\hh_{p,0}^{\alpha, \sigma} (T )$ to $L_p(\rr^d_T)$. Also, for any $u \in \hh_{p,0}^{\alpha, \sigma} (T )$ satisfying
\[
\partial_t^\alpha u - Lu + \lambda u = f \qx{in} \rr^d_T, \stepcounter{equation} \tag{\theequation}\label{eqn}
\]
and any $L^1$ satisfying \eqref{sig1} {when $\sigma=1$} and \eqref{ass}, we have
\[
\norm{\partial_t^\alpha u}_{L_p(\rr^d_T)} + \norm{L^1 u}_{L_p(\rr^d_T)} + \lambda \norm{ u}_{L_p(\rr^d_T)}
\leq N \norm{f}_{L_p(\rr^d_T)} \stepcounter{equation}\tag{\theequation}\label{ess}
\]
and
\[
\norm{u}_{\hh_{p}^{\alpha, \sigma}(T)}
\leq N \min (T^\alpha, \lambda^{-1}) \norm{f}_{L_p(\rr^d_T)},\stepcounter{equation}\tag{\theequation}\label{es}
\]
where $N = N(d, \nu, \Lambda, \alpha,\sigma, p)$.
Moreover, for any $f \in L_p(\rr^d_T)$, there exists a unique solution $u \in \hh_{p,0}^{\alpha, \sigma} ( T )$ to \eqref{eqn}.
\end{theorem}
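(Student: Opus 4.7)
My plan is to establish the a priori estimate \eqref{ess}--\eqref{es} first and then upgrade it to existence by the method of continuity. I would begin with the Hilbert exponent $p=2$. Because $K=K(t,y)$ is translation-invariant in $x$, the spatial Fourier transform converts \eqref{eqn} into the scalar Volterra equation $\partial_t^\alpha \hat u(t,\xi)+(m(t,\xi)+\lambda)\hat u(t,\xi)=\hat f(t,\xi)$, whose symbol $m(t,\xi)=\int_{\rr^d}(1-\cos(y\cdot\xi))K(t,y)\,dy$ (with the cancellation \eqref{sig1} handling the principal value when $\sigma=1$) is comparable to $|\xi|^\sigma$ uniformly in $t$ thanks to \eqref{ass1}. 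Multiplying by $\overline{\hat u}$ and integrating, or using a Mittag--Leffler representation, together with Plancherel, yields the $L_2$ bound on $\partial_t^\alpha u$, $m(t,D)u$, and $\lambda u$. Since the multiplier of any admissible $L^1$ is dominated pointwise by $m(t,\xi)$, this gives the $L_2$ version of \eqref{ess}.

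I would then upgrade the $L_2$ estimate to every $p\in(1,\infty)$ by a level-set bootstrap. Assume the theorem has been proved for some $p_0\ge 2$; the goal is to prove it for every $p\in(p_0,p_1)$ with $p_1-p_0$ depending only on the structural constants, so that iterating covers $[2,\infty)$ while duality plus interpolation covers $(1,2)$. Fix $(t_0,x_0)\in\rr^d_T$ and a cutoff $\zeta$ equal to $1$ on $Q_1(t_0,x_0)$. I decompose $u=w+v$ on the strip $(t_0-1,t_0)\times\rr^d$, where $w$ solves \eqref{itrr} with zero initial data at $t=t_0-1$ and $v:=u-w$ satisfies the homogeneous equation on $Q_1(t_0,x_0)$. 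The inductive $L_{p_0}$ estimate applied to $w$ over the whole strip controls $\|\partial_t^\alpha w\|_{L_{p_0}}$ and $\|L^1 w\|_{L_{p_0}}$ by $\|\zeta f\|_{L_{p_0}}$, which is bounded in turn by the maximal function $(\cmm|f|^{p_0})^{1/p_0}(t_0,x_0)$ on the base cylinder.

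The heart of the argument, and the step I expect to be the main obstacle, is a self-improving (higher integrability) estimate for the homogeneous piece $v$ of the form
\[
\bigg(\fint_{Q_{1/2}(t_0,x_0)}|L^1 v|^{p_1}\bigg)^{1/p_1}\le N\bigg(\fint_{Q_1(t_0,x_0)}|L^1 v|^{p_0}\bigg)^{1/p_0}+(\text{nonlocal tail})
\]
with a fixed gap $p_1-p_0>0$; this is Proposition \ref{eees}. The difficulty is nonlocality in both variables: interior regularity of $v$ cannot be obtained by pure cutoffs, since any cutoff generates far-field source terms both in space (through $L$) and in time (through $\partial_t^\alpha$), and these tails must be controlled by $\|u\|_{L_p((0,T);L_1(\rr^d,\psi))}$ in the norm \eqref{itr} and then absorbed. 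My strategy would be a Caccioppoli-type iteration combining an interior Hölder/regularity estimate for $v$ with a careful tail-splitting argument that reinvokes the $L_{p_0}$ estimate on annuli, followed by Sobolev embedding to convert gained regularity into gained integrability.

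Given the good pointwise bound on $L^1 w$ and the higher integrability of $L^1 v$, a classical good-$\lambda$ argument yields, for a large $\kappa$ and small $\varepsilon$,
\[
|\{|L^1 u|>\kappa s\}\cap Q_r|\le \varepsilon|\{|L^1 u|>s\}\cap Q_{2r}|+|\{(\cmm|f|^{p_0})^{1/p_0}>c\kappa s\}\cap Q_{2r}|,
\]
and the crawling-of-ink-spots Lemma \ref{a6} converts this into $\|L^1 u\|_{L_p}\le N\|(\cmm|f|^{p_0})^{1/p_0}\|_{L_p}\le N\|f\|_{L_p}$ by Hardy--Littlewood, provided $p/p_0>1$. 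The bounds on $\partial_t^\alpha u$ and $\lambda u$ follow from the same scheme or directly from the equation, yielding \eqref{ess}; the factor $\min(T^\alpha,\lambda^{-1})$ in \eqref{es} comes from splitting into the cases of large and small $\lambda T^\alpha$ and using a scaling/integration argument. Finally, existence for arbitrary $f\in L_p(\rr^d_T)$ follows by the method of continuity, connecting $\partial_t^\alpha-L+\lambda$ to the model operator $\partial_t^\alpha-(-\Delta)^{\sigma/2}+\lambda$ through a one-parameter family of kernels interpolating between $K$ and $c|y|^{-d-\sigma}$ and satisfying Assumption \ref{aaa1} uniformly; solvability at one endpoint is guaranteed by \cite{kim20,LP}, and the uniform a priori estimate \eqref{ess} keeps the operators invertible along the path.
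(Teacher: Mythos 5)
Your overall skeleton matches the paper's proof: Fourier transform for $p=2$, a $w+v$ decomposition with a cutoff $\zeta$ in time--space, higher integrability for $v$ via Sobolev embedding, a good-$\lambda$/crawling-of-ink-spots iteration with a $p_0$-independent gap $p_1-p_0$, duality for $p\in(1,2)$, and the method of continuity anchored at $-(-\Delta)^{\sigma/2}$ using \cite{kim20}. However, your $p=2$ step has a genuine gap. Multiplying the transformed equation by $\overline{\hat u}$ and integrating only produces the coercivity $\int(m(t,\xi)+\lambda)|\hat u|^2\gtrsim\|(-\Delta)^{\sigma/4}u\|_{L_2}^2+\lambda\|u\|_{L_2}^2$, i.e.\ control of a \emph{half} derivative, not the full $\|(-\Delta)^{\sigma/2}u\|_{L_2}$ that \eqref{ess} requires. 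The natural fix, multiplying by $\overline{(m(t,\xi)+\lambda)\hat u}$, fails because $m$ is time-dependent: one cannot then verify $\int_0^T m(t,\xi)\,\partial_t^\alpha|\hat u|^2(t,\xi)\,dt\ge 0$, since the Caputo-derivative positivity ($\int_0^T\partial_t^\alpha g\,dt=\text{const}\cdot\int_0^T(T-s)^{-\alpha}g(s)\,ds\ge 0$ for $g\ge 0$, $g(0)=0$) needs the $\xi$-dependent weight to be independent of $t$. The paper resolves exactly this tension by testing against the time-independent multiplier $(-\Delta)^{\sigma/2}u$ (i.e.\ $|\xi|^\sigma\overline{\hat u}$ up to a constant in Fourier), so that the first term is nonnegative, the second term is coercive by \eqref{ass1}, and a separate pairing against $\lambda u$ gives \eqref{eq5.01}. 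Your alternative via a Mittag--Leffler representation likewise does not apply here: with $K=K(t,y)$ the symbol $m(t,\xi)$ is $t$-dependent, so there is no explicit Mittag--Leffler solution formula. Once this multiplier is corrected, the rest of your outline aligns with Proposition~\ref{eees}, Lemma~\ref{3.3}, and Lemma~\ref{a6}; in particular the "Caccioppoli-type iteration" you gesture at is realized in the paper as Lemma~\ref{local} (with the commutator bound Lemma~\ref{com} and a geometric-series absorption in the cutoff index) combined with the embeddings of Lemmas~\ref{a5} and~\ref{aa7}, not as a De~Giorgi-type scheme on $v$ directly.
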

{\begin{remark}\label{2.3}
Indeed, \eqref{es} follows from \eqref{ess} upon setting $L^1=(-\Delta)^{\sigma/2}$, $L^1=L$, and using \eqref{eqn} and Lemma \ref{a2}. Furthermore,
by \eqref{ess} with $\lambda = 0$, for any $u \in \hh_{p,0}^{\alpha, \sigma} (T )$ if
\[
\partial_t^\alpha u + \pf u  = g \qx{in} \rr^d_T,
\]
then
\[
\norm{L^1 u}_{L_p(\rr^d_T)}
\leq N \norm{g}_{L_p(\rr^d_T)}. \stepcounter{equation}\tag{\theequation}\label{r}
\]
For any $v \in H_{p}^\sigma(\rr^d)$, by first taking $T>0$ and {a nonzero function} $\eta \in C_0^\infty(\rr)$ with $\eta(0) = 0$, then applying \eqref{r} to $u(t,x):= \eta(t/T)v(x)$, and finally sending $T\to \infty$,
we conclude that
\[
\|L^1v \|_{L_p(\rr^d)} \leq N \| \pf v \|_{L_p(\rr^d)}.\stepcounter{equation}\tag{\theequation}\label{hj}
\]
Thus, for $u \in \hh_{p,0}^{\alpha, \sigma} (T )$ and $t\in(0,T)$
\[
\|L^1u(t,\cdot) \|_{L_p(\rr^d)} \leq N \| \pf u (t,\cdot) \|_{L_p(\rr^d)},
\]
which implies
\[
\|L^1u \|_{L_p(\rr^d_T)} \leq N \| \pf u \|_{L_p(\rr^d_T)} \stepcounter{equation}\tag{\theequation}\label{up}
\]
and the continuity of the operator $\partial^\alpha_t - L $.
We refer the reader to \cite{dong12} for a different proof of \eqref{hj}.
\end{remark}}

Next, we have the following results when $ w = w_1(t)$, $K= K(t,x,y)$,
and
\[
|b |=\big|\big(b^1(t,x),\ldots,b^d(t,x)\big)\big|\le M, \quad  |c| = |c(t,x)| \le M. \stepcounter{equation} \tag{\theequation}\label{up77}
\]

\begin{corollary}\label{newco}
Let $\beta\in(0,1)$, $\alpha \in (0,1]$, $\sigma\in(0,2)$, $T \in (0, \infty)$, and $p \in (1, \infty)$. Suppose that the kernel $K= K(t,x,y)$ satisfies Assumptions \ref{aaa1} and \ref{aaa2}. Then $\partial^\alpha_t - L  $ is a continuous operator from $\hh_{p,0}^{\alpha, \sigma} (T )$ to $L_p(\rr^d_T)$.
\begin{enumerate}
    \item There exists $\lambda_0 = \lambda_0(d, \nu, \Lambda, \alpha,\sigma, p,M,\beta,\omega)\ge 1$ such that for any $\lambda \ge \lambda_0$ and $u\in \hh_{p,0}^{\alpha, \sigma} (T )$ satisfying
\[
\partial_t^\alpha u - Lu +  \lambda u= f \qx{in} \rr^d_T,
\]
we have
\[
\norm{u}_{\hh_{p}^{\alpha, \sigma}(T)}
\leq N \norm{f}_{L_p(\rr^d_T)},
\]
where $N = N(d, \nu, \Lambda, \alpha,\sigma, p,M,\beta,\omega)$ is independent of $T$.
\item
Also, for any $u \in \hh_{p,0}^{\alpha, \sigma} (T )$ satisfying
\[
\partial_t^\alpha u - Lu  + b^i D_iu 1_{\sigma > 1} + cu= f \qx{in} \rr^d_T,  \stepcounter{equation}\tag{\theequation}\label{up5}
\]
we have
\[
\norm{u}_{\hh_{p}^{\alpha, \sigma}(T)}
\leq N \norm{f}_{L_p(\rr^d_T)},
\stepcounter{equation}\tag{\theequation}\label{up6}
\]
where $N = N(d, \nu, \Lambda, \alpha,\sigma, p,T,M,\beta,\omega)$.
Moreover, for any $f \in L_p(\rr^d_T)$, there exists a unique solution $u \in \hh_{p,0}^{\alpha, \sigma} ( T )$ to \eqref{up5}.
Furthermore, when $\sigma = 1$, if $\norm{b}_{\infty}$ is sufficiently small, then the a priori estimate and the unique solvability hold for
\[
\partial_t^{\alpha} u - Lu  + b^i D_iu  + cu= f \qx{in} \rr^d_T.\stepcounter{equation}\tag{\theequation}\label{up66}
\]
When $\alpha=1$ and $\sigma = 1$, if $b$ is uniformly continuous, then the a priori estimate and the unique solvability also hold for \eqref{up66}.
\end{enumerate}
\end{corollary}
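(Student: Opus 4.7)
The strategy is to reduce to Theorem \ref{main} by a standard freezing-coefficient plus perturbation argument, with Assumption \ref{aaa2} controlling the $x$-oscillation of the kernel. Continuity of $\partial_t^\alpha-L:\hh^{\alpha,\sigma}_{p,0}(T)\to L_p(\rr^d_T)$ is immediate from the upper bound on $K$: it gives the pointwise inequality $|Lu(t,x)|\le N\,L^1|u|(t,x)$ for a universal $L^1$ satisfying \eqref{ass}, and then \eqref{up} yields $\|Lu\|_{L_p(\rr^d_T)}\le N\|u\|_{\hh^{\alpha,\sigma}_p(T)}$.

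For part (1), fix a small scale $r>0$ and a smooth partition of unity $\{\zeta_k\}$ on $\rr^d$ with supports of diameter at most $r$ and the standard derivative bounds. Pick $x_k\in\supp\zeta_k$ and let $L_k$ be the operator with frozen kernel $K_k(t,y):=K(t,x_k,y)$, which still satisfies Assumption \ref{aaa1} with the same $\nu,\Lambda$. A direct computation shows $u_k:=\zeta_k u$ solves
\[
\partial_t^\alpha u_k - L_k u_k + \lambda u_k = \zeta_k f + \zeta_k(L-L_k)u - [L_k,\zeta_k]u,
\]
so \eqref{ess} applied to $u_k$ combined with the pointwise bound $|\zeta_k(L-L_k)u(t,x)|\le\omega(r)\,L^1|u|(t,x)$ (from Assumption \ref{aaa2}) lets the perturbation term be absorbed after summation in $k$, provided $N\omega(r)\le 1/2$. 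The commutator $[L_k,\zeta_k]u$ is of strictly lower order than $L$: splitting the $y$-integral into $|y|\le r$ and $|y|>r$, using a second-order Taylor expansion of $\zeta_k$ in the small-$y$ region (where $|y|^{2-d-\sigma}$ is integrable since $\sigma<2$), and the boundedness of $\zeta_k,\nabla\zeta_k$ for large $y$, together with interpolation between $L_p$ and $H^\sigma_p$, yields $\|[L_k,\zeta_k]u\|_{L_p}\le\epsilon\|u\|_{\hh^{\alpha,\sigma}_p}+N_\epsilon\|u\|_{L_p}$. Summing over $k$ (using bounded overlap), picking $\epsilon$ small, and then choosing $\lambda_0$ large enough that the residual $N_\epsilon\|u\|_{L_p}$ is swallowed by $\lambda\|u\|_{L_p}$ on the left-hand side of \eqref{ess}, yields the estimate with $N$ independent of $T$.

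For part (2), rewrite \eqref{up5} as $\partial_t^\alpha u-Lu+\lambda_0u=f+\lambda_0u-b^iD_iu\cdot1_{\sigma>1}-cu$ and apply part (1). When $\sigma>1$, the drift is strictly lower order, so the interpolation $\|Du\|_{L_p}\le\epsilon\|(-\Delta)^{\sigma/2}u\|_{L_p}+N_\epsilon\|u\|_{L_p}$ combined with \eqref{hj} absorbs it; $cu$ is handled by $\|cu\|_{L_p}\le M\|u\|_{L_p}$. This yields $\|u\|_{\hh^{\alpha,\sigma}_p(T)}\le N\|f\|_{L_p}+N\|u\|_{L_p(\rr^d_T)}$, after which a standard (fractional) Gronwall-type iteration in $t$ closes \eqref{up6} with a $T$-dependent constant. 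Unique solvability follows via the method of continuity along the homotopy $K_\tau=(1-\tau)c|y|^{-d-\sigma}+\tau K$ (with $b,c$ also deformed to $0$) that lands in Theorem \ref{main}. When $\sigma=1$ the drift has the same order as $L$, but $\|b^iD_iu\|_{L_p}\le\|b\|_\infty\|Du\|_{L_p}\le N\|b\|_\infty\|u\|_{\hh^{\alpha,1}_p}$, so smallness of $\|b\|_\infty$ permits absorption; when in addition $\alpha=1$ and $b$ is uniformly continuous, a second freezing argument on $b$ on small cylinders reduces to the small-$\|b\|_\infty$ situation (with the frozen constant drift $b_0$ either absorbed along with $L$ in the constant-coefficient analogue of Theorem \ref{main}, or removed by the change of variables $v(t,x):=u(t,x-tb_0)$).

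The main obstacle is the commutator estimate for $[L_k,\zeta_k]u$, which is the genuinely non-local step. Its explicit form, containing integrals such as $\int(\zeta_k(x+y)-\zeta_k(x)-y\cdot\nabla\zeta_k(x)\chi^{(\sigma)}(y))u(t,x+y)K_k(t,y)\,dy$, must be estimated by $\|u\|_{H^{\sigma-\delta}_p}$ for some $\delta>0$ to enable the interpolation; this requires a careful $y$-splitting plus Taylor expansion, but no new ingredients beyond what is used elsewhere. A secondary delicate point is verifying genuine $T$-independence in part (1): this relies on the $\lambda^{-1}$ scaling in \eqref{es}, equivalently the $\lambda\|u\|_{L_p}$ term in \eqref{ess}, being strong enough to absorb the interpolation remainders uniformly in $T$.
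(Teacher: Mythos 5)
Your proposal follows the paper's approach — frozen coefficients, partition of unity, commutator estimates, and absorption via a large $\lambda_0$ — so the overall structure is correct. Two points of inaccuracy deserve attention. First, the pointwise bound $|Lu(t,x)|\le N\,L^1|u|(t,x)$ (and similarly $|\zeta_k(L-L_k)u|\le\omega(r)\,L^1|u|$) is not literally true: $Lu$ involves second differences $u(x+y)-u(x)-y\cdot\nabla u(x)\chi^{(\sigma)}(y)$, and for $\sigma\ge 1$ those are not controlled pointwise by $|u|$ at separated points. What is actually available, and what both the continuity claim and the perturbation step rely on, is the $L_p$-boundedness result for non-local operators with merely measurable, $x$-dependent kernels, which the paper explicitly imports from \cite[Corollary 2.5 and Lemma 5.1]{dyk}; this is a genuinely non-trivial input that your plan silently assumes. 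Second, the paper uses a continuous partition $\{\zeta_z\}_{z\in\rr^d}$ with $\|\zeta\|_{L_p}=1$, exploiting the identity $\int_{\rr^d}|\zeta_z(x)g(x)|^p\,dz=|g(x)|^p$, which cleanly recovers the global norm without overlap arithmetic; your discrete partition with bounded overlap requires an extra lower bound of the form $\sum_k\zeta_k^p\ge c>0$ to reassemble the $L_p$ norm, a minor but necessary verification. Finally, your handling of the $\alpha=1$, $\sigma=1$, $b$ uniformly continuous case via the shift $u(t,x-tb_0)$ with constant $b_0$ is slightly weaker than the paper's $B(t)=\int_0^t b(s,0)\,ds$, which accommodates a merely measurable dependence on $t$ after freezing only in $x$; both close the argument, but the paper's version makes explicit that no time-regularity of $b$ is being used.
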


In the case of $\alpha = 1$, i.e., the operator is local in time, we have the following results regarding the weighted mixed-norm.
\begin{theorem}\label{main2}
Let $\sigma\in(0,2)$, $T \in (0, \infty)$, $\lambda \ge 0$, $p,q \in (1, \infty)$, $M_1 \in [1, \infty)$, and $[w]_{p,q} \le M_1$. Suppose the kernel $K= K(t,y)$ satisfies Assumption \ref{aaa1}. Then $\partial_t - L $ is a continuous operator from $\hh_{p,q,w,0}^{1,\sigma} ( T )$ to $L_{p,q,w}(\rr^d_T)$. Also, for any $u \in \hh_{p,q,w,0}^{1,\sigma} ( T )$ satisfying
\[
\partial_t u - Lu + \lambda u = f \qx{in} \rr^d_T
\stepcounter{equation} \tag{\theequation}\label{eqn6}
\]
and any $L^1$ satisfying \eqref{sig1} {when $\sigma=1$} and \eqref{ass}, we have
\[
\norm{\partial_t u}_{L_{p,q,w}(\rr^d_T)}+ \norm{L^1 u}_{L_{p,q,w}(\rr^d_T)}+ \lambda \norm{ u}_{L_{p,q,w}(\rr^d_T)}
\leq N \norm{f}_{L_{p,q,w}(\rr^d_T)}
\stepcounter{equation}\tag{\theequation}\label{lo1}
\]
and
\[
\norm{u}_{\hh_{p,q,w}^{1,\sigma} ( T )}
\leq N \min (T, \lambda^{-1}) \norm{f}_{L_{p,q,w}(\rr^d_T)},
\stepcounter{equation}\tag{\theequation}\label{uo1}
\]
where $N = N(d, \nu, \Lambda, \alpha,\sigma, p, q,M_1)$.
Moreover, for any $f \in L_{p,q,w} ( T )$, there exists a unique solution $u \in \hh_{p,q,w,0}^{1,\sigma} ( T )$ to \eqref{eqn6}.
\end{theorem}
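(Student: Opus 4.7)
The plan is to establish the weighted mixed-norm a priori estimate via a mean oscillation argument for $\pf u$ and then invoke the Fefferman--Stein theorem for sharp functions together with the Hardy--Littlewood theorem for the strong maximal function $\css \cmm$; the latter is bounded on $L_{p,q,w}(\rr^d_T)$ precisely because the weight factors as $w = w_1(t) w_2(x)$ with $w_1 \in A_p(\rr, dt)$ and $w_2 \in A_q(\rr^d, dx)$. The continuity of $\partial_t - L$ from $\hh_{p,q,w,0}^{1,\sigma}(T)$ into $L_{p,q,w}(\rr^d_T)$ reduces, exactly as in Remark \ref{2.3}, to the pointwise-in-$t$ inequality $\norm{L u(t,\cdot)}_{L_q(\rr^d)} \leq N \norm{\pf u(t,\cdot)}_{L_q(\rr^d)}$, which is then integrated against $w_1(t)$.

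For the mean oscillation estimate, fix $(t_0, x_0) \in \rr^d_T$ and small $r>0$, and split $u = w + v$ where $w$ solves
\begin{equation*}
\partial_t w - Lw + \lambda w = f \qx{in} (t_0 - 1, t_0) \times \rr^d
\end{equation*}
with zero initial data at $t = t_0 - 1$, so that $v = u - w$ satisfies the homogeneous equation on the full strip $(t_0 - 1, t_0) \times \rr^d$. For $w$, the right-hand side is not compactly supported in $x$, so the global unweighted $L_{p_0}$ estimate from Theorem \ref{main} cannot be applied directly; instead, as announced in the introduction, a localization-and-iteration argument (Proposition \ref{nonhomo}) bounds $\pf w$ in $L_{p_0}$ by $\css \cmm |f|^{p_0}$. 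For $v$, the crucial point is that the homogeneous equation holds on the full strip rather than in a bounded cylinder, which lets us couple the commutator estimate (Lemma \ref{com}) with the Sobolev embeddings (Lemmas \ref{a5} and \ref{aa7}) in a delicate bootstrap to produce a H\"older estimate of $\pf v$ on $Q_r(t_0, x_0)$. Combining the two pieces yields
\begin{equation*}
\fint_{Q_r(t_0,x_0)} \bigl| \pf u - (\pf u)_{Q_r(t_0,x_0)} \bigr|\,dx\,dt \leq N r^{\gamma} \bigl( \css \cmm |\pf u|^{p_0}(t_0,x_0) \bigr)^{1/p_0} + N \bigl( \css \cmm |f|^{p_0}(t_0,x_0) \bigr)^{1/p_0}
\end{equation*}
for some small $\gamma > 0$ and a fixed $p_0 > 1$ taken sufficiently close to $1$ so that $w_1 \in A_{p/p_0}$ and $w_2 \in A_{q/p_0}$.

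Feeding this inequality into the weighted Fefferman--Stein sharp function theorem and absorbing the $\pf u$ term on the right (legitimate after rescaling, or equivalently after first taking $\lambda$ large and then treating arbitrary $\lambda\ge 0$ by a perturbation of the equation), we obtain $\norm{\pf u}_{L_{p,q,w}} \leq N \norm{f}_{L_{p,q,w}}$. Combined with the pointwise-in-$t$ bound $\norm{L^1 u}_{L_q} \leq N \norm{\pf u}_{L_q}$ from Remark \ref{2.3} and with the equation itself to control $\partial_t u$ and $\lambda u$, this gives \eqref{lo1}; then \eqref{uo1} follows by integrating $\partial_t u$ in time from the zero initial condition. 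Uniqueness is immediate from \eqref{lo1}, and existence follows via the method of continuity connecting $L$ to the reference operator $-\pf$, for which weighted solvability is obtained by the same mean oscillation scheme starting from the unweighted result of Theorem \ref{main}. The principal obstacle is the H\"older estimate for $\pf v$: since $L$ is nonlocal and $v$ has no compact support, classical Schauder-type bootstraps cannot be applied verbatim, and it is precisely the fact that $v$ solves the homogeneous equation on the full strip, rather than in a bounded cylinder, which allows the commutator-plus-embedding argument to close and control the long-range tails.
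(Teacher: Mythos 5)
Your high-level architecture matches the paper's: decompose $u = w + v$ on a unit strip with $w$ solving the inhomogeneous equation from zero initial data and $v$ homogeneous on the full strip; estimate $w$ via a localization-and-iteration argument (Proposition \ref{nonhomo}) and $v$ via a bootstrap using the commutator estimate and Sobolev embeddings (Proposition \ref{ho}); assemble a mean oscillation bound (Proposition \ref{4.5}); finish with the weighted Fefferman--Stein sharp function theorem and the weighted strong maximal function theorem, choosing $p_0 > 1$ small enough that $w_1 \in A_{p/p_0}$ and $w_2 \in A_{q/p_0}$. Two concrete problems, though.

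First, the absorption mechanism is wrong. You write the mean oscillation bound with a prefactor $N r^\gamma$ in front of $(\css\cmm |\pf u|^{p_0})^{1/p_0}$ and then propose to absorb this term ``after rescaling, or equivalently after first taking $\lambda$ large.'' Neither works: the Fefferman--Stein theorem requires a bound on the full sharp function, i.e.\ a uniform estimate over all scales $r$, so an $r$-dependent smallness factor cannot be absorbed. The correct form, which is what Proposition \ref{4.5} actually proves, has a prefactor $N\kappa^\tau$ where $\kappa \in (0,1/4)$ is a \emph{fixed ratio} between the small cylinder $Q_{\kappa r}$ (where the oscillation is computed) and the large cylinder $Q_r$ (from which the Hölder estimate for $v$ is drawn). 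Because $\kappa$ is scale-independent it can be fixed once so that $N\kappa^\tau < 1/2$, and then the sharp function of $L^1 u$ (and $\lambda u$) is controlled uniformly. This is the essential two-scale structure, and your sketch does not capture it. The ``$\lambda$ large'' fallback is also not what the theorem claims: \eqref{lo1} holds for every $\lambda \ge 0$, with $N$ independent of $\lambda$, and that workaround is used in the paper only for the perturbative Corollary \ref{newco1}, not here.

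Second, your existence argument is incomplete. You say existence ``follows via the method of continuity connecting $L$ to $-\pf$, for which weighted solvability is obtained by the same mean oscillation scheme starting from the unweighted result of Theorem \ref{main}.'' The mean oscillation scheme only yields an a priori estimate; it does not produce a candidate solution in the weighted space, and the unweighted solution from Theorem \ref{main} is not a priori in $\hh_{p,q,w,0}^{1,\sigma}(T)$. The paper closes this gap by constructing the model solution explicitly as a space-time convolution $u = \zeta * f$ with $\hat\zeta(t,\xi) = e^{-t|\xi|^\sigma}$, proving the weighted $p=q$ estimate directly via the operator-valued Fourier multiplier theorem of \cite{LP}, and then passing to general $(p,q)$ by extrapolation (\cite{Ap}, \cite{extrapolation}). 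Without some such construction the base case of the method of continuity is missing.
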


With bounded $b$ and $c$ satisfying \eqref{up77}, we have the following result.
\begin{corollary}\label{newco1}
Let $\beta\in(0,1)$, $\sigma\in(0,2)$, $T \in (0, \infty)$,  $p,q \in (1, \infty)$,  $M_1 \in [1, \infty)$, and $[w]_{p,q} \le M_1$. Suppose {that} the kernel $K= K(t,x,y)$ satisfies Assumptions \ref{aaa1} and \ref{aaa2}. Then $\partial^\alpha_t - L $ is a continuous operator from $\hh_{p,q,w,0}^{1, \sigma} (T )$ to $L_{p,q,w}(\rr^d_T)$.
\begin{enumerate}
    \item There exists $\lambda_0 = \lambda_0(d, \nu, \Lambda, \sigma, p,M_1,M,\beta,\omega)\ge 1$ such that for any $\lambda \ge \lambda_0$ and $u\in \hh_{p,0}^{\alpha, \sigma} (T )$ satisfying
\[
\partial_t u - Lu +  \lambda u= f \qx{in} \rr^d_T,
\]
we have
\[
\norm{u}_{\hh_{p,q,w}^{1, \sigma}(T)}
\leq N \norm{f}_{L_{p,q,w}(\rr^d_T)}, \stepcounter{equation}\tag{\theequation}\label{89oii}
\]
where $N =N(d, \nu, \Lambda, \sigma, p,M_1,M,\beta,\omega)$ is independent of $T$.
\item Also, for any $u \in \hh_{p,q,w,0}^{1, \sigma} (T )$ satisfying
\[
\partial_t u - Lu  + b^i D_iu  1_{\sigma >1} + cu= f \qx{in} \rr^d_T,  \stepcounter{equation}\tag{\theequation}\label{upp5}
\]
we have
\[
\norm{u}_{\hh_{p,q,w}^{1, \sigma}(T)}
\leq N \norm{f}_{L_{p,q,w}(\rr^d_T)},\stepcounter{equation}\tag{\theequation}\label{89oi}
\]
where $N = N(d, \nu, \Lambda, \sigma, p,T,M_1,M,\beta,\omega)$.
Moreover, for any $f \in L_{p,q,w} ( T )$, there exists a unique solution $u \in \hh_{p,q,w,0}^{1, \sigma} ( T )$ to \eqref{upp5}. {Furthermore, when $\sigma = 1$, if $\norm{b}_{\infty}$ is sufficiently small or $b$ is uniformly continuous, then the a priori estimate and the unique solvability hold for
\[
\partial_t u - Lu  + b^i D_iu  + cu= f \qx{in} \rr^d_T.
\]}
\end{enumerate}
\end{corollary}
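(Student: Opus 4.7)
The plan is to prove Corollary \ref{newco1} by a freezing-of-coefficients argument based on Theorem \ref{main2}, which already gives the weighted mixed-norm estimate when the kernel depends only on $(t,y)$. For part (i), fix a large $\lambda$ and a radius $r_0>0$ to be determined by $\omega$, choose a smooth partition of unity $\{\zeta_k\}$ subordinated to balls $\{B_{r_0}(x_k)\}$ with uniformly finite overlap, and denote by $L_k$ the operator with frozen kernel $K(t,x_k,y)$. The localized function $u_k:=u\zeta_k$ then satisfies
\begin{align*}
\partial_t u_k - L_k u_k + \lambda u_k = \zeta_k f + (L-L_k)u_k + \bigl[L,\zeta_k\bigr]u,
\end{align*}
where $\bigl[L,\zeta_k\bigr]u$ collects the commutator terms. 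Theorem \ref{main2}, applicable since $K(t,x_k,y)$ satisfies Assumption \ref{aaa1}, provides an $\hh_{p,q,w}^{1,\sigma}(T)$-bound on $u_k$ in terms of the $L_{p,q,w}$-norm of the right-hand side. To control $(L-L_k)u_k$, I split the integral at $|y|=r_0$: on $\{|y|\le r_0\}$ I use Assumption \ref{aaa2} together with \eqref{iou} to obtain a bound by $\omega(r_0)$ times an $H_p^\sigma$-type seminorm of $u_k$, and on $\{|y|\ge r_0\}$ I use the boundedness in Assumption \ref{aaa1} together with the support of $u_k$ to reduce to a bounded lower-order operator. The commutator terms are treated similarly via Lemma \ref{com}. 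Summing over $k$ using the finite-overlap property, choosing $r_0$ small, and taking $\lambda_0$ sufficiently large absorbs all perturbative terms and yields \eqref{89oii} with $N$ independent of $T$.

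For part (ii) with fixed $T$, I first remove the restriction $\lambda\ge\lambda_0$ by the substitution $v(t,x):=e^{-\lambda_0 t}u(t,x)$, which converts \eqref{upp5} into an equation for $v$ of the form $\partial_t v - Lv + b^i D_iv 1_{\sigma>1}+(c+\lambda_0)v = e^{-\lambda_0 t}f$, so that applying part (i) with $c$ replaced by $c+\lambda_0$ and $\lambda=0$ yields \eqref{89oi} with a $T$-dependent constant through a factor of $e^{\lambda_0 T}$. The potential $cu$ with $|c|\le M$ is absorbed directly into $\lambda u$. For the drift $b^iD_iu$ with $\sigma>1$, the interpolation inequality
\begin{align*}
\norm{D u}_{L_{p,q,w}(\rr^d_T)} \le \ep\,\norm{\pf u}_{L_{p,q,w}(\rr^d_T)} + N(\ep)\norm{u}_{L_{p,q,w}(\rr^d_T)},
\end{align*}
which holds since $\sigma>1$, allows the drift to be absorbed into the left-hand side for small $\ep$ and correspondingly large $\lambda$. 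Unique solvability then follows by the method of continuity connecting \eqref{upp5} to the base case $\partial_t u+\pf u=f$ (cf.\ Remark \ref{re}), for which solvability is provided by Theorem \ref{main2}.

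For $\sigma=1$ the interpolation above is borderline and the drift requires extra care. When $\norm{b}_\infty$ is sufficiently small, the weighted mixed-norm analog of \eqref{hj}, obtained from Theorem \ref{main2} by the same argument as in Remark \ref{2.3}, combined with the $L_{p,q,w}$-boundedness of the Riesz transforms, gives $\norm{D u}_{L_{p,q,w}}\le N\norm{\pf u}_{L_{p,q,w}}$, so smallness of $\norm{b}_\infty$ allows direct absorption of $b^iD_iu$ into the left-hand side. When $b$ is uniformly continuous, I freeze $b$ in $x$ at each $x_k$ in parallel with the freezing of $K$: on $B_{r_0}(x_k)$, the error term $[b(t,x)-b(t,x_k)]\cdot\nabla u$ is perturbative by the modulus of continuity of $b$, while the constant-in-$x$ drift $b(t,x_k)\cdot\nabla$ can be merged with $L_k$ to form a Lévy-type operator whose symbol still satisfies an ellipticity bound compatible with Theorem \ref{main2}. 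The main obstacle throughout is the careful handling of the perturbation $(L-L_k)u_k$ and the commutators; assumption \eqref{iou} is the crucial technical input that renders all of these error terms perturbative as $r_0\to 0$, and the extrapolation content of Theorem \ref{main2} is what allows the argument to be carried out uniformly in the full range of weights $[w]_{p,q}\le M_1$.
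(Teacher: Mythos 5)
Your overall strategy---freeze the kernel in $x$, localize via a partition of unity, absorb perturbations by choosing $r_0$ small and $\lambda_0$ large, treat the drift by interpolation (or by change of variables for $\sigma=1$), and invoke Theorem \ref{main2} as the constant-coefficient backbone---matches the paper's. The essential ingredient that Assumption \ref{aaa2} makes $(L-L_k)$ perturbative is also correctly identified. However, there are two places where the plan as written would not go through.

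The main gap is in the ``summing over $k$'' step for the discrete partition of unity with $p\ne q$. After applying Theorem \ref{main2} to each $u_k=\zeta_k u$ you obtain bounds on $\norm{\pf u_k}_{L_{p,q,w}}$, but $\pf u_k$ is not compactly supported, so finite overlap of the $\zeta_k$ does not give an almost-orthogonality estimate of the form $\sum_k\norm{\pf u_k}_{L_{p,q,w}}^\rho\lesssim\norm{\pf u}_{L_{p,q,w}}^\rho$ for any exponent $\rho$ when $p\ne q$; the iterated $L_p(dt;L_q(dx))$ norm simply does not factor through a discrete sum over spatial balls. The paper avoids this entirely: it first proves the estimate in $L_{p,w}$ with $p=q$ using a \emph{continuous} partition $\zeta_z(x)=\zeta(x-z)$, normalized so that $\int_{\rr^d}\zeta_z(x)^p\,dz=1$, which lets one insert this identity under the Lebesgue integral via Fubini (this step genuinely requires matching exponents), and only afterwards invokes the extrapolation theorem of Cruz-Uribe--Martell--P\'erez to reach general $p,q$. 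Your closing remark about ``the extrapolation content of Theorem \ref{main2}'' misattributes where extrapolation is needed: Theorem \ref{main2} already delivers the $L_{p,q,w}$ estimate for a single frozen operator, and the extrapolation is needed after the partition-of-unity argument, to remove the constraint $p=q$ that the summing step imposes. To fix your proposal, you should state explicitly that you first prove the estimate for $p=q$ (with either a continuous partition as in the paper, or a discrete one with $\sum_k\zeta_k^p\simeq 1$), and then appeal to extrapolation.

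A secondary omission: you do not address the continuity of $\partial_t-L$ as a map $\hh_{p,q,w,0}^{1,\sigma}(T)\to L_{p,q,w}(\rr^d_T)$ (the paper quotes the boundedness result from \cite{dyk}), nor do you identify a base case for the method of continuity with the $b,c$ terms present (the paper connects to $\partial_tu+\pf u=f$, which is solvable by Theorem \ref{main2}). For part (ii), your $e^{-\lambda_0 t}$ substitution is a legitimate alternative to the paper's time-cutoff-and-induction argument and does produce the required $T$-dependent constant, provided you apply part (i) with $\lambda=\lambda_0$, move $b^iD_iv$ and $cv$ to the right-hand side, and use the quantitative improvement $\lambda\norm{u}_{L_{p,q,w}}\le N\norm{f}_{L_{p,q,w}}$ hidden in \eqref{lo1} to absorb the zeroth-order term.
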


\begin{remark}
                    \label{rem2.6}
For $\sigma\in(0,2)$, the constant $N$ in above Theorems and Corollaries can be chosen to be dependent on $\Tilde{\sigma } $ instead of on $\sigma$ itself,
where
\[\Tilde{\sigma } = \begin{cases}(\sigma_0,\sigma_1) \quad  &\text{when}\,\,   0 < \sigma_0 \le \sigma \le \sigma_1 < 1, \\
1  \quad &\text{when}\,\,   \sigma =1,\\
\sigma_0 \quad &\text{when}\,\,  1 < \sigma_0 \le \sigma < 2.\end{cases}
\stepcounter{equation}\tag{\theequation}\label{bdds}\]
Thus, when $\sigma > 1$, $N$ does not blow up when $\sigma \nearrow 2$. {To see this, w}e keep track of the dependence of constants on $\sigma$ in Lemmas \ref{com} and the proof of Theorem \ref{main} when $p =2$. Moreover, note that Lemmas \ref{a6} and \ref{aa7} hold for $\sigma_0$, and
$$\norm{u}_{\hh_{p}^{\alpha, \sigma_0}(T)} \le N\norm{u}_{\hh_{p}^{\alpha, \sigma}(T)},$$
where $N$ can be chosen to be independent of $\sigma$ by the Mikhlin multiplier theorem {(see, for instance, \cite{MR542885}) and \cite[Lemma 3.4]{dyk}}.
Thus, we can choose the increment of $p$ in the iteration arguments in the proofs of Theorems \ref{main} and \ref{main2} using $\sigma_0$ instead of $\sigma$. {Similar phenomena were observed before, for example in \cite{MR2494809} and \cite{dong12}.}
\end{remark}

\section{Equations in \texorpdfstring{$L_p$}p}\label{3}
In this section, we prove Theorem \ref{main} and Corollary \ref{newco}. To prove Theorem \ref{main}, we use a level set argument and a bootstrap argument with the Sobolev embedding.

\subsection{The case of \texorpdfstring{$p=2$}p and auxiliary results}
In order to apply the bootstrap argument, we start with the case when $p = 2$.
\begin{proof}[Proof of Theorem \ref{main} when $p =2$]
We first prove the continuity of $\partial^\alpha_t - L  $.
By taking the Fourier transform,
\begin{align*}
 \widehat{Lu}(t,\xi) = \hat{u}(t, \xi)
\int_{\rr^d} (e^{i \xi \cdot y} - 1 - i y \cdot \xi \chi^{(\sigma)} (y))
K(t,y) \, dy
:= \hat{u}(t, \xi) m(\xi)
,\stepcounter{equation}\tag{\theequation}\label{fo4}
\end{align*}
and in particular, by Remark \ref{re},
\begin{align*}
 \widehat{\pf u}(t,\xi) =\hat{u}(t, \xi)
\int_{\rr^d} \big(1 - \cos (\xi \cdot y)\big)  c|y|^{-d-\sigma} \, dy.  \stepcounter{equation} \tag{\theequation}\label{fo1}
\end{align*}
By a change of variables $y\to y/|\xi|$ and the upper bound of $K$, it is seen that
\[
|m(\xi)| \le N(d, \Lambda, \tilde\sigma)|\xi|^\sigma,
%
\]
{where $\tilde\sigma$ is defined in Remark \ref{rem2.6}.}
Thus, \eqref{fo4} leads to
\[ \norm{\widehat{Lu}}_{L_2(\rr^d_T)} \le N \norm{\widehat{u}(\cdot,\xi)|\xi|^{\sigma}}_{L_2(\rr^d_T)}= N\norm{\pf u}_{L_2(\rr^d_T)},\stepcounter{equation}\tag{\theequation}\label{oio}\]
where $N = N(d, \Lambda, \tilde\sigma)$.

Next, we prove the a priori estimate \eqref{ess}. With the continuity of the operator and the density of smooth functions in $\hh_{2,0}^{\alpha, \sigma} (T )$, without loss of generality, we assume that $u \in C_0^\infty ([0,T]\times \rr^d)$ with $u(0, \cdot) = 0$.
Multiplying $(-\Delta)^{\sigma/2} u$ to both sides of \eqref{eqn} and integrating over $\rr^d_T$, we arrive at
\begin{align*}
  \int_{\rr^d_T} \partial_t^\alpha u  \pf u - \int_{\rr^d_T} Lu\pf u + \lambda\int_{\rr^d_T} u\pf u  = \int_{\rr^d_T} f\pf u
\stepcounter{equation}\tag{\theequation}\label{l2}.
\end{align*}

For the first term on the left-hand side of \eqref{l2}, due to \eqref{fo1} and the fact that it is real,
\begin{align*}
&\int_{\rr^d_T} \partial_t^\alpha u(t, x) \pf u(t, x) \, dx \, d t = \int_{\rr^d_T} \widehat{\pf u}(t, \xi) \overline{\partial_t^\alpha\widehat{ u}(t, \xi)} \, d\xi \, dt \\
&=c \int_{\rr^d_T} \overline{\partial_t^\alpha\widehat{ u}(t, \xi)}
\hat{u}(t, \xi)
\int_{\rr^d} \big(1 - \cos (\xi \cdot y)\big)  |y|^{-d-\sigma} \, dy \, d \xi  \, dt\\
&=c \int_{\rr^d_T} \partial_t^\alpha \real (\widehat{ u})(t, \xi)
\real(\hat{u})(t, \xi)
\int_{\rr^d} \big(1 - \cos (\xi \cdot y)\big)  |y|^{-d-\sigma} \, dy \, d \xi  \, dt\\
&\quad + c \int_{\rr^d_T} \partial_t^\alpha \imag (\widehat{ u})(t, \xi)
\imag(\hat{u})(t, \xi)
\int_{\rr^d} \big(1 - \cos (\xi \cdot y)\big)  |y|^{-d-\sigma} \, dy \, d \xi  \, dt.
\stepcounter{equation}\tag{\theequation}\label{fo3}
\end{align*}
Also, by \cite[Proposition 4.1]{dong19}, we have
\begin{align*}
   & \partial_t^\alpha \real (\widehat{ u})(t, \xi) \real(\hat{u})(t, \xi) + \partial_t^\alpha \imag (\widehat{ u})(t, \xi)
\imag(\hat{u})(t, \xi) \\
&\ge \frac{1}{2}\partial_t^\alpha | \real (\widehat{ u})|^2 (t, \xi)+ \frac{1}{2}\partial_t^\alpha | \imag (\widehat{ u})|^2(t, \xi) =  \frac{1}{2}\partial_t^\alpha | \hat{u}|^2(t, \xi) \stepcounter{equation}\tag{\theequation}\label{foo3}.
\end{align*}
Thus, by \eqref{fo3}, \eqref{foo3}, and the definition of $\partial_t^\alpha$,
\begin{align*}
&\int_{\rr^d_T} \partial_t^\alpha u(t, x) \pf u(t, x) \, dx \, d t \\
&\geq c\int_{\rr^d_T} \frac{1}{2} \partial_t^\alpha | \hat{u}|^2(t, \xi)
\int_{\rr^d} \big(1 - \cos (\xi \cdot y)\big)  |y|^{-d-\sigma} \, dy  \, d \xi \, dt \\
&= N(d, \sigma) \int_{\rr^d} \int_{\rr^d} \big( 1-\cos (\xi \cdot y)\big) |y|^{-d- \sigma} \int_0^T \partial_t^\alpha |\hat{u}|^2(t, \xi)  \, dt \, dy \, d \xi \\
&=
N(d, \alpha,\sigma )
\int_{\rr^d} \int_{\rr^d} \big(1-\cos (\xi \cdot y)\big) |y|^{-d - \sigma}
\int_0^T (T-s)^{-\alpha}
| \hat{u} (s, \xi) |^2 \, ds \, dy  \, d \xi \ge 0.
\end{align*}
For the second term on the left-hand side of \eqref{l2}, by \eqref{fo1} and \eqref{fo4},
\begin{align*}
&-\int_{\rr^d_T} Lu(t,x) \pf u(t, x)  \, dx \, dt=- \int_{\rr^d_T}  \overline{\widehat{Lu}(t,\xi)} \widehat{\pf u}(t, \xi) \, d\xi \, dt \\
&= \int_{\rr^d_T}  | \hat{u}(t, \xi) |^2
\bigr( \int_{\rr^d} \big(1 - \cos (\xi \cdot y)\big)  K (t,y) \, dy \bigr) \int_{\rr^d} \big(1 - \cos (\xi \cdot y)\big)  c|y|^{-d-\sigma} \, dy  \, d \xi \, dt \\
&\ge c^{-1} (2-\sigma) \nu \int_{\rr^d_T}
| \widehat{\pf u}(t, \xi)|^2  \, d \xi \, dt \\
&= N(d, \nu) \| (-\Delta)^{\sigma/2} u\|_{L_2(\rr^d_T)}^2, \stepcounter{equation}\tag{\theequation}\label{fo2}
\end{align*}
where {in the second equality we used the fact that the left-hand side is real, and here} $N$ is independent of $\sigma$ since for $\sigma\in(0,2)$, $c^{-1} (2-\sigma)  \ge N(d)$ by the continuity of the $\Gamma$ function and the definition of $c$ in Remark \ref{re}.
The third term on the left-hand side of \eqref{l2} is nonnegative.
{Similarly, multiplying $\lambda u$ to both sides of \eqref{eqn} and integrating over $\rr^d_T$, we get
\begin{equation}
                        \label{eq5.01}
\lambda^2 \int_{\rr^d_T} u^2\le \int_{\rr^d_T} f^2.
\end{equation}}

Thus, \eqref{l2}, \eqref{fo3}, \eqref{fo2}, {and \eqref{eq5.01}} together with Young's inequality yield
\[
\| (-\Delta)^{\sigma/2} u\|_{L_2(\rr^d_T)} + \lambda \| u\|_{L_2(\rr^d_T)}
\leq N(d, \nu) \norm{f}_{L_2(\rr^d_T)}.
\]
Together with \eqref{oio}, we arrive at
\begin{align*}
\||L^1 u| + \lambda |u|\|_{L_2(\rr^d_T)}
&\le N(d,\Lambda,\tilde\sigma)\||\pf u| + \lambda |u|\|_{L_2(\rr^d_T)} \\
&\leq N(d, \nu,\Lambda,\tilde\sigma) \norm{f}_{L_2(\rr^d_T)},
\end{align*}
where $L^1$ is defined in \eqref{ass}.
Finally, by \cite[Theorem 2.8]{kim20} together with the method of continuity, we have the existence of solutions. Theorem \ref{main} is proved when $p=2$.
\end{proof}

Next, assuming that Theorem \ref{main} holds,  we derive a local estimate. In the following lemma, we denote $\norm{\, \cdot\, }_{p,r} := \norm{\,\cdot\,}_{L_p((0,T) \times B_r)}$ for any $r> 0 $. Also, recall the definition of $ \norm{\cdot}_{L_p((0,T);L_1(\rr^d,\psi))}$ in \eqref{itr}.
\begin{lemma}\label{local}
Let $\alpha \in (0,1]$, $\sigma\in(0,2)$, $T \in (0, \infty)$, $p  \in (1, \infty)$, and $0<r<R< \infty$. Also, let $\zeta_0 $ be a cutoff function such that
$$\zeta_0 \in C_0^{\infty}(B_{(R+r)/2}), \quad \zeta_0 = 1 \qx{in} B_{r},\qx{and}|D\zeta_0|\le  4/(R-r).$$
Suppose that Theorem \ref{main} holds {for this $p$}. If $u \in {\hh_{p,0}^{\alpha,\sigma}(T)}$, and $L^1$ satisfies \eqref{sig1} {when $\sigma=1$} and \eqref{ass}, and
\begin{align*}
    \partial_t^\alpha u- Lu + \lambda u= f \qx{in} \rr^d_T, \stepcounter{equation}\tag{\theequation}\label{es22}
\end{align*}
then
\begin{align*}
& \norm{\partial_t^\alpha  (\zeta_0 u)}_{L_p(\rr^d_T)}  + \norm{L^1(\zeta_0 u)}_{L_p(\rr^d_T)} + \lambda \norm{\zeta_0 u}_{L_p(\rr^d_T)}
\\&\le N \norm{ f}_{p,R}+  N  \frac{\norm{u}_{p,R}}{(R-r)^{\sigma}}   + N \frac{R^{d/p}(1 + R^{d+\sigma} )}{(R-r)^{d+\sigma}}\norm{u}_{L_p((0,T);L_1(\rr^d,\psi))},\stepcounter{equation}\tag{\theequation}\label{fe}
\end{align*}
and
\begin{align*}
&\norm{\partial_t^\alpha u }_{p,r} + \norm{L^1 u}_{p,r} +  \lambda \norm{ u}_{p,r}  \\
&\le N \norm{ f}_{p,R}+  N  \frac{\norm{u}_{p,R}}{(R-r)^{\sigma}}   + N \frac{R^{d/p}(1 + R^{d+\sigma} )}{(R-r)^{d+\sigma}}\norm{u}_{L_p((0,T);L_1(\rr^d,\psi))}, \stepcounter{equation}\tag{\theequation}\label{es2}
\end{align*}
where $N = N(d,\nu,\Lambda,\alpha,\sigma, p)$.
\end{lemma}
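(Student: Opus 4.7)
The plan is to set $v := \zeta_0 u$ and invoke Theorem \ref{main} at exponent $p$. Since $\zeta_0$ depends only on $x$ and $v(0,\cdot)=0$, we have $v\in\hh_{p,0}^{\alpha,\sigma}(T)$; multiplying \eqref{es22} by $\zeta_0$ and using $\partial_t^\alpha(\zeta_0 u)=\zeta_0\partial_t^\alpha u$ yields
\[
\partial_t^\alpha v - Lv + \lambda v = \zeta_0 f + h \qx{in} \rr^d_T,
\]
where the commutator is
\[
h(t,x) := L(\zeta_0 u)(t,x) - \zeta_0(x)Lu(t,x) = \int \bigl[(\zeta_0(x+y)-\zeta_0(x))u(t,x+y) - y\cdot u(t,x)\nabla\zeta_0(x)\chi^{(\sigma)}(y)\bigr]K(t,x,y)\,dy.
\]
Applying Theorem \ref{main} to $v$ and using $\supp\zeta_0\subset B_R$ to bound $\norm{\zeta_0 f}_{L_p(\rr^d_T)}\le\norm{f}_{p,R}$ reduces \eqref{fe} to proving
\[
\norm{h}_{L_p(\rr^d_T)} \le \frac{N\norm{u}_{p,R}}{(R-r)^\sigma} + \frac{NR^{d/p}(1+R^{d+\sigma})}{(R-r)^{d+\sigma}}\norm{u}_{L_p((0,T);L_1(\rr^d,\psi))}.
\]

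The commutator bound is the heart of the proof. I would first rewrite
\[
h(t,x) = u(t,x)L\zeta_0(x) + \int(\zeta_0(x+y)-\zeta_0(x))(u(t,x+y)-u(t,x))K(t,x,y)\,dy,
\]
and treat each piece. For the first, using $|\nabla\zeta_0|\le 4/(R-r)$, $|D^2\zeta_0|\le N/(R-r)^2$, and Taylor expansion one has $|\zeta_0(x+y)-\zeta_0(x)-y\cdot\nabla\zeta_0(x)\chi^{(\sigma)}(y)|\le N\min(1,|y|^2/(R-r)^2)$; integrating against $K\le N/|y|^{d+\sigma}$ gives $|L\zeta_0(x)|\le N/(R-r)^\sigma$. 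Multiplying by $u$ and integrating over $(0,T)\times B_R$ produces the first stated term, while the contribution from $|x|>R$ (where the analogous scaling gives $|L\zeta_0(x)|\le NR^d/|x|^{d+\sigma}$) feeds into the weighted term. The symmetric second piece is handled via $|\zeta_0(x+y)-\zeta_0(x)|\le N\min(1,|y|/(R-r))$ together with Minkowski's inequality in $y$, after further splitting $u=u1_{B_R}+u1_{B_R^c}$. Finally, for $x$ outside $\supp\zeta_0$ the commutator collapses to $h(t,x)=\int\zeta_0(x+y)u(t,x+y)K(t,x,y)\,dy$; the change of variables $z=x+y$ combined with $|x-z|\ge|x|/2$ for $|x|\ge R+r$ and the kernel decay produces precisely the factor $R^{d/p}(1+R^{d+\sigma})/(R-r)^{d+\sigma}$ against the weighted $L_p(L_1(\psi))$ norm.

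To pass from \eqref{fe} to \eqref{es2}, observe that $\zeta_0\equiv 1$ on $B_r$, so on $(0,T)\times B_r$ we have $v=u$, $\partial_t^\alpha u=\partial_t^\alpha v$, and $\lambda u=\lambda v$; for $L^1$, since $\nabla\zeta_0=0$ on $B_r$,
\[
(L^1 u - L^1 v)(t,x) = \int(1-\zeta_0(x+y))u(t,x+y)K_1(t,x,y)\,dy \qx{for} x\in B_r,
\]
with integrand vanishing on $\{|x+y|\le r\}$. Splitting by $|y|\le(R-r)/2$ (confining $|x+y|\le R$, giving a $\norm{u}_{p,R}/(R-r)^\sigma$ contribution via the scaling of $1-\zeta_0$) and $|y|>(R-r)/2$ (producing the weighted $L_p(L_1(\psi))$ tail) gives precisely the right-hand side of \eqref{fe}, and combining yields \eqref{es2}. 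The main technical obstacle will be the careful bookkeeping in the commutator estimate---in particular organizing the three regions ($x$ inside $B_r$, $x$ in the transition $B_{(R+r)/2}\setminus B_r$, and $x$ outside $\supp\zeta_0$) so that the factors $(R-r)^{-\sigma}$, $(R-r)^{-(d+\sigma)}$, and $R^{d/p}(1+R^{d+\sigma})$ assemble exactly as stated.
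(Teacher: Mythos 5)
Your overall strategy (conjugate with a cutoff, apply Theorem~\ref{main} to $\zeta_0 u$, and bound the commutator $h=L(\zeta_0 u)-\zeta_0 Lu$) is the right framework, and for $\sigma\in(0,1)$ your argument does close: each factor you describe gives $|y|^{1-d-\sigma}$ near $y=0$, which is integrable when $\sigma<1$, and the tail produces the weighted $L_p(L_1(\psi))$ term. This matches the paper's Case~1, which uses a single cutoff.

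The proposal breaks down for $\sigma\ge 1$, and that is where the real difficulty lies. Your commutator split is
\[
h = u\,L\zeta_0 + \int\bigl(\zeta_0(x+y)-\zeta_0(x)\bigr)\bigl(u(t,x+y)-u(t,x)\bigr)K\,dy,
\]
and the second piece does not admit the bound you claim. Near $y=0$ you need both factors small to make $|y|^{-d-\sigma}$ integrable; the only usable smallness of $u(\cdot+y)-u(\cdot)$ is $\lesssim |y|\,|Du|$ (Minkowski with $\|u(\cdot+y)-u(\cdot)\|_{L_p}\le 2\|u\|_{L_p}$ forces $\int_{|y|<1}|y|^{1-d-\sigma}\,dy<\infty$, which fails once $\sigma\ge1$). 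So any honest estimate of $h$ necessarily produces a term like $(R-r)^{1-\sigma}\|Du\|_{p,R'}$ for some $r<R'<R$, which is \emph{not} on the right-hand side of \eqref{fe}. You cannot dispose of it by applying the interpolation $\|Du\|\lesssim \varepsilon^{1/(1-\sigma)}\|u\|+\varepsilon\|\pf u\|$ once and absorbing: the gradient lives on a ball strictly larger than the one where $\zeta_0\equiv 1$, so the $\|\pf(\cdot)\|$ it produces is at a different cutoff than the one on your left-hand side and does not cancel. This is precisely why the paper replaces the single cutoff by an increasing family $\{\zeta_k\}$ (see \eqref{cutt}), establishes the commutator bounds \eqref{a41}--\eqref{a43} of Lemma~\ref{com} with the explicit gradient term at scale $r_{k+3}$, and then closes the recursion by multiplying the level-$k$ estimate by $\varepsilon^k$, summing the geometric series, and absorbing. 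The same phenomenon reappears at $\sigma=1$, where the paper additionally exploits the symmetry \eqref{sig1} and tunes a free truncation radius $\delta_k$. Your passage from \eqref{fe} to \eqref{es2} is fine in spirit, but it rests on the commutator estimate, so the gap above is load-bearing: as written, the proof only proves the lemma for $\sigma\in(0,1)$.
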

\begin{proof}
We first take cutoff functions in the spatial variables as follows. For $k = 1,2, \ldots$, let
$$ r_k = r + (R-r)\sum_{j = 1}^{k}2^{-j},\quad \zeta_k \in C_0^{\infty}(B_{r_{k+1}}),$$
\[ \zeta_k\in[0,1], \quad \zeta_k = 1 \qx{in} B_{r_k}, \quad |D\zeta_k|\le  \frac{4\cdot2^{k}}{R-r}, \qx{and} |D^2\zeta_k|\le \frac{16\cdot2^{2k}}{(R-r)^2}. \stepcounter{equation}\tag{\theequation}\label{cutt} \]
It follows that $\zeta_k u \in {\hh_{p,0}^{\alpha,\sigma}(T)}$ and
\begin{align*}
\partial_t^\alpha (\zeta_k u)- L(\zeta_k u) + \lambda (\zeta_k u) = \zeta_kf +\zeta_k Lu -L(\zeta_k u)   \qx{in} \rr^d_T.
\stepcounter{equation}\tag{\theequation}\label{cut}
\end{align*}
By \eqref{up},
\begin{equation}
                \label{6}
\begin{aligned}
&{\norm{\partial_t^\alpha u}_{p,r} }+ \norm{L^1u}_{p,r_k} + \lambda \norm{ u}_{p,r}\\
&\le \norm{\partial_t^\alpha (\zeta_k u)}_{p,r} + \norm{\zeta_kL^1u}_{p,r_k}+ \lambda \norm{ \zeta_k u}_{p,r}\\
&\le   \norm{L^1(\zeta_k u)-\zeta_kL^1u  }_{L_p(\rr^d_T)} + \norm{|\partial_t^\alpha  (\zeta_k u)|+|L^1(\zeta_k u)| + \lambda|\zeta_k u|}_{L_p(\rr^d_T)} \\
&\le   \norm{L^1(\zeta_k u)-\zeta_kL^1u  }_{L_p(\rr^d_T)} + N \norm{|\partial_t^\alpha  (\zeta_k u)|+|\pf(\zeta_k u)|+ \lambda|\zeta_k u|}_{L_p(\rr^d_T)} .
\end{aligned}
\end{equation}
Moreover, by applying Theorem \ref{main} to \eqref{cut}, we have
\begin{align*}
&\norm{|\partial_t^\alpha  (\zeta_k u)|+|\pf(\zeta_k u)|+ \lambda|\zeta_k u|}_{L_p(\rr^d_T)} \\
&\le N \norm{\zeta_kf +\zeta_k Lu -L(\zeta_k u) }_{L_p(\rr^d_T)}\\
&\le N \norm{ f}_{p,R}  + N \norm{\zeta_k Lu -L(\zeta_k u) }_{L_p(\rr^d_T)} , \stepcounter{equation}\tag{\theequation}\label{6po}
\end{align*}
where {$N = N(d, \nu, \Lambda, \alpha,\sigma, p)$}.
We refer the reader to Lemma \ref{com} for the details about the estimates of {the commutator term} $\norm{\zeta_k Lu -L(\zeta_k u) }_{L_p(\rr^d_T)}$. Note that since we only used the upper bound of the kernel of the operator in Lemma \ref{com}, the estimate can be applied to $ \norm{ L^1(\zeta_k u) - \zeta_kL^1u  }_{L_p(\rr^d_T)}$

{\em Case 1: $\sigma \in (0,1)$.} In this case, \eqref{es2} and \eqref{fe} follow directly from \eqref{6}, \eqref{6po} and \eqref{a41} with $k = 0$.

{\em Case 2: $\sigma \in (1,2) $.} In this case, by \eqref{6po} and \eqref{a42}, we have
\begin{align*}
&\norm{\partial_t^\alpha  (\zeta_k u)}_{L_p(\rr^d_T)}+ \norm{\pf(\zeta_k u)}_{L_p(\rr^d_T)} + \lambda \norm{\zeta_k u}_{L_p(\rr^d_T)} \\
&\le N \norm{f}_{p,R} + N \frac{2^{(\sigma-1) k}}{(R-r)^{\sigma-1}} \norm{Du}_{p,r_{k+3}} + N  \frac{2^{\sigma k}}{(R-r)^{\sigma}} \norm{u}_{p,R}  \\
&\quad+ N \frac{2^{(d+\sigma)k}}{(R-r)^{d+\sigma}}R^{d/p}(1 + R^{d+\sigma} ) \norm{u}_{L_p((0,T);L_1(\rr^d,\psi))}. \stepcounter{equation}\tag{\theequation}\label{p03}
\end{align*}
Also, note that by Lemma \ref{a1}, for any $\ep \in(0,1)$,
\begin{align*}
& N \frac{2^{(\sigma-1) k}}{(R-r)^{\sigma-1}} \norm{Du}_{p,r_{k+3}} \le  N \frac{2^{(\sigma-1) k}}{(R-r)^{\sigma-1}} \norm{D(\zeta_{k+3} u)}_{L_p(\rr^d_T)} \\
&\le N \frac{2^{\sigma k}}{(R-r)^{\sigma}} \ep^{3/(1- \sigma)}\norm{\zeta_{k+3} u}_{L_p(\rr^d_T)} + \ep^3 \norm{(-\Delta)^{\sigma/2}(\zeta_{k+3} u)}_{L_p(\rr^d_T)}\\
&\le  N  \frac{2^{\sigma k}}{(R-r)^{\sigma}}  \ep^{3/(1- \sigma)}\norm{u}_{p,R} + \ep^3 \norm{(-\Delta)^{\sigma/2}(\zeta_{k+3} u)}_{L_p(\rr^d_T)}.
\stepcounter{equation}\tag{\theequation}\label{ee4}
\end{align*}
Therefore, \eqref{p03} and \eqref{ee4} lead to
\begin{align*}
&\norm{\partial_t^\alpha  (\zeta_k u)}_{L_p(\rr^d_T)}+ \norm{(-\Delta)^{\sigma/2}(\zeta_k u)}_{L_p(\rr^d_T)} + \lambda \norm{\zeta_k u}_{L_p(\rr^d_T)}\\\
&\le N \norm{f}_{p,R}  +  N  \frac{2^{\sigma k}}{(R-r)^{\sigma}}  \ep^{3/(1- \sigma)}\norm{u}_{p,R} + \ep^3 \norm{(-\Delta)^{\sigma/2}(\zeta_{k+3} u)}_{L_p(\rr^d_T)} \\
&\quad+ N \frac{2^{(d+\sigma)k}}{(R-r)^{d+\sigma}}R^{d/p}(1 + R^{d+\sigma} ) \norm{u}_{L_p((0,T);L_1(\rr^d,\psi))}. \stepcounter{equation}\tag{\theequation}\label{ee5}
\end{align*}
Multiplying $\ep^k$ to both sides of \eqref{ee5} and taking the sum over $k = 0,1, \ldots$ lead to
\begin{align*}
&\norm{|\partial_t^\alpha  (\zeta_0 u)| + |\lambda  (\zeta_0 u)|}_{L_p(\rr^d_T)}\sum_{k=0}^\infty \ep^k  + \sum_{k=0}^\infty \ep^k \norm{(-\Delta)^{\sigma/2}(\zeta_k u)}_{L_p(\rr^d_T)} \\
&\le  N  \frac{ \ep^{3/(1- \sigma)}}{(R-r)^{\sigma}} \norm{u}_{p,R} \sum_{k=0}^\infty (\ep 2 ^\sigma)^k  + \sum_{k=0}^\infty \ep^{k+3}\norm{(-\Delta)^{\sigma/2}(\zeta_{k+3} u)}_{L_p(\rr^d_T)} \\
&\quad+N \norm{f}_{p,R} \sum_{k=0}^\infty \ep^k   + N \frac{R^{d/p}(1 + R^{d+\sigma} )}{(R-r)^{d+\sigma}} \norm{u}_{L_p((0,T);L_1(\rr^d,\psi))}\sum_{k=0}^\infty (\ep2^{d+\sigma})^k . \stepcounter{equation}\tag{\theequation}\label{ee6}
\end{align*}
Therefore, by first taking $\ep$ to be sufficiently small so that $\ep2^{d+\sigma}< 1$, and then absorbing the second term on the right-hand side of \eqref{ee6} to the left-hand side, we arrive at
\eqref{fe} with $\pf$ in place of $L^1$ on the left-hand side. The estimates for general $L^1$ follow from \eqref{up}.
Similarly, it is easily seen that if we replace $\zeta_0 u$ with $\zeta_3 u$ on the left-hand side of \eqref{fe}, the inequality still holds. Thus, by  \eqref{a42}, \eqref{up}, and \eqref{ee4} with $k= 0$,
\begin{align*}
&\norm{L^1u}_{p,r} \le  \norm{L^1(\zeta_0 u)}_{L_p(\rr^d_T)}
+\norm{L^1(\zeta_0 u)-\zeta_0L^1u  }_{L_p(\rr^d_T)} \\
& \le \norm{(-\Delta)^{\sigma/2}(\zeta_0 u)}_{L_p(\rr^d_T)} +  N  \frac{\norm{u}_{{p,}R}}{(R-r)^{\sigma}}   +  \norm{(-\Delta)^{\sigma/2}(\zeta_{3} u)}_{L_p(\rr^d_T)}   \\
&\quad + N \frac{R^{d/p}(1 + R^{d+\sigma} )}{(R-r)^{d+\sigma}} \norm{u}_{L_p((0,T);L_1(\rr^d,\psi))} \\
&\le  N \norm{ f}_{p,R} + N \frac{\norm{ u}_{p,R}}{(R-r)^\sigma}+  N \frac{R^{d/p}(1 + R^{d+\sigma} )}{(R-r)^{d+\sigma}} \norm{u}_{L_p((0,T);L_1(\rr^d,\psi))}.
\end{align*}
Combining this with \eqref{fe}, we infer \eqref{es2} when $\sigma\in(1,2)$.

{\em Case 3: $\sigma = 1$.} In this case, for any $\ep\in(0,1)$ and $k\ge 1$, by \eqref{6}, \eqref{6po}, and \eqref{a43}, we have
\begin{align*}
\norm{|\partial_t^\alpha u |+ |\lambda u| }_{p,r_1}& + \norm{D u}_{p,r_k} \le N \norm{f}_{p,R} + \ep^3\norm{Du}_{p,r_{k+3}} +  N\frac{2^{k}}{(R-r)}\ep^{-3} \norm{u}_{p,R} \\
& +  N R^{d/p}\bigg(1+ \frac{2^{(d+1)k}\ep^{-3(d+1)}}{(R-r)^{d+1}}  (1 + R^{d+1} )\bigg) \norm{u}_{L_p((0,T);L_1(\rr^d,\psi))}. \stepcounter{equation}\tag{\theequation}\label{ee9}
\end{align*}
Multiplying $\ep^{k-1}$ to both sides of \eqref{ee9} and taking the sum over $k = 1, \ldots$ lead to
\begin{align*}
&\norm{|\partial_t^\alpha u| + |\lambda u| }_{p,r_1}\sum_{k=1}^\infty \ep^{k-1} + \sum_{k=1}^\infty \ep^{k-1} \norm{D u}_{p,r_k}\\
&\le N \norm{f}_{p,R} \sum_{k=1}^\infty \ep^{k-1}  +  \sum_{k=1}^\infty \ep^{k+2}\norm{Du}_{p,r_{k+3}}+  N\frac{\ep^{-4} \norm{u}_{p,R}}{(R-r)} \sum_{k=1}^\infty (\ep2)^k \\
&\,\, +  N R^{d/p} \norm{u}_{L_p((0,T);L_1(\rr^d,\psi))}\sum_{k=1}^\infty \ep^{k-1}\bigg(1+ \frac{2^{(d+1)k}\ep^{-3(d+1)}}{(R-r)^{d+1}}  (1 + R^{d+1} )\bigg). \stepcounter{equation}\tag{\theequation}\label{ee8}
\end{align*}
Therefore, by first taking $\ep$ to be sufficiently small so that $\ep2^{d+1}< 1$, and then absorbing the second term on the right-hand side of \eqref{ee8} to the left-hand side, we obtain
\begin{align*}
&\norm{\partial_t^\alpha u }_{p,r_1} + \norm{Du}_{p,r_1} + \lambda \norm{u}_{p,r_1} \\
&\le N \norm{ f}_{p,R}+  N  \frac{\norm{u}_{p,R}}{R-r}   + N \frac{R^{d/p}(1 + R^{d+1} )}{(R-r)^{d+1}}\norm{u}_{L_p((0,T);L_1(\rr^d,\psi))}.
\end{align*}
Also, by \eqref{up} and
\begin{align*}
&\norm{\partial_t^\alpha  (\zeta_0 u)}_{L_p(\rr^d_T)}  + \norm{D(\zeta_0 u)}_{L_p(\rr^d_T)} +  \lambda\norm{ \zeta_0 u}_{L_p(\rr^d_T)}\\
&\le \norm{\partial_t^\alpha  u}_{p,r_1}  + \norm{D u}_{p,r_1} {+N \frac{\norm{u}_{p,r_1}}{R-r} ,}
\end{align*}
we obtain \eqref{fe} when $\sigma=1$. As before, replacing $\zeta_0 {u}$ with $\zeta_3 {u}$ on the left-hand side of \eqref{fe} together with  \eqref{6} and \eqref{a43}, we obtain \eqref{es2}.
The lemma is proved.
\end{proof}

{\begin{remark}\label{qp}
In Lemma \ref{local}, we assume that $u \in {\hh_{p,0}^{\alpha,\sigma}(T)}$. However, in the proof we only used the fact that $\zeta_k u \in \hh_{p,0}^{\alpha,\sigma}(T)$. Thus,
it suffices to assume that $u|_{B_R} \in {\hh_{p,0}^{\alpha,\sigma}((0,T)\times B_R)}$ for $u$ defined on $(0,T)\times \rr^d$ satisfying \eqref{es22} {in $(0,T)\times B_R$}.
\end{remark}}

Lemma \ref{local} and the embeddings in Lemmas \ref{a5} and \ref{aa7} lead to the following corollary.
\begin{corollary}\label{so}
Let $\alpha \in (0,1]$, $\sigma\in(0,2)$, $T \in (0, \infty)$, $p  \in (1, \infty)$, $0<r<R< \infty$, $\lambda\ge 0$, $u \in L_p((0,T);L_1(\rr^d,\psi))$ be such that $u|_{{(0,T)\times} B_R} \in {\hh_{p,0}^{\alpha,\sigma}((0,T)\times B_R)}$, and $q\in (p,\infty)$ satisfy
\[
1/q= 1/p -\alpha\sigma/(\alpha d + \sigma).
\]
{Let $f=\partial_t^\alpha u- Lu+ \lambda u$ in $(0,T)\times B_R$.}
\begin{enumerate}
    \item  If $p \le d/\sigma + 1/\alpha$, then for any $l \in[p,q]$,
    \begin{align*}
     \norm{u}_{L_l((0,T)\times B_r)} & \le  N \norm{ f}_{L_p((0,T)\times B_R)}+  N  \frac{\norm{u}_{L_p((0,T)\times B_R)}}{(R-r)^{\sigma}}\\
     & \quad \quad + N \frac{R^{d/p}(1 + R^{d+\sigma} )}{(R-r)^{d+\sigma}}\norm{u}_{L_p((0,T);L_1(\rr^d,\psi))},
\end{align*}
where $N = N(d,\alpha,\sigma,p,l,T)$.
    \item If $p > d/\sigma + 1/\alpha$, then there exists {$\tau = \sigma -(d+\sigma/\alpha)/p \in (0,1)$} such that

     \begin{align*}
     \norm{u}_{C^{\tau\alpha/\sigma,\tau}((0,T)\times B_r)} & \le  N \norm{ f}_{L_p((0,T)\times B_R)}+  N  \frac{\norm{u}_{L_p((0,T)\times B_R)}}{(R-r)^{\sigma}}\\
     & \quad \quad + N \frac{R^{d/p}(1 + R^{d+\sigma} )}{(R-r)^{d+\sigma}}\norm{u}_{L_p((0,T);L_1(\rr^d,\psi))},
\end{align*}
where $N = N(d,\alpha,\sigma,p,T)$.
\end{enumerate}
\end{corollary}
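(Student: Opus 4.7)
The plan is to reduce the local statement to a global Sobolev/Hölder embedding for functions in $\hh_{p,0}^{\alpha,\sigma}(T)$, using Lemma \ref{local} as the bridge. Fix an intermediate radius, say $r' = (r+R)/2$, and take a cutoff $\zeta_0 \in C_0^\infty(B_{(R+r)/2})$ with $\zeta_0 \equiv 1$ on $B_r$ and $|D\zeta_0| \le 4/(R-r)$. By Remark \ref{qp}, even though $u$ is only in $\hh_{p,0}^{\alpha,\sigma}((0,T)\times B_R)$, Lemma \ref{local} applies and produces the estimate \eqref{fe} for $\zeta_0 u$, which lies globally in $\hh_{p,0}^{\alpha,\sigma}(T)$ with compact support in $x$. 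Combining \eqref{fe} with the trivial bound $\|\zeta_0 u\|_{L_p(\rr^d_T)} \le \|u\|_{L_p((0,T)\times B_R)}$ and using \eqref{up} (or \eqref{hj}) to pass from $L^1(\zeta_0 u)$ to $(1-\Delta)^{\sigma/2}(\zeta_0 u)$, we obtain
\[
\|\zeta_0 u\|_{\hh_{p,0}^{\alpha,\sigma}(T)} \le N\|f\|_{L_p((0,T)\times B_R)} + \frac{N\|u\|_{L_p((0,T)\times B_R)}}{(R-r)^\sigma} + \frac{NR^{d/p}(1+R^{d+\sigma})}{(R-r)^{d+\sigma}}\|u\|_{L_p((0,T);L_1(\rr^d,\psi))}.
\]

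With this global bound in hand, both parts of the corollary follow by applying the appropriate embedding to $\zeta_0 u$ and then restricting to $(0,T)\times B_r$, where $\zeta_0 u = u$. For part (1), when $p \le d/\sigma + 1/\alpha$, Lemma \ref{a5} gives $\|\zeta_0 u\|_{L_q(\rr^d_T)} \le N\|\zeta_0 u\|_{\hh_{p,0}^{\alpha,\sigma}(T)}$ with the critical exponent $1/q = 1/p - \alpha\sigma/(\alpha d + \sigma)$; the intermediate $L_l$ bounds for $l \in [p,q]$ then follow by interpolation between $L_p$ and $L_q$ on the bounded domain $(0,T)\times B_r$ (here the constant may depend on $T$). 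For part (2), when $p > d/\sigma + 1/\alpha$, Lemma \ref{aa7} embeds $\hh_{p,0}^{\alpha,\sigma}(T)$ into a parabolic Hölder space $C^{\tau\alpha/\sigma,\tau}$ with $\tau = \sigma - (d + \sigma/\alpha)/p \in (0,1)$, so that $\|\zeta_0 u\|_{C^{\tau\alpha/\sigma,\tau}} \le N\|\zeta_0 u\|_{\hh_{p,0}^{\alpha,\sigma}(T)}$, and restricting to $(0,T)\times B_r$ yields the claim.

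The chief subtlety—and the only place where care is really required—is the justification that $\zeta_0 u$ belongs to the global space $\hh_{p,0}^{\alpha,\sigma}(T)$ with zero initial trace, so that the embedding lemmas apply; this is exactly what Lemma \ref{local} provides via \eqref{fe}, together with the observation in Remark \ref{qp} that the derivation of \eqref{fe} only uses $\zeta_k u \in \hh_{p,0}^{\alpha,\sigma}(T)$ rather than $u$ itself. Once that is secured, the rest is a routine application of Sobolev embedding (case $p \le d/\sigma + 1/\alpha$) or Morrey-type embedding (case $p > d/\sigma + 1/\alpha$) for the parabolic scale governed by $\partial_t^\alpha$ and $(1-\Delta)^{\sigma/2}$, with the weights on $R$ and $R-r$ inherited verbatim from \eqref{fe}.
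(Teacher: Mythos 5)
Your proposal is correct and follows essentially the same route as the paper: take the cutoff $\zeta_0$ from Lemma \ref{local}, use \eqref{fe} together with Remark \ref{qp} to bound $\|\zeta_0 u\|_{\hh_p^{\alpha,\sigma}(T)}$ (choosing $L^1=(-\Delta)^{\sigma/2}$ and adding the trivial $\|\zeta_0 u\|_{L_p}$ bound to control the Bessel potential norm), then apply Lemma \ref{a5} or Lemma \ref{aa7} to $\zeta_0 u$ and restrict to $(0,T)\times B_r$. The only cosmetic difference is that you interpolate the intermediate exponents $l\in[p,q]$ via Hölder on the bounded cylinder, whereas Lemma \ref{a5}(1) already yields all $l\in[p,q]$ directly (by interpolating $\|\zeta_0 u\|_{L_l(\rr^d_T)}$ between $L_p$ and $L_q$ on the full strip), which is cleaner and avoids introducing an $r$-dependence into the constant.
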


\begin{proof}
(1) If $p \le d/\sigma + 1/\alpha$, recall the definition of $\zeta_0$ in Lemma \ref{local}. By the Sobolev embeddings in Lemma \ref{a5} and H\"older's inequality, we have
\begin{align*}
 &\norm{u}_{L_l((0,T)\times B_r)} \le   N \norm{\zeta_0 u}_{L_l(\rr^d_T)}  \le N \norm{\zeta_0 u}_{\hh_{p}^{\alpha,\sigma}(T)} \\
 &\le N \big( \norm{ f}_{L_p((0,T)\times B_R)}+   \frac{\norm{u}_{L_p((0,T)\times B_R)}}{(R-r)^{\sigma}}   + \frac{R^{d/p}(1 + R^{d+\sigma} )}{(R-r)^{d+\sigma}}\norm{u}_{L_p((0,T);L_1(\rr^d,\psi))}\big),
\end{align*}
where for the last inequality, we used \eqref{fe} and Remark \ref{qp}.

(2) If $p > d/\sigma + 1/\alpha$, the proof is similar to that of (1) by using Lemma \ref{aa7}.
\end{proof}

By Lemma \ref{local} with a scaling in the spatial coordinates, we derive an estimate that will be used later in Section \ref{4}.

\begin{corollary}\label{lr}
Let $\alpha \in (0,1]$, $\sigma\in(0,2)$, $T \in (0, \infty)$, $p  \in (1, \infty)$, and $R\in (0, \infty)$. Suppose that Theorem \ref{main} holds {for this $p$.} {If $u \in \hh_{p,0}^{\alpha,\sigma}(T)$}, and $L^1$ satisfies \eqref{sig1} {when $\sigma=1$} and \eqref{ass}, and
\begin{align*}
    \partial_t^\alpha u- Lu + \lambda u= f \qx{in} \rr^d_T,
\end{align*}
then
\begin{align*}
&\big( | \partial_t^\alpha u|^{p} \big)_{(0,T)\times B_{R/2}(x_0)}^{1/{p}} + \big( |L^1 u|^{p} \big)_{(0,T)\times B_{R/2}(x_0)}^{1/{p}} + \lambda \big( | u|^{p} \big)_{(0,T)\times B_{R/2}(x_0)}^{1/{p}}  \\
&\le N \big( | f|^{p} \big)_{(0,T)\times B_{R}(x_0)}^{1/{p}}+   N {R^{-\sigma}}\sum_{k = 0}^{\infty} 2^{-k\sigma} \big( |u|^p \big)_{(0,T)\times B_{2^kR}(x_0)}^{1/p}, \stepcounter{equation}\tag{\theequation}\label{ioio}
\end{align*}
where $N = N(d,\nu,\Lambda,\alpha,\sigma,p)$.
\end{corollary}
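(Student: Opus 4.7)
The plan is to reduce to the unit scale $R=1$ by translation and parabolic rescaling, apply Lemma~\ref{local} at unit scale, and then decompose the weighted tail $\|\tilde u\|_{L_p((0,T');L_1(\rr^d,\psi))}$ as a dyadic sum over annuli centered at $x_0$.

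After translating so that $x_0=0$, rescale via $\tilde u(t,x)=u(R^{\sigma/\alpha}t,Rx)$ and $\tilde f(t,x)=f(R^{\sigma/\alpha}t,Rx)$. A direct computation using the definition of the Caputo derivative gives $\partial_t^\alpha \tilde u(t,x)=R^\sigma(\partial_t^\alpha u)(R^{\sigma/\alpha}t,Rx)$, and a change of variables inside the kernel integral gives $(Lu)(R^{\sigma/\alpha}t,Rx)=R^{-\sigma}\tilde L\tilde u(t,x)$, where $\tilde L$ has kernel $\tilde K(t,y)=R^{d+\sigma}K(R^{\sigma/\alpha}t,Ry)$ obeying the same two-sided bounds as $K$ with the same constants $\nu,\Lambda$. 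When $\sigma=1$, the dilated principal-value cutoff $\chi^{(1)}(Ry)=1_{|y|\le 1/R}$ is replaced by $1_{|y|\le 1}$ via the remark following Assumption~\ref{aaa1}, since \eqref{sig1} is preserved under the rescaling. Thus $\tilde u\in\hh_{p,0}^{\alpha,\sigma}(T')$ with $T'=TR^{-\sigma/\alpha}$ satisfies $\partial_t^\alpha \tilde u-\tilde L\tilde u+R^\sigma\lambda\tilde u=R^\sigma\tilde f$ on $(0,T')\times\rr^d$, and the analogously rescaled $\tilde L^1$ inherits the upper bound of $L^1$.

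Apply Lemma~\ref{local} to $\tilde u$ with $r=1/2$ and outer radius $1$; using that the constants there are independent of the time horizon, we obtain
\[
\|\partial_t^\alpha\tilde u\|_{L_p(\tilde Q_{1/2})}+\|\tilde L^1\tilde u\|_{L_p(\tilde Q_{1/2})}+R^\sigma\lambda\|\tilde u\|_{L_p(\tilde Q_{1/2})}\le NR^\sigma\|\tilde f\|_{L_p(\tilde Q_1)}+N\|\tilde u\|_{L_p(\tilde Q_1)}+N\|\tilde u\|_{L_p((0,T');L_1(\rr^d,\psi))},
\]
where $\tilde Q_r=(0,T')\times B_r$. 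For the last term, split $\rr^d=B_1\cup\bigcup_{k\ge 1}(B_{2^k}\setminus B_{2^{k-1}})$. On the $k$-th annulus one has $1+|z|^{d+\sigma}\ge c\,2^{k(d+\sigma)}$, so H\"older's inequality in $z$ together with Minkowski's inequality in $L_p(0,T')$ yields
\[
\|\tilde u\|_{L_p((0,T');L_1(\rr^d,\psi))}\le N\sum_{k=0}^\infty 2^{-k(\sigma+d/p)}\|\tilde u\|_{L_p((0,T')\times B_{2^k})},
\]
and the standalone $\|\tilde u\|_{L_p(\tilde Q_1)}$ is absorbed into the $k=0$ summand.

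Finally, scale back via $s=R^{\sigma/\alpha}t$, $y=x_0+Rx$. A Jacobian computation produces $\|\tilde u\|_{L_p(\tilde Q_r)}=R^{-d/p-\sigma/(\alpha p)}\|u\|_{L_p((0,T)\times B_{rR}(x_0))}$ and $\|\partial_t^\alpha\tilde u\|_{L_p(\tilde Q_r)}=R^{\sigma-d/p-\sigma/(\alpha p)}\|\partial_t^\alpha u\|_{L_p((0,T)\times B_{rR}(x_0))}$, with analogous relations for $\tilde L^1\tilde u$ and $\tilde f$. Dividing the resulting inequality by the common factor $R^{\sigma-d/p-\sigma/(\alpha p)}$ and then by the volume $|Q_{R/2}(x_0)|^{1/p}=NT^{1/p}R^{d/p}$ to pass to $L_p$-averages, the dyadic weight $2^{-k(\sigma+d/p)}$ combines with $2^{kd/p}$ from $|B_{2^kR}|^{1/p}/|B_{R/2}|^{1/p}$ to produce $2^{-k\sigma}$, while an overall $R^{-\sigma}$ is pulled out of the sum; this is exactly \eqref{ioio}. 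The main point of care is the scaling bookkeeping and the borderline case $\sigma=1$, where the dilated cutoff must be swapped using \eqref{sig1}, together with verifying the $T$-independence of the constants in Lemma~\ref{local} so that its application on the rescaled interval $(0,T')$ produces a bound uniform in $R$.
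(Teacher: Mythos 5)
Your proof is correct and follows essentially the same route as the paper: translate, parabolically rescale to unit spatial scale, apply Lemma~\ref{local} at radii $1/2 < 1$, and control the weighted tail $\|\cdot\|_{L_p((0,T');L_1(\rr^d,\psi))}$ by a dyadic sum over annuli via Minkowski and H\"older (which is precisely the paper's computation \eqref{opop} written in unnormalized form). The only cosmetic differences are that you take $\tilde u = u(R^{\sigma/\alpha}\cdot, R\cdot)$ without the paper's $R^{-\sigma}$ prefactor, so the $R^\sigma$ factors travel on the right-hand side instead of being absorbed into $\tilde u$, and that you make explicit (correctly) the role of \eqref{sig1} in adjusting the principal-value cutoff when $\sigma=1$; in fact your careful tracking of $\lambda\mapsto \lambda R^\sigma$ under the rescaling fixes a small slip in the paper's displayed rescaled equation, and the bookkeeping you spell out for the Jacobians and the $2^{kd/p}$ factor from the ratio of ball volumes closes the argument exactly as intended.
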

\begin{proof}
By shifting {the coordinates}, without loss of generality, we assume that $x_0 = 0$.

When $R=1$, denoting $p' = p/(p-1)$, {by the Minkowski inequality and H\"older's inequality,} we have
\begin{align*}
&\norm{u}_{L_p((0,T);L_1(\rr^d,\psi))}\\
&=\bigg( \int_{0}^{T} \big(\int_{\rr^d} |u(t,x)| \frac{1}{1+ |x|^{d+\sigma}}\,dx \big)^p\,dt \bigg)^{1/p}\\
&\le\int_{\rr^d} \big(\int_{0}^{T} |u(t,x)|^p \,dt\big)^{1/p} \frac{1}{1+ |x|^{d+\sigma}}\,dx  \\
&\le \sum_{k = 0}^{\infty} \int_{\widehat{B}_{2^k} \setminus \widehat{B}_{2^{k-1}}} \big(\int_{0}^{T} |u(t,x)|^p \,dt\big)^{1/p} \frac{1}{1+ |x|^{d+\sigma}}\,dx  \\
&\le \sum_{k = 0}^{\infty} \bigg(\int_{\widehat{B}_{2^k} \setminus \widehat{B}_{2^{k-1}}} \int_{0}^{T}
\frac{|u(t,x)|^p}{1+ |x|^{d+\sigma}} \,dt \,dx \bigg)^{1/p}  \bigg(\int_{\widehat{B}_{2^k} \setminus \widehat{B}_{2^{k-1}}} \frac{1}{1+ |x|^{d+\sigma}}\,dx \bigg)^{1/p'} \\
&\le T^{1/p} \sum_{k = 0}^{\infty} \bigg(2^{-k\sigma} \fint_{0}^{T} \fint_{B_{2^k}} | u(t,x)|^p \, dx \, dt\bigg)^{1/p} 2^{-k\sigma/p'} \\
&\le N T^{1/p} \sum_{k = 0}^{\infty} 2^{-k\sigma} \big( |u|^p \big)_{(0,T)\times B_{2^k}}^{1/p},\stepcounter{equation}\tag{\theequation}\label{opop}
\end{align*}
where  $\widehat{B}_{2^k} = B_{2^k}$ for $k\ge 0$ and $\widehat{B}_{2^{-1}} = \emptyset$. Thus, when $R=1$, \eqref{ioio} follows from \eqref{es2} and \eqref{opop}.

For general $R>0$, we take
\[\Tilde{u}(t,x) = R^{-\sigma}u(R^{\sigma/\alpha}t,Rx) \qx{and} \Tilde{f}(t,x) = f(R^{\sigma/\alpha}t,Rx) . \stepcounter{equation}\tag{\theequation}\label{we}\]
It follows that
$$\partial_t^{\alpha}\Tilde{u} - \Tilde{L}\Tilde{u} + \lambda \Tilde{u}= \Tilde{f} \qx{in} (0,\Tilde{T})\times \rr^d,$$
where $\Tilde{T} =R^{-\sigma/\alpha}T$, and $\Tilde{L}$ is the operator with the kernel $R^{d+\sigma} K(R^{\sigma/\alpha }t,Ry)$ satisfying \eqref{ass1} with the same $\nu$ and $\Lambda$ as $K$, the kernel of $L$. Since \begin{align*}
&\big( | \partial_t^\alpha \Tilde{u} |^{p} \big)_{(0,\Tilde{T})\times B_{1/2}}^{1/{p}} + \big( |L^1 \Tilde{u} |^{p} \big)_{(0,\Tilde{T})\times B_{1/2}}^{1/{p}} + \lambda\big( | \Tilde{u} |^{p} \big)_{(0,\Tilde{T})\times B_{1/2}}^{1/{p}} \\
&\le N \big( | \Tilde{f}|^{p} \big)_{(0,\Tilde{T})\times B_{1}}^{1/{p}}+   N \sum_{k = 0}^{\infty} 2^{-k\sigma} \big( |\Tilde{u} |^p \big)_{(0,\Tilde{T})\times B_{2^k}}^{1/p},
\end{align*}
we arrive at \eqref{ioio} by a change of variables.
\end{proof}

\subsection{The level set argument}
In this subsection, we prove Theorem \ref{main} for general $p \in (1,\infty)$.
Recall the equation
\[\partial_t^{\alpha}u - Lu + \lambda u = f.\stepcounter{equation}\tag{\theequation}\label{mq} \]

We start with a decomposition of the solution.
\begin{proposition}\label{eees}
Let $\alpha \in (0,1]$, $\sigma\in(0,2)$, $T \in (0, \infty)$, $\lambda \ge 0$ and $p  \in (1, \infty)$.
Suppose that Theorem \ref{main} holds for this $p$, and {$u \in \hh_{p,0}^{\alpha,\sigma}(T)$} satisfies
Equation \eqref{mq}.
Then there exists $p_1 = p_1(d, \alpha,\sigma,p)\in (p,\infty]$
satisfying
\[
p_1 - p > \delta(d,\alpha,\sigma)>0 \stepcounter{equation}\tag{\theequation}\label{p1} ,
\]
and the following holds.
For any $(t_0,x_0) \in [0,T] \times \rr^d$, $R >0 $, and $S = \min\{0, t_0 - R^{\sigma/\alpha}\}$,
there exist
\[
w \in \hh_{p,0}^{\alpha,\sigma}((t_0-R^{\sigma/\alpha}, t_0)\times \rr^d) \qx{and} v \in \hh_{p,0}^{\alpha,\sigma}((S,t_0) \times \rr^d),
\]
such that $u = w + v$ in $Q_R(t_0,x_0)$,
\[
( |L^1 w|^p )_{Q_R(t_0,x_0)}^{1/p}  + ( |\lambda w|^p )_{Q_R(t_0,x_0)}^{1/p} \le N ( |f|^p )_{Q_{2R}(t_0,x_0)}^{1/p}, \stepcounter{equation}\tag{\theequation}\label{www}
\]
\begin{align*}
\big( |L^1 v  |^{p_1} \big)_{Q_{R/2}(t_0,x_0)}^{1/p_1} &\leq N \big( |f|^p \big)_{Q_{2R}(t_0,x_0)}^{1/p} + N \sum_{k = 0}^{\infty} 2^{-k\sigma} \big( |f|^p \big)_{(t_0 -R^{\sigma/\alpha},t_0)\times B_{2^kR}(x_0)}^{1/p} \\
&\quad+  N \sum_{k = 0}^{\infty} 2^{-k\sigma} \big( |L^1 u|^p \big)_{(t_0 -R^{\sigma/\alpha},t_0)\times B_{2^kR}(x_0)}^{1/p}\\
&\quad+  N \sum_{k = 0}^{\infty} 2^{-k\alpha} \big( |L^1 u|^p \big)_{(t_0 - (2^{k+1}+1)R^{\sigma/\alpha},t_0)\times B_{R}(x_0)}^{1/p},
\stepcounter{equation}\tag{\theequation}\label{vvv}
\end{align*}
and
\begin{align*}
\big( |\lambda v  |^{p_1} \big)_{Q_{R/2}(t_0,x_0)}^{1/p_1} &\leq N \big( |f|^p \big)_{Q_{2R}(t_0,x_0)}^{1/p} + N \sum_{k = 0}^{\infty} 2^{-k\sigma} \big( |f|^p \big)_{(t_0 -R^{\sigma/\alpha},t_0)\times B_{2^kR}(x_0)}^{1/p} \\
&\quad+  N \sum_{k = 0}^{\infty} 2^{-k\sigma} \big( |\lambda u|^p \big)_{(t_0 -R^{\sigma/\alpha},t_0)\times B_{2^kR}(x_0)}^{1/p}\\
&\quad+  N \sum_{k = 0}^{\infty} 2^{-k\alpha} \big( |\lambda u|^p \big)_{(t_0 - (2^{k+1}+1)R^{\sigma/\alpha},t_0)\times B_{R}(x_0)}^{1/p},
\stepcounter{equation}\tag{\theequation}\label{vvvt}
\end{align*}
where $N = N(d,\nu,\Lambda,\alpha,\sigma,p)$,
$$
\big( |\,\cdot\, |^{p_1} \big)_{Q_{R/2}(t_0,x_0)}^{1/p_1} := \norm{{\,\cdot\,} }_{L_{\infty}(Q_{R/2}(t_0,x_0))} \qx{when} p_1 = \infty,
$$
and $u,f$ are extended to be zero for $t < 0$.
\end{proposition}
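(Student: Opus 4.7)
The plan follows a ``parabolic freezing'' decomposition adapted to the nonlocal-in-time setting. Set $\tau_0 = t_0 - R^{\sigma/\alpha}$ and let $\zeta$ be a smooth cutoff with $\zeta \equiv 1$ on $Q_R(t_0,x_0)$ and $\supp \zeta \subset Q_{2R}(t_0,x_0)$. Since Theorem~\ref{main} is assumed at the exponent $p$, we can solve
\[
\partial_t^\alpha w - Lw + \lambda w = \zeta f \qx{in} (\tau_0, t_0)\times\rr^d
\]
with zero initial data at $\tau_0$, obtaining $w \in \hh_{p,0}^{\alpha,\sigma}((\tau_0,t_0)\times\rr^d)$. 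The global estimate~\eqref{ess} yields $\|L^1 w\|_{L_p}+\lambda\|w\|_{L_p}\le N\|f\|_{L_p(Q_{2R})}$, whence~\eqref{www} after averaging over $Q_R$. Extending $w$ by zero for $t\le \tau_0$ (permissible because $w(\tau_0,\cdot)=0$) and setting $v := u-w$ on $(S,t_0)\times\rr^d$, we obtain $v \in \hh_{p,0}^{\alpha,\sigma}((S,t_0)\times\rr^d)$ satisfying $\partial_t^\alpha v - Lv + \lambda v = (1-\zeta)f$, which vanishes on $Q_R(t_0,x_0)$.

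The core of the argument is the higher integrability~\eqref{vvv} for $L^1 v$, which I would derive by applying Corollary~\ref{so} to $v$ on the strip $(S,t_0)\times B_R(x_0)$ (translated in time so the initial time is $0$). When $p \le d/\sigma + 1/\alpha$, Corollary~\ref{so}(1) supplies an $L_{p_1}$ bound with $1/p_1 = 1/p - \alpha\sigma/(\alpha d + \sigma)$; otherwise Corollary~\ref{so}(2) supplies $p_1 = \infty$. For $p \ge 2$ these cases together force $p_1 - p \ge \delta(d,\alpha,\sigma)>0$. To transfer the improved integrability from $v$ to $L^1 v$, I would exploit that $K=K(t,y)$, so $L$, $L^1$, and $\partial_t^\alpha$ commute as operators in $x$; thus $L^1 v$ satisfies the same homogeneous equation on $Q_R$, and one can either apply Corollary~\ref{so} directly to $L^1 v$ or combine the higher integrability of $v$ with the interior bound of Corollary~\ref{lr}. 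The estimate~\eqref{vvvt} for $\lambda v$ is derived in parallel, tracking $\lambda u$ in place of $L^1 u$.

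The remaining work is to convert the right-hand side of Corollary~\ref{so} into the precise form of~\eqref{vvv}. The weighted norm $\|v\|_{L_p((S,t_0);L_1(\rr^d,\psi))}$ decomposes via dyadic spatial annuli exactly as in~\eqref{opop}, producing the spatial tail $\sum_k 2^{-k\sigma}$. The $\|(1-\zeta)f\|_{L_p}$ piece (nonzero only on $(S,\tau_0)\times B_R$ when $\tau_0>0$) and the $\|v\|_{L_p((S,t_0)\times B_R)}$ piece, both of which inherit contributions from the ``history'' of $u$, are rewritten using the PDE $f = \partial_t^\alpha u - Lu + \lambda u$, the equivalence $\|Lu\|_{L_p}\lesssim\|L^1 u\|_{L_p}$ from~\eqref{up}, and a dyadic splitting of the Caputo memory kernel $(t-s)^{-\alpha}$ into time scales of size $2^k R^{\sigma/\alpha}$; this is what produces the temporal tail $\sum_k 2^{-k\alpha}(|L^1 u|^p)^{1/p}_{(t_0 - (2^{k+1}+1)R^{\sigma/\alpha}, t_0)\times B_R(x_0)}$ in~\eqref{vvv}, and its analogue with $\lambda u$ in~\eqref{vvvt}. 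Extracting this geometric temporal decay with the correct interval lengths is the most delicate step, reflecting the dyadic structure of the Caputo memory; the spatial tail, by contrast, is essentially routine once~\eqref{opop} is in hand.
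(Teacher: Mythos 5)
Your decomposition of $u=w+v$ via the space-time cutoff $\zeta$ matches the paper's, and the estimate \eqref{www} for $w$ from Theorem~\ref{main} is correct. You also correctly observe that $K=K(t,y)$ lets one take $L^1$ through the equation, which is indeed how the improved integrability is transferred from $v$ to $L^1v$ (the paper does this on the mollified equation). However, the proposal has a genuine gap in the treatment of $v$.

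You propose to apply Corollary~\ref{so} to $v$ (or $L^1v$) directly on the strip $(S,t_0)\times B_R(x_0)$ ``translated in time so the initial time is $0$.'' This has two related problems. First, $t_0-S$ can be as large as $T$ (indeed $S=0$ whenever $t_0 > R^{\sigma/\alpha}$), and the constant in Corollary~\ref{so} --- inherited from the Sobolev embeddings in Lemmas~\ref{a5} and~\ref{aa7} --- depends on the length of the time interval. Applied naively, this yields a $T$-dependent constant, whereas the proposition asserts $N=N(d,\nu,\Lambda,\alpha,\sigma,p)$ with no $T$-dependence. Second, the right-hand side of Corollary~\ref{so} would contain a flat average $\|L^1v\|_{L_p((S,t_0)\times B_R)}$ over the whole history; there is no mechanism in that framework to convert a flat history average into the geometrically decaying sum $\sum_k 2^{-k\alpha}(|L^1u|^p)^{1/p}_{(t_0-(2^{k+1}+1)R^{\sigma/\alpha},t_0)\times B_R}$ required by \eqref{vvv}. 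Your appeal to ``a dyadic splitting of the Caputo memory kernel $(t-s)^{-\alpha}$'' is precisely where the decay should come from, but as described there is no place for that kernel to appear: Corollary~\ref{so} is a Sobolev embedding, not a Caputo identity.

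What is missing is the auxiliary \emph{time} cutoff $\eta$ supported near $t_0$ (with $\eta\equiv1$ on $(t_0-(1/2)^{\sigma/\alpha},t_0)$). The paper passes to $h=\eta v\in\hh^{\alpha,\sigma}_{p,0}((t_0-1,t_0)\times\rr^d)$, so that Corollary~\ref{so} is applied on a time interval of unit length (uniform constant), and the nonlocal-in-time commutator
\[
g(t,x)=\frac{\alpha}{\Gamma(1-\alpha)}\int_S^t(t-s)^{-\alpha-1}\big(\eta(t)-\eta(s)\big)v(s,x)\,ds
\]
is the object whose $(t-s)^{-\alpha-1}$ kernel, split dyadically, produces the $2^{-k\alpha}$ temporal tail. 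Without $\eta$ there is nothing to cut the history into decaying pieces. Finally, a minor technical point: taking $1/p_1 = 1/p - \alpha\sigma/(\alpha d+\sigma)$ fails at the borderline $p=d/\sigma+1/\alpha$ (one gets $p_1=\infty$, but Lemmas~\ref{a5}(2) and~\ref{aa7} do not cover $l=\infty$ or $p=d/\sigma+1/\alpha$); the paper uses the smaller step $1/p_1=1/p-\alpha\sigma/(2\alpha d+2\sigma)$ precisely to stay strictly inside the admissible range.
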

\begin{proof}
The proof of \eqref{vvvt} would be similar to the proof of \eqref{vvv} by replacing $L^1v$ with $\lambda v$. Thus, we focus on \eqref{vvv}.

By a shift of {the coordinates} and a scaling similar to \eqref{we}, without loss of generality, we assume that $x_0 = 0$ and $R = 1$.

For any $t_0 \in [0,T]$, we take a cutoff function $\zeta \in C_0^{\infty}((t_0 - 2^{\sigma/\alpha},t_0 +  2^{\sigma/\alpha})\times B_2)$ satisfying
\[ \zeta\in[0,1], \qx{and } \zeta = 1 \qx{in} (t_0-1,t_0)\times B_1. \]
By Theorem \ref{main}, there exists $w\in \hh^{\alpha,\sigma}_{p,0}((t_0-1,t_0)\times \rr^d)$ satisfying
\[
\partial_t^\alpha w - L w + \lambda w = \zeta f \qx{in} (t_0-1,t_0)\times \rr^d,
\]
and
\[
\norm{|L^1 w| + |\lambda w|}_{L_{p}((t_0-1,t_0)\times \rr^d)}
\leq N \norm{\zeta f}_{L_{p}((t_0-1,t_0)\times \rr^d)} \le N \norm{ f}_{L_{p}(Q_2(t_0,0))}, \stepcounter{equation}\tag{\theequation}\label{ww1}
\]
where $N = N(d, \nu, \Lambda, \alpha,\sigma, p)$. We then obtain the estimate \eqref{www}.

Next, we take $S = \min(0,t_0-1)$ and $v = u-w$. Indeed, by taking the zero extension of $w$ {for $t<t_0-1$}, we have $w\in \hh^{\alpha,\sigma}_{p,0}((S,t_0)\times \rr^d)$. See \cite[Lemma 3.5]{dong19} for details. Thus, it follows that $v\in \hh^{\alpha,\sigma}_{p,0}((S,t_0)\times \rr^d)$
and
\[
\partial_t^\alpha v - Lv + \lambda v= (1 - \zeta) f \qx{in} (S,t_0)\times \rr^d.
\]
Furthermore, we take $\eta\in C^\infty(\rr)$ such that
\[
 \eta(t) = \begin{cases}
1 \quad \quad \text{when}\,\,  t \in  (t_0-(1/2)^{\sigma/\alpha},t_0), \\
0 \quad \quad \text{when}\,\,  t \in (t_0-1,t_0+1)^c,
\end{cases}
{\qx{and } |\eta'| \le N(\alpha,\sigma).\stepcounter{equation}\tag{\theequation}\label{cuto}}
\]
It follows from \cite[Lemma 3.6]{dong19} that $ h: = \eta v \in \hh^{\alpha,\sigma}_{p,0}((t_0-1,t_0)\times \rr^d)$ and
\[
\partial_t^\alpha h - Lh + \lambda h=  \begin{cases} g + \eta(1 - \zeta) f\qx{in} (t_0 -1,t_0)\times \rr^d \quad &\text{when} \,\, \alpha \in (0,1),\\
\eta' v{+ \eta(1 - \zeta) f}  \qx{in} (t_0 -1,t_0)\times \rr^d \quad &\text{when}  \,\, \alpha = 1,
\end{cases}\stepcounter{equation}\tag{\theequation}\label{eqn2}
\]
where
$$
g(t,x) = \frac{\alpha}{\Gamma(1-\alpha)} \int_S^t (t-s)^{-\alpha-1}(\eta(t) - \eta(s)) v(s,x) \, ds.
$$
Moreover, we take
$$\xi \in C_0^\infty(\rr^d), \quad  \supp(\xi)\subset B_1,\quad\int_{\rr^d}\xi =1,\qx{and}\xi^\ep(\cdot) := \ep^{-d}\xi(\cdot/\ep).$$
To estimate $L^1 v$, we first mollify Equation \eqref{eqn2} and then take $L^1$ on both sides {to get}
\[
\partial_t^\alpha (L^1 h^\ep) - L ( L^1  h^\ep) + \lambda L^1  h^\ep= \Tilde{g}^\ep + L^1({\eta}[(1 - \zeta) f]^\ep)\qx{in} (t_0 -1,t_0)\times \rr^d ,
\stepcounter{equation}\tag{\theequation}\label{88889}
\]
where $v^\ep := v \ast_x \xi^\ep $
and
$$
\Tilde{g}^\ep(t,x) = \frac{\alpha}{\Gamma(1-\alpha)} \int_S^t (t-s)^{-\alpha-1}(\eta(t) - \eta(s)) L^1 v^\ep (s,x)\, ds,
$$
{when $\alpha\in (0,1)$ and $\tilde g(t,x)=\eta' Lv^{\varepsilon}$ when $\alpha=1$.}

Next, we take $p_1$ such that
 $$1/p_1= 1/p -\alpha\sigma/(2\alpha d +2 \sigma) \qx{if }p \le d/\sigma + 1/\alpha,$$ and  $$p_1 = \infty \qx{if }p > d/\sigma + 1/\alpha.$$
Note that $p_1$ satisfies \eqref{p1}, i.e., the increment is independent of $p$.
By the fact that $L^1  h^\ep \in \hh^{\alpha,\sigma}_{p,0}((t_0-1,t_0)\times \rr^d)$ and Corollary \ref{so}, we have
\begin{align*}
 \norm{ L^1  h^\ep }_{  L_{p_1} (Q_{1/2}(t_0, 0) ) } &\leq \norm{ L^1  h^\ep }_{  L_{p_1}( (t_0 - 1, t_0) \times B_{1/2} ) } \\
&\le N \norm{|L^1  h^\ep| + |\Tilde{g}^\ep|+ | L^1({\eta}[(1 - \zeta) f]^\ep)|}_{L_p( (t_0 - 1, t_0) \times B_{3/4} ) }\\
&\quad +  N \norm{L^1  h^\ep }_{L_p((t_0-1,t_0);L_1(\rr^d,\psi))},
\end{align*}
which, by taking the limit of $\ep \to 0$, implies that
\begin{align*}
\norm{L^1  h }_{ L_{p_1} (Q_{1/2}( t_0, 0 )) } &\le N \norm{|L^1  h| + |\Tilde{g}|+ | L^1({\eta}[(1 - \zeta) f])|}_{L_p( (t_0 - 1, t_0) \times B_{3/4} ) }\\
&\quad +  N \norm{L^1  h }_{L_p((t_0-1,t_0);L_1(\rr^d,\psi))},\stepcounter{equation}\tag{\theequation}\label{xx23}
\end{align*}
where
$$
\Tilde{g}(t,x) = \frac{\alpha}{\Gamma(1-\alpha)} \int_S^t (t-s)^{-\alpha-1}(\eta(t) - \eta(s)) L^1 v (s,x)\, ds
$$
when $\alpha\in (0,1)$ and $\tilde g(t,x)=\eta' Lv$ when $\alpha=1$.
Then, by the definition of $h$ and \eqref{xx23}, we obtain
\begin{align*}
& \norm{L^1 v}_{ L_{p_1} ( Q_{1/2} (t_0, 0) )   }
= \norm{ L^1  h}_{  L_{p_1} (Q_{1/2}(t_0, 0) ) } \\
&\le N \norm{|L^1  v| + |\Tilde{g}|}_{L_p( (t_0 - 1, t_0) \times B_{1} ) } +  N \norm{L^1((1 - \zeta) f)}_{L_p( (t_0 - 1, t_0) \times B_{3/4} ) }\\
&\quad +  N \norm{L^1 v }_{L_p((t_0-1,t_0);L_1(\rr^d,\psi))}
, \stepcounter{equation}\tag{\theequation}\label{xx1}
\end{align*}
where $N = N(d,\nu,\Lambda,\alpha,\sigma,p,p_1)$.
We estimate the terms on the right-hand side of \eqref{xx1} separately as follows.

When $\alpha \in (0,1)$, a similar computation as in \cite[Proposition 5.1]{dong19} leads to
\begin{align*}
\norm{\Tilde{g}}_{L_p( Q_1(t_0,0)  ) } &\le {N}\sum_{k=0}^\infty 2^{-k\alpha} \big( |L^1 u|^p \big)_{(t_0 - (2^{k+1}+1)^{\sigma/\alpha},t_0)\times B_{1}}^{1/p} \\
&\quad + N\sum_{k=0}^\infty 2^{-k\alpha}\big( |L^1 w|^p \big)_{(t_0 - (2^{k+1}+1)^{\sigma/\alpha},t_0)\times B_{1}}^{1/p}\\
&\le  {N}\sum_{k=0}^\infty 2^{-k\alpha} \big( |L^1 u|^p \big)_{(t_0 - (2^{k+1}+1)^{\sigma/\alpha},t_0)\times B_{1}}^{1/p} + N ( |L^1  w|^p )^{1/p}_{Q_1(t_0,0)}\\
&\le  {N}\sum_{k=0}^\infty 2^{-k\alpha} \big( |L^1 u|^p \big)_{(t_0 - (2^{k+1}+1)^{\sigma/\alpha},t_0)\times B_{1}}^{1/p} +  N ( |f|^p )_{Q_{2}(t_0,0)}^{1/p},\stepcounter{equation}\tag{\theequation}\label{xx2}
\end{align*}
where in the second inequality we used the fact that $w$ vanishes when $t<{t_0-1}$, and in the last inequality, we used \eqref{ww1}. {Recall} that when $\alpha = 1$, $\Tilde{g} = \eta' L^1 v$. Then it is easily seen that
$$
\norm{\Tilde{g}}_{L_p( Q_1(t_0,0)  ) } \le N \big( |L^1 u|^p \big)_{(t_0 - 3^{\sigma},t_0)\times B_{1}}^{1/p} +  N ( |f|^p )_{Q_{2}(t_0,0)}^{1/p}.
$$
Also, by the fact that $1-\zeta$ vanishes in $Q_1(t_0,0)$ together with the Minkowski inequality and H\"older's inequality, we have
\begin{align*}
&\norm{ L^1((1 - \zeta) f)}_{L_p( (t_0 - 1, t_0) \times B_{3/4} )   ) } \\
&\le N  \norm{ \int_{\rr^d} |(1 - \zeta(\cdot,\cdot+y)) f(\cdot,\cdot+y)| |y|^{-d-\sigma} \, dy}_{L_p( (t_0 - 1, t_0) \times B_{3/4} ) )}\\
&\le N \norm{ \int_{|y| > 1/4} |(1 - \zeta(\cdot,\cdot+y)) f(\cdot,\cdot+y)| |y|^{-d-\sigma} \, dy}_{L_p( (t_0 - 1, t_0) \times B_{3/4} ) )}\\
&\le N \norm{ \sum_{k = -1}^{\infty} 2 ^ {-kd-k\sigma}\int_{{B}_{2^k} \setminus {B}_{2^{k-1}}} | f(\cdot,\cdot+y)|  \, dy}_{L_p( (t_0 - 1, t_0) \times B_{3/4} ) )}\\
&\le N \sum_{k = 0}^{\infty} 2^{-k\sigma} \bigg(\fint_{t_0 -1}^{t_0} \fint_{B_{2^k}} |f (t,x)|^p \, dx \, dt \bigg)^{1/p}. \stepcounter{equation}\tag{\theequation}\label{xx4}
\end{align*}

Finally, using an estimate similar to \eqref{opop} together with the fact that $v = u-w$ and \eqref{ww1}, we obtain
\begin{align*}
&\norm{L^1 v}_{L_p((t_0-1,t_0);L_1(\rr^d,\psi))}\\
&\le N \sum_{k = 0}^{\infty} 2^{-k\sigma} \big( |L^1 v|^p \big)_{(t_0 -1,t_0)\times B_{2^k}}^{1/p}\\
&\le N \sum_{k = 0}^{\infty} 2^{-k\sigma} \big( \big( |L^1 u|^p \big)_{(t_0 -1,t_0)\times B_{2^k}}^{1/p}+ \big( |L^1 w|^p \big)_{(t_0 -1,t_0)\times B_{2^k}}^{1/p}\big)\\
&\le N \sum_{k = 0}^{\infty} 2^{-k\sigma} \big( |L^1 u|^p \big)_{(t_0 -1,t_0)\times B_{2^k}}^{1/p} +  N ( |f|^p )_{Q_{2}(t_0,0)}^{1/p}.\stepcounter{equation}\tag{\theequation}\label{xx3}
\end{align*}
Thus, combining \eqref{xx1}, \eqref{xx2}, \eqref{xx4}, and \eqref{xx3}, we arrive at \eqref{vvv} when $R=1$. {The proof of \eqref{vvvt} is similar (and actually simpler) by using
\begin{align*}
\norm{h}_{ L_{p_1} (Q_{1/2}( t_0, 0 )) } &\le N \norm{|h| + |g|}_{L_p( (t_0 - 1, t_0) \times B_{3/4} ) } +  N \norm{h }_{L_p((t_0-1,t_0);L_1(\rr^d,\psi))}
\end{align*}
instead of \eqref{xx23}. We omit the details.}
The proposition is proved.
\end{proof}

Next, with the estimates above, we verify Assumption \ref{ass44} of Lemma \ref{a6}, which is the key to the level set argument.
For $R> 0$, $p\in (1,\infty)$, and $p_1=p_1(d,\alpha,\sigma,p)$ from Proposition \ref{eees},
denote
\begin{align*}
    \aa(s) = \left\{ (t,x) \in (-\infty,T) \times \rr^d: |L^1 u(t,x)| > s \right\},
\end{align*}
and
\begin{align*}
    \cbb_{\gamma}(s) = \big\{ &(t,x) \in (-\infty,T) \times \rr^d:\\
&\gamma^{-1/p}( \cmm |f|^p (t,x) )^{1/p} + \gamma^{-1/p_1}( \css\cmm |L^1 u|^p(t,x))^{1/p} > s  \big\}. \stepcounter{equation}\tag{\theequation}\label{qq4}
\end{align*}
Also, we write
\begin{align*}
\cC_R(t,x) = (t-R^{\sigma/\alpha},t+R^{\sigma/\alpha}) \times B_R(x)\qx{and}
\widehat \cC_R(t,x)=\cC_R(t,x)\cap \{t\le T\}. \stepcounter{equation}\tag{\theequation}\label{qq}
\end{align*}
\begin{lemma}\label{3.3}
Let $\gamma \in (0,1)$, $\alpha \in (0,1]$, $\sigma\in(0,2)$, $T \in (0, \infty)$, $R> 0 $, $\lambda\ge 0$ and $p  \in (1, \infty)$.
Suppose that Theorem \ref{main} holds for this $p$, and {$u \in \hh_{p,0}^{\alpha,\sigma}(T)$} satisfies
Equation \eqref{mq}.
Then, there exists a {sufficiently large} constant $\kappa = \kappa(d,\nu,\Lambda,\alpha,\sigma,p) > 1$ such that for {any} $(t_0,x_0) \in (-\infty,T] \times \rr^d$ and $s>0$, if
\[
|\cC_{\Tilde{R}}(t_0,x_0) \cap \aa(\kappa s)|  \geq \gamma |\cC_{\Tilde{R}}(t_0,x_0)|,\stepcounter{equation}\tag{\theequation}\label{mm1}
\]
then we have
\[
\widehat\cC_{\Tilde{R}}(t_0,x_0) \subset \cbb_{\gamma}(s),
\]
where $\Tilde{R} = 2^{-1-\alpha/\sigma}R$.
\end{lemma}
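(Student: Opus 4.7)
The plan is to argue by contraposition: assume there is a point $(t_1, x_1) \in \widehat\cC_{\Tilde R}(t_0, x_0)$ lying outside $\cbb_\gamma(s)$, and show that the density of $\aa(\kappa s)$ inside $\cC_{\Tilde R}(t_0, x_0)$ is strictly less than $\gamma$ once $\kappa$ is chosen large, contradicting \eqref{mm1}. The bridge between the two sides is the decomposition $u = w + v$ from Proposition~\ref{eees} applied on a slightly enlarged $Q$-cylinder that covers $\widehat\cC_{\Tilde R}(t_0, x_0)$.

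Concretely, set $x_2 = x_0$ and $t_2 = \min(T, t_0 + \Tilde R^{\sigma/\alpha})$. From $\Tilde R = 2^{-1-\alpha/\sigma}R$ one computes $(R/2)^{\sigma/\alpha} = 2\Tilde R^{\sigma/\alpha}$ and $R/2 = 2^{\alpha/\sigma}\Tilde R \ge \Tilde R$, and a short case analysis on whether $t_0 + \Tilde R^{\sigma/\alpha}$ is $\le T$ or $> T$ yields
\[
\widehat\cC_{\Tilde R}(t_0, x_0) \subset Q_{R/2}(t_2, x_2) \subset Q_R(t_2, x_2).
\]
(When $t_2 \le 0$ the function $L^1 u$ vanishes on $\cC_{\Tilde R}(t_0, x_0)$ by the zero extension, so \eqref{mm1} fails and there is nothing to prove.) Applying Proposition~\ref{eees} at the center $(t_2, x_2)$ with radius $R$ yields a decomposition $u = w + v$ on $Q_R(t_2, x_2)$ together with the $L^p$ bound \eqref{www} on $L^1 w$ and the $L^{p_1}$ bound \eqref{vvv} on $L^1 v$.

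Since $(t_1, x_1) \in Q_{R/2}(t_2, x_2)$, every cylinder appearing on the right-hand sides of \eqref{www} and \eqref{vvv} (which are centered at $(t_2, x_2)$ and enlarge $Q_{R/2}$) contains $(t_1, x_1)$. Controlling isotropic $Q$-cylinders by $\cmm$ and non-isotropic ones by $\css\cmm$, summing the geometric series $\sum 2^{-k\sigma}$ and $\sum 2^{-k\alpha}$, and using $(t_1, x_1) \notin \cbb_\gamma(s)$, which gives $(\cmm|f|^p(t_1,x_1))^{1/p} \le \gamma^{1/p} s$ and $(\css\cmm|L^1 u|^p(t_1,x_1))^{1/p} \le \gamma^{1/p_1} s$, one concludes
\[
(|L^1 w|^p)_{Q_R(t_2, x_2)}^{1/p} \le N\gamma^{1/p} s, \qquad (|L^1 v|^{p_1})_{Q_{R/2}(t_2, x_2)}^{1/p_1} \le N\gamma^{1/p_1} s.
\]

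Finally, since $u = w + v$ on $\widehat\cC_{\Tilde R}(t_0, x_0)$, splitting $\{|L^1u|>\kappa s\}$ into $\{|L^1 w|>\kappa s/2\}\cup\{|L^1 v|>\kappa s/2\}$ and applying Chebyshev's inequality in $L^p$ and $L^{p_1}$ respectively gives
\[
|\cC_{\Tilde R}(t_0, x_0) \cap \aa(\kappa s)| \le N\gamma\bigl(\kappa^{-p} + \kappa^{-p_1}\bigr)|Q_{R/2}(t_2, x_2)|,
\]
with the second summand read as $0$ when $p_1 = \infty$ (absorbed into the choice $\kappa > 2N$). Because $|Q_{R/2}(t_2, x_2)|/|\cC_{\Tilde R}(t_0, x_0)| = 2^{d\alpha/\sigma}$ is a fixed constant, choosing $\kappa$ large enough so that $N 2^{d\alpha/\sigma}(\kappa^{-p} + \kappa^{-p_1}) < 1$ contradicts \eqref{mm1}. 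The main obstacle is the geometric setup in the first step — aligning $(t_2, x_2)$ so that the cylinder $Q_{R/2}$ produced by Proposition~\ref{eees} covers $\widehat\cC_{\Tilde R}$ with only a fixed volume loss, and carefully matching each averaged cylinder in \eqref{vvv} against the correct maximal function ($\cmm$ for the isotropic terms, $\css\cmm$ for the non-isotropic ones) — which is exactly what dictates the asymmetric definition of $\cbb_\gamma(s)$ using exponents $p$ and $p_1$.
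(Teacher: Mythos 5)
Your proposal is correct and follows essentially the same route as the paper: normalize $s=1$, dispense with the case $t_0+\Tilde R^{\sigma/\alpha}<0$ using the zero extension, set $t_1=\min(t_0+\Tilde R^{\sigma/\alpha},T)$, note $\widehat\cC_{\Tilde R}(t_0,x_0)\subset Q_{R/2}(t_1,x_0)$, apply Proposition~\ref{eees} at $(t_1,x_0)$, bound all the averaged terms in \eqref{www}--\eqref{vvv} by the maximal functions of $|f|^p$ and $|L^1u|^p$ at the assumed good point, and finish with Chebyshev. The only genuine deviation is in the last step: you split $\{|L^1u|>\kappa s\}$ symmetrically into $\{|L^1w|>\kappa s/2\}\cup\{|L^1v|>\kappa s/2\}$ and let $\kappa\to\infty$ kill both $\kappa^{-p}$ and $\kappa^{-p_1}$, whereas the paper first fixes a constant $C_1$ to make the $v$-contribution small via $\{|L^1v|>C_1\}$ and only then sends $\kappa\to\infty$ to control $\{|L^1w|>\kappa-C_1\}$. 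Both closures are valid (your version is slightly leaner, since $p_1>p>1>0$ guarantees both powers of $\kappa$ decay), and the remaining bookkeeping — the volume ratio $|Q_{R/2}|/|\cC_{\Tilde R}|=2^{d\alpha/\sigma}$, the matching of isotropic cylinders to $\cmm$ and non-isotropic ones to $\css\cmm$, and the summation of the geometric tails — matches the paper.
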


\begin{proof}
By dividing Equation {\eqref{mq}} by $s$, we assume that $s=1$.

First note that since $u$ vanishes when $t<0$, if $t_0 + \Tilde{R}^{\sigma/\alpha}< 0$, then
$$
\cC_{\Tilde{R}}(t_0,x_0) \cap \aa(\kappa) \subset \left\{ (t,x) \in (-\infty,0) \times \rr^d: |{L^1 u(t,x)}| > 1 \right\} = \emptyset,
$$
and \eqref{mm1} is not satisfied. Thus, it suffices to consider {the case when} $t_0 +  \Tilde{R}^{\sigma/\alpha}\geq 0$.

We argue by contradiction. Suppose that there exist some $(s,y) \in \widehat\cC_{\Tilde{R}}(t_0,x_0)$
such that
\[
\gamma^{-1/p}( \cmm |f|^p (s,y) )^{1/p} + \gamma^{-1/p_1} ( \css\cmm |L^1 u|^p(s,y))^{1/p} \leq 1. \stepcounter{equation}\tag{\theequation}\label{zz1}
\]
Let $t_1 := \min \{ t_0 + \Tilde{R}^{\sigma/\alpha}, T\}$. Then by Proposition \ref{eees}, for $S=\min\{0,t_1-R^{\sigma/\alpha}\}$, there exist $w \in \hh_{p,0}^{\alpha,\sigma}((t_1-R^{\sigma/\alpha}, t_1) \times \rr^d)$ and $v \in \hh_{p,0}^{\alpha,\sigma}((S, t_1) \times \rr^d)$ such that $u = w + v$ in $Q_R(t_1,x_0)$,
\begin{align*}
( |L^1 w|^p )_{Q_R(t_1,x_0)}^{1/p} \le N ( |f|^p )_{Q_{2R}(t_1,x_0)}^{1/p}, \stepcounter{equation}\tag{\theequation}\label{zz2}
\end{align*}
and
\begin{align*}
\big( |L^1 v  &|^{p_1} \big)_{Q_{R/2}(t_1,x_0)}^{1/p_1}
\leq N \big( |f|^p \big)_{Q_{2R}(t_1,x_0)}^{1/p} \\
&+  N \sum_{k = 0}^{\infty} 2^{-k\sigma} \bigg(\fint_{t_1 -R^{\sigma/\alpha}}^{t_1} \fint_{B_{2^kR}(x_0)} |f (t,x)|^p \, dx \, dt \bigg)^{1/p}\\
&+ N \sum_{k=0}^\infty 2^{-k\alpha} \bigg( \fint^{t_1}_{t_1 - (2^{k+1}+1)R^{\sigma/\alpha}} \fint_{B_R(x_0)} |L^1 u(t,y)|^p \, dy \,dt \bigg)^{1/p}\\
& +  N \sum_{k = 0}^{\infty} 2^{-k\sigma} \bigg(\fint_{t_1 -R^{\sigma/\alpha}}^{t_1} \fint_{B_{2^kR}(x_0)} | L^1 u(t,y)|^p \, dx \, dt \bigg)^{1/p}\stepcounter{equation}\tag{\theequation}\label{zz3},
\end{align*}
where $N = N(d,\nu,\Lambda,\alpha,\sigma,p)$.
We have
$$
(s,y) \in \widehat\cC_{\Tilde{R}}(t_0,x_0)
\subset Q_{R/2}(t_1,x_0) \subset Q_{2R}(t_1,x_0),
$$
$$
(s,y) \in \widehat\cC_{\Tilde{R}}(t_0,x_0) \subset (t_1- (2^{k+1}+1)R^{\sigma/\alpha}, t_1) \times B_R(x_0),
$$
and
$$
(s,y) \in \widehat\cC_{\Tilde{R}}(t_0,x_0) \subset (t_1- R^{\sigma/\alpha}, t_1) \times B_{2^kR}(x_0)
$$
for all $k = 0,1,\ldots$.
By these set inclusions together with \eqref{zz1}, \eqref{zz2}, and \eqref{zz3}, we infer
$$
( |L^1 v|^{p_1} )_{Q_{R/2}(t_1,x_0)}^{1/p_1}  \leq N \gamma^{1/p_1} \qx{and} (|L^1 w|^p)^{1/p}_{Q_R(t_1,x_0)} \leq N\gamma^{1/p},
$$
where $N = N(d,\nu,\Lambda,\alpha,\sigma,p)$.
Then for a constant $C_1> 0 $ to be determined, by the Chebyshev inequality,
\begin{align*}
&|\cC_{\Tilde{R}}(t_0,x_0) \cap \aa(\kappa)|\\
&= |\{(t,x) \in \cC_{\Tilde{R}}(t_0,x_0), t \in (-\infty,T): |L^1 u(t,x)| > \kappa\}|\\
&\leq \left|\{ (t,x) \in Q_{R/2}(t_1,x_0): |L^1 u(t,x)| > \kappa\}\right|\\
&\leq \left|\{(t,x) \in Q_{R/2}(t_1,x_0): |L^1 w(t,x)| > \kappa - C_1 \}\right|\\
&\quad + \left|\{(t,x) \in Q_{R/2}(t_1,x_0): |L^1 v(t,x)| > C_1 \}\right|\\
&\leq\int_{Q_{R/2}(t_1,x_0)} (\kappa-C_1)^{-p}|L^1 w|^p +  C_1^{-p_1} |L^1 v|^{p_1}\, dx \, dt \\
&\leq \frac{N^p\gamma |Q_{R/2}|}{(\kappa - C_1)^p} + \frac{N^{p_1}\gamma|Q_{R/2}|}{C_1^{p_1}}1_{p_1 \neq \infty}\\
&\leq N_0(d,\alpha, \sigma) |\cC_{\Tilde{R}}(t_0,x_0)|\gamma \bigg(\frac{N^p  }{(\kappa - C_1)^p} + \Big(\frac{N}{C_1}\Big)^p 1_{p_1 \neq \infty} \bigg).
\end{align*}
By first taking a sufficiently large $C_1 = C_1(d,\nu,\Lambda,\alpha,\sigma,p)$ such that
$$
N_0  (N/C_1)^p < 1/2,
$$
and then taking {a large} $\kappa = \kappa(d,\nu,\Lambda,\alpha,\sigma,p)$ so that
$$
N_0 N^p/(\kappa-C_1)^p < 1/2,
$$
we obtain
\begin{align*}
|\cC_{\Tilde{R}}(t_0,x_0) \cap \aa(\kappa)|
 < \gamma |\cC_{\Tilde{R}}(t_0,x_0)|. \stepcounter{equation}\tag{\theequation}\label{q8}
\end{align*}
However, \eqref{q8} contradicts to \eqref{mm1}.
The lemma is proved.
\end{proof}
\begin{remark}\label{lam}
By replacing $L^1u$ with $\lambda u$ in the above proof together with \eqref{vvvt}, we conclude that Lemma \ref{3.3} holds for
\begin{align*}
    \aa'(s) = \left\{ (t,x) \in (-\infty,T) \times \rr^d: |\lambda u(t,x)| > s \right\},
\end{align*}
and
\begin{align*}
    \cbb'_{\gamma}(s) = \big\{ &(t,x) \in (-\infty,T) \times \rr^d:\\
&\gamma^{-1/p}( \cmm |f|^p (t,x) )^{1/p} + \gamma^{-1/p_1}( \css\cmm |\lambda u|^p(t,x))^{1/p} > s  \big\}.
\end{align*}
\end{remark}

We are ready to prove Theorem \ref{main} for general $p \in (1,\infty)$.
\begin{proof}[Proof of Theorem \ref{main}]
Since the existence of solutions for the equation
$$
\partial_t^\alpha u + \pf u = f
$$
was derived in \cite[Theorem 2.8]{kim20}, by the method of continuity, it suffices to prove the a priori estimate \eqref{ess}. Note that by the argument in Remark \ref{2.3}, if \eqref{ess} holds for smooth functions, then \eqref{up} holds for smooth functions. Thus, by the density of smooth functions in $\hh_{p,0}^{\alpha,\sigma}(T)$, without loss of generality, we assume that $u \in C_0^\infty([0,T] \times \rr^d)$ with $u(0,\cdot) = 0$.

We first consider the case when $p\in [2,\infty)$ by using an iterative argument to successively increase the exponent $p$, which is referred to as the bootstrap argument. Recall that we have proved the base case when $p =2$. Now we assume that the theorem holds for some $p_0\in [2,\infty)$, and we prove \eqref{ess} for $p\in (p_0,p_1)$, where $p_1= p_1(d,\alpha,\sigma,p_0)$ is from Proposition \ref{eees}.
Note that
\begin{align*}
    \|L^1 u\|_{L_p(\rr^d_T)}^p = p \int_0^\infty |\aa(s)| s^{p-1} \, ds = p
\kappa^p \int_0^\infty |\aa(\kappa s)| s^{p-1} \, ds,
\end{align*}
and Lemma \ref{3.3} together with Lemma \ref{a6} leads to
\[
|\aa(\kappa s)| \leq N(d,\alpha) \gamma|\cbb_{\gamma}(s)|
\]
for all $s \in (0,\infty)$, where $\kappa = \kappa(d,\nu,\Lambda,\alpha,\sigma,p)$ is from Lemma \ref{3.3} and $\aa, \cbb_{\gamma}$ are defined in \eqref{qq4}.
Thus, by the Hardy-Littlewood theorem {for strong maximal functions},
\begin{align*}
&\|L^1 u\|_{L_p(\rr^d_T)}^p
\leq N p  \kappa^p \gamma \int_0^\infty |\cbb_{\gamma}(s)| s^{p-1} \, ds\\
&\le N\gamma \int_0^\infty\left|\left\{ (t,x) \in (-\infty,T) \times \rr^d:\gamma^{-\frac 1{ p_1}}( \css\cmm |L^1 u|^{p_0}(t,x))^{\frac 1 {p_0}} > s/2 \right\}\right| s^{p-1} \, ds\\
&\quad +
N\gamma \int_0^\infty\left|\left\{ (t,x) \in (-\infty,T) \times \rr^d:\gamma^{-\frac 1 {p_0}}( \cmm |f|^{p_0} (t,x) )^{\frac 1 {p_0}} > s/2 \right\}\right| s^{p-1} \, ds\\
&\leq N \gamma^{1-p/p_1} \|L^1 u\|^p_{L_p(\rr^d_T)} + N \gamma^{1-p/p_0} \|f\|^p_{L_p(\rr^d_T)},
\end{align*}
where $N = N(d,\nu,\Lambda,\alpha,\sigma,p)$. Similarly, by Remark \eqref{lam}, we have
\[
|\aa'(\kappa s)| \leq N(d,\alpha) \gamma|\cbb'_{\gamma}(s)|,
\]
which implies that
\begin{align*}
&\|\lambda u\|_{L_p(\rr^d_T)}^p \leq N \gamma^{1-p/p_1} \|\lambda u\|^p_{L_p(\rr^d_T)} + N \gamma^{1-p/p_0} \|f\|^p_{L_p(\rr^d_T)}.
\end{align*}
By the fact that $p< p_1$, we take a sufficiently small $\gamma \in (0,1)$ so that
$$
N \gamma^{1-p/p_1} < 1/2,
$$
which implies \eqref{ess} for $p \in (p_0,p_1)$.
We repeat this procedure.
Recall that $p_1 - p $ depends only on $d, \alpha, \sigma$. Thus in finite steps, we get a $p_0$ which is larger than $d/2 + 1/\alpha$, so that $p_1=p_1(d,\alpha,p_0)=\infty$. Therefore, the theorem is proved for {any} $p\in [2,\infty)$.

For $p \in (1,2)$, we use a duality argument. Again, we assume that $u \in C_0^\infty([0,T] \times \rr^d)$ with $u(0,x) = 0$ and prove \eqref{ess}. Let $L^{\ast}$ be the operator with the kernel $K(-t,-y)$, where $K$ is the kernel of $L$. For $p' = p/(p-1)$ and $\phi \in L_{p'}(\rr^d_T)$, there exist $w \in \hh_{p',0}^{\alpha,\sigma}((-T,0) \times \rr^d)$ satisfying
$$
\partial_t^\alpha w - L^{\ast} w + \lambda w = \phi(-t,x) \qx{in} (-T,0) \times \rr^d
$$
and
$$
\||L^1 w| + |\lambda w|\|_{L_{p'}((-T,0) \times \rr^d)} \leq N \|\phi(-t,x)\|_{L_{p'}((-T,0) \times \rr^d)} = N \|\phi\|_{L_{p'}(\rr^d_T)},
$$
where $\partial_t^\alpha w = \partial_t I_{-T}^{1-\alpha} w$ and $N =N = N(d, \nu, \Lambda, \alpha,\sigma, p)$.
It follows that
\begin{align*}
&\int_0^T \int_{\rr^d} \phi L^1 u  \, dx \, dt = \int_{-T}^0 \int_{\rr^d} \phi(-t,x)L^1 u(-t,x) \, dx \, dt\\
&= \int_{-T}^0 \int_{\rr^d} (\partial_t^\alpha w(t,x) - L^{\ast} w (t,x)) L^1 u(-t,x) \, dx \, dt\\
&= \int_0^T \int_{\rr^d} ( \partial_t^\alpha u (t,x)- Lu  (t,x)) (L^1)^{{*}} w(-t,x) \, dx \, dt\\
&= \int_0^T \int_{\rr^d} f(t,x)(L^1)^{{*}} w(-t,x) \, dx \, dt \leq N\|f\|_{L_p(\rr^d_T)} \|\phi\|_{L_{p'}(\rr^d_T)}.
\stepcounter{equation}\tag{\theequation}\label{zzz2}\end{align*}
Note that for the third equality, if $w$ is smooth, then we apply the Plancherel theorem to the integral of $ L^{\ast} w{(t,x)}L^1 u{(-t,x)}$ and apply an integration by parts to {the integral of} $\partial_t^\alpha w{(t,x)} L^1 u{(-t,x)}$ using the zero initial condition{s} of $w$ and $u$.  If $w$ is not necessarily smooth, we take $w_k \in C_0^\infty([-T,0] \times \rr^d)$ with $w_k(-T,0) = 0$ such that
$$
w_k \to w\quad \text{in}\,\, \hh_{{p'},0}^{\alpha,\sigma}((-T,0) \times \rr^d ).
$$
Similarly, we have
\begin{align*}
\int_0^T \int_{\rr^d} \phi \lambda u  \, dx \, dt
= \int_0^T \int_{\rr^d} f(t,x)\lambda w(-t,x) \, dx \, dt \leq N\|f\|_{L_p(\rr^d_T)} \|\phi\|_{L_{p'}(\rr^d_T)}.
\stepcounter{equation}\tag{\theequation}\label{zzz3}\end{align*}
Thus, by \eqref{zzz2} and \eqref{zzz3}, we arrive at \eqref{ess}. {As before, by the method of continuity, the solvability and thus theorem are proved for the remaining case when $p\in (1,2)$.}
\end{proof}

\section{Equations in \texorpdfstring{$L_{p,q,w}$ when $\alpha = 1$}p}\label{4}
In this section, we prove Theorem \ref{main2} by deriving a mean oscillation estimate of $u \in \hh_{p_0}^{1, \sigma} ( (-\infty,T)\times \rr^d )$ satisfying
\[
\partial_t u - L u + \lambda u= f
\quad \text{in}\,\,  (-\infty,T)\times \rr^d.
\]
In particular, for $t_0 \in (-\infty, T]$, we {are going to} decompose
$u =w + v$ in $(t_0-1,t_0) \times \rr^d$ and estimate them separately,
where $w \in \hh_{p_0, 0}^{1, \sigma} ( (t_0-1,t_0) \times \rr^d ) $ satisfies
\[
\partial_t w - L w + \lambda w=  f
\quad \text{in}\,\,  (t_0-1,t_0) \times \rr^d \stepcounter{equation}\tag{\theequation}\label{9998}
\]
and
\begin{align*}
\partial_t v - L v + \lambda v = 0 \quad \quad \text{in}\,\,  (t_0-1,t_0) \times \rr^d.
\stepcounter{equation}\tag{\theequation}\label{9999}
\end{align*}
Note that the decomposition above differs from the one in the previous section.

Throughout this section, We denote $L^1$ to be an operator satisfying \eqref{sig1} {when $\sigma=1$} and \eqref{ass}. Furthermore, for $r>0$ and $z=(z^1,\ldots,z^d)\in \rr^d$, the cube center{ed} at $z$ {with radius $r$} is defined as $$C_r(z):= (z^1-r/2,z^1+r/2) \times (z^2-r/2,z^2+r/2) \times \ldots \times (z^d-r/2,z^d+r/2).$$

We start with the estimate of $v$. In the following proposition, using the Sobolev embedding and an iteration argument, we bound the H\"older semi-norm of $L^1 v$, in particular $\pf v$, by a sum of its $L_{p_0}$ norms taken over cubes of fixed size centered at integer points. {For the convenience of computation, we use cubes instead of balls in the spatial coordinates. Note that since ${B_{r/2}\subset} C_r \subset B_{\sqrt{d}r/2}$, we can replace the norms taken over balls with norms taken over cubes in Corollary \ref{so}.}

\begin{proposition}\label{ho}
Let $\sigma\in(0,2)$, $p_0  \in (1, \infty)$, $t_0\in \rr$, and $v \in  \hh_{p_0}^{1, \sigma} ( (-\infty,t_0) \times \rr^d )$ satisfy \eqref{9999}. Then for any $r>0$,
\begin{enumerate}
\item for any $p\in (p_0,\infty)$ and $x_0\in \rr^d$,
we have
\begin{align*}
( | L^1 v|^{p} )^{1 /p}_{Q_{r/2} (t_0, x_0) }
\le     N \sum_{z \in \zz^d}  (1 + |z|^{d+\sigma})^{-1}\big( |L^1 v|^{p_0} \big)_{(t_0 -r^\sigma,t_0)\times C_r(z+x_0)}^{1/{p_0}}\stepcounter{equation} \tag{\theequation}\label{homo2}
\end{align*}
and
\begin{align*}
( | \lambda v|^{p} )^{1 /p}_{Q_{r/2} (t_0, x_0) }
\le     N \sum_{z \in \zz^d}  (1 + |z|^{d+\sigma})^{-1}\big( |\lambda v|^{p_0} \big)_{(t_0 -r^\sigma,t_0)\times C_r(z+x_0)}^{1/{p_0}},\stepcounter{equation} \tag{\theequation}\label{homoo2}
\end{align*}
where $N = N(d,\nu,\Lambda,\sigma,p_0,p)$.
\item For any $x_0\in \rr^d$, there exists $\tau = \tau (d,\sigma )\in(0,1)$ such that
\begin{align*}
[L^1 v]_{C^{\tau/\sigma, \tau} ( Q_{r/2} (t_0, x_0) ) }\leq  N r^{-\tau} \sum_{z \in \zz^d}  (1 + |z|^{d+\sigma})^{-1}\big( |L^1 v|^{p_0} \big)_{(t_0 -r^\sigma,t_0)\times C_r(z+x_0)}^{1/{p_0}}\stepcounter{equation} \tag{\theequation}\label{homo}
\end{align*}
and
\begin{align*}
[\lambda v]_{C^{\tau/\sigma, \tau} ( Q_{r/2} (t_0, x_0) ) }\leq  N r^{-\tau} \sum_{z \in \zz^d}  (1 + |z|^{d+\sigma})^{-1}\big( |\lambda v|^{p_0} \big)_{(t_0 -r^\sigma,t_0)\times C_r(z+x_0)}^{1/{p_0}}\stepcounter{equation} \tag{\theequation}\label{homoo},
\end{align*}
where $N = N(d,\nu,\Lambda,\sigma,p_0)$.
\end{enumerate}
\end{proposition}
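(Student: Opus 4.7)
\emph{Proof proposal.} By a translation we may assume $t_0=0$ and $x_0=0$. A standard parabolic rescaling $\tilde v(t,x)=r^{-\sigma}v(r^\sigma t, rx)$, which preserves the structural bounds of the kernel (cf.\ Corollary \ref{lr}) and rescales $\zz^d$ to $r^{-1}\zz^d$, reduces the proof to the case $r=1$. The key structural observation is that both $L$ and $L^1$ have kernels independent of $x$, so they are Fourier multipliers in $x$ and hence commute with each other and with $\partial_t$. Applying $L^1$ to the homogeneous equation $\partial_t v - Lv + \lambda v = 0$, and using \eqref{up} to see that $u:=L^1 v$ lies in $L_{p_0}((-\infty,0)\times\rr^d)$, we obtain
\[
\partial_t u - Lu + \lambda u = 0 \quad \text{in}\quad(-\infty,0)\times\rr^d.
\]

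\emph{Bootstrap in the integrability exponent.} Starting from $p_0$, construct a finite sequence $p_0<p_1<\cdots<p_N$ with $p_N>d/\sigma+1$ (which triggers the H\"older regime of Corollary \ref{so}(2)), each increment governed by the Sobolev relation $1/p_{i+1}=1/p_i-\sigma/(d+\sigma)$ from Corollary \ref{so}(1); the number of steps $N$ depends only on $d,\sigma,p_0$. Choose radii $1=R_0>R_1>\cdots>R_N=1/2$ with uniform gaps. At each step $i<N$ apply Corollary \ref{so}(1) to $u$ on the pair of cylinders of radii $R_i,R_{i+1}$ --- the forcing term vanishes since $u$ satisfies the homogeneous equation --- and at the final step $N$ apply Corollary \ref{so}(2). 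Telescoping yields a H\"older estimate of the form
\[
[L^1 v]_{C^{\tau/\sigma,\tau}(Q_{1/2})} \le N\big(|L^1 v|^{p_0}\big)^{1/p_0}_{Q_1} + N\sum_{i=0}^{N-1}\norm{L^1 v}_{L_{p_i}((-R_i^\sigma,0);L_1(\rr^d,\psi))}
\]
for some $\tau=\tau(d,\sigma)\in(0,1)$.

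\emph{Tail reduction to exponent $p_0$.} To recast each $L_{p_i}$ tail in terms of $L_{p_0}$ sums, decompose $\rr^d=\bigsqcup_{z\in\zz^d}C_1(z)$ and use the bound $\psi(x)\le N(1+|z|^{d+\sigma})^{-1}$ for $x\in C_1(z)$, combined with Minkowski's inequality in time and H\"older's inequality on the unit-volume cube $C_1(z)$, to obtain
\[
\norm{L^1 v}_{L_{p_i}((-R_i^\sigma,0);L_1(\rr^d,\psi))} \le N\sum_{z\in\zz^d}(1+|z|^{d+\sigma})^{-1}\big(|L^1 v|^{p_i}\big)^{1/p_i}_{(-R_i^\sigma,0)\times C_1(z)}.
\]
At this point the strip hypothesis becomes essential: because $u$ solves the homogeneous equation on the full strip $(-\infty,0)\times\rr^d$ rather than in one cylinder, Corollary \ref{so}(1) can be re-applied with spatial center shifted to an arbitrary $z\in\zz^d$, bounding the $p_i$-average over $C_1(z)$ by a $p_{i-1}$-average over $C_{3/2}(z)$ plus a new tail centered at $z$. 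Iterating downward through the bootstrap and absorbing the $O(1)$ enlargement of cubes into the geometric weights $(1+|z|^{d+\sigma})^{-1}$ collapses every tail to exponent $p_0$, producing \eqref{homo}.

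\emph{Conclusion and the parallel estimates.} Estimate \eqref{homo2} for any $p\in(p_0,\infty)$ is immediate from \eqref{homo} via $C^{\tau/\sigma,\tau}(Q_{1/2})\hookrightarrow L^\infty(Q_{1/2})\hookrightarrow L^p(Q_{1/2})$. The companion bounds \eqref{homoo} and \eqref{homoo2} for $\lambda v$ are obtained by the identical argument taking $u=\lambda v$ in place of $L^1 v$; the commutator step is trivial because $\lambda$ is a constant. The principal technical obstacle will be the tail-telescoping described in the previous paragraph: one must ensure that the geometric weights $(1+|z|^{d+\sigma})^{-1}$ are faithfully propagated as the spatial center is shifted to each lattice point $z$, without the constants deteriorating with $|z|$, and that the modest enlargement of cubes at each recursive step can be absorbed into these same weights.
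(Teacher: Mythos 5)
Your overall strategy—bootstrapping the integrability exponent through Corollary~\ref{so}, then closing with the H\"older embedding, while exploiting that the equation holds on the full strip so the spatial center can be shifted to any lattice point and the weights $(1+|z|^{d+\sigma})^{-1}$ recombine via the convolution-type estimate—is essentially the paper's argument. Two points, however, need attention.

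First, the claim that ``the forcing term vanishes since $u$ satisfies the homogeneous equation'' is false, and this is not a cosmetic issue. Corollary~\ref{so} (via Lemma~\ref{local}) requires $u|_{(0,T)\times B_R}\in\hh_{p,0}^{\alpha,\sigma}((0,T)\times B_R)$, i.e., that the function admits approximation by smooth functions vanishing at the initial time of the cylinder. The solution $v$ (and hence $L^1 v$) is given on $(-\infty,t_0)$ with no vanishing at $t=t_0-1$, so you cannot invoke Corollary~\ref{so} directly. The paper introduces the temporal cutoff $\eta$ of \eqref{cuto} and works with $\eta L^1 v$, which does satisfy the zero initial condition but now solves
\[
\partial_t(\eta L^1 v)-L(\eta L^1 v)+\lambda(\eta L^1 v)=\eta' L^1 v,
\]
a nonzero right-hand side supported where $\eta'$ lives. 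The forcing is controlled by $L_{p_0}$-norms of $L^1 v$ on a slightly earlier strip of time—which fits into the quantities you are already tracking—but you must carry it, not drop it. (Incidentally, this is the step the paper flags in Remark~\ref{657} as the reason the argument is restricted to $\alpha=1$: the time cutoff would produce a much messier term when $\alpha<1$.)

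Second, deriving \eqref{homo2} from \eqref{homo} via $C^{\tau/\sigma,\tau}\hookrightarrow L^\infty\hookrightarrow L^p$ is not quite right as written: the H\"older seminorm controls oscillation but not the function's magnitude, so you must add a zeroth-order term like $(|L^1 v|^{p_0})^{1/p_0}_{Q_{1/2}}$ before passing to $L^\infty$. This is harmless—the extra term sits inside the lattice sum—but should be made explicit. In the paper the two estimates are obtained separately (stop the bootstrap early for \eqref{homo2}, push to the H\"older range for \eqref{homo}), which avoids the issue.
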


\begin{proof}
The proofs of \eqref{homoo2} and \eqref{homoo} are similar to the proofs of \eqref{homo2} and \eqref{homo} by replacing $L^1v$ with $\lambda v$. Thus, we focus on \eqref{homo2} and \eqref{homo}.

By scaling and shifting the coordinates, we assume that $r=1$ and $x_0 = 0$.
Moreover, for $m = 1,2,\ldots$, we take
$p_{m} = p_{m}(d,\sigma,p_0)$
satisfying
\[
1/p_m= 1/p_0 -m\sigma/( d + \sigma) \stepcounter{equation} \tag{\theequation}\label{ip2}
\]
whenever the right-hand side is positive.

We first prove \eqref{homo2}. We take cutoff function $\eta\in C^\infty(\rr)$ satisfying \eqref{cuto}.
It follows that $\eta v \in \hh^{1,\sigma}_{{p_0},0}((t_0-1,t_0)\times \rr^d)$ and
\[
\partial_t( \eta v) - L(\eta v) + \lambda (\eta v) = \partial_t( \eta v) -\eta \partial_t  v = v  \eta' \qx{in} (t_0 -1,t_0)\times \rr^d .\stepcounter{equation} \tag{\theequation}\label{y6}
\]
Taking $L^1$ on both sides of \eqref{y6} leads to
\[
\partial_t (\eta L^1 v) - L(\eta L^1 v) + \lambda (\eta L^1 v) = L^1 v \eta' \qx{in}(t_0 -1,t_0)\times \rr^d.\stepcounter{equation} \tag{\theequation}\label{y7}
\]
Note that if $v$ is not regular enough, we can first mollify the equation and then take the limit of the estimate as in \eqref{88889}.
Moreover, note that by the Minkowski inequality
\begin{align*}
&\norm{L^1 v}_{L_{p_0}((t_0-1,t_0);L_1(\rr^d,\psi))}\\
&=\bigg( \int_{t_0-1}^{t_0} \big(\int_{\rr^d} |L^1 v(t,x)| \frac{1}{1+ |x|^{d+\sigma}}\,dx \big)^{p_0}\,dt \bigg)^{1/p_0}\\
&\le\int_{\rr^d} \big(\int_{t_0-1}^{t_0} |L^1 v(t,x)|^{p_0} \,dt\big)^{1/p_0} \frac{1}{1+ |x|^{d+\sigma}}\,dx  \\
&\le \sum_{z \in \zz^d} \int_{C_1(z)} \big(\int_{t_0-1}^{t_0} |L^1 v(t,x)|^{p_0} \,dt\big)^{1/p_0} \frac{1}{1+ |x|^{d+\sigma}}\,dx  \\
&\le N(d,\sigma,{p_0})\sum_{z\in \zz^d} (1 + |z|^{d+\sigma})^{-1} \bigg( \fint_{t_0 -1}^{t_0} \fint_{C_1(z)} | L^1 v(t,x)|^{p_0} \, dx \, dt\bigg)^{1/{p_0}} \\
&\le N \sum_{z \in \zz^d}  (1 + |z|^{d+\sigma})^{-1}\big( |L^1 v|^{p_0} \big)_{(t_0 -1,t_0)\times C_1(z)}^{1/{p_0}}.
\end{align*}
Hence, by applying Corollary \ref{so} to \eqref{y7}, we have
\begin{align*}
&\norm{L^1 v}_{ L_{p_1} ( Q_{1/2} (t_0, 0) )   }\\
&\leq \norm{\eta L^1  v}_{  L_{p_1}( (t_0 - 1, t_0) \times B_{1/2} ) } \\
&\le N \norm{L^1  v}_{L_{p_0}( (t_0 - 1, t_0) \times B_{1} ) }+  N \norm{L^1 v }_{L_{p_0}((t_0-1,t_0);L_1(\rr^d,\psi))}\\
&\le  N \sum_{z \in \zz^d}  (1 + |z|^{d+\sigma})^{-1}\big( |L^1 v|^{p_0} \big)_{(t_0 -1,t_0)\times C_1(z)}^{1/{p_0}},
\stepcounter{equation}\tag{\theequation}\label{8888}
\end{align*}
where $N = N(d,\nu,\Lambda,\sigma,p_0)$. If $p\le p_1$, then we arrive at \eqref{homo2} by H\"older's inequality.

Otherwise, note that with a minor modification of the above {estimates}, we conclude that $$v,\, \pf v,\, \partial_t v \in L_{p_1} ((t_0-1/2^\sigma,t_0) \times  B_{3/4}).$$
Furthermore, we take a cutoff function
$$\zeta \in C_0^\infty(\rr^d), \quad \zeta = 1 \qx{in} B_{1/2}, \qx{and }\zeta = 0 \qx{in} B^c_{3/4}.$$ By taking the mollification (in both the time and the spatial variables) of $\zeta v$ as the defining sequence, we conclude that
\[\zeta v \in \hh_{p_1}^{1, \sigma} ( (t_0-1/2^\sigma,t_0) \times \rr^d ).\stepcounter{equation}\tag{\theequation}\label{il2}\]
Moreover, similar to \eqref{8888} with a shift {of the coordinates} and some minor modifications, we have
\begin{align*}
& \norm{L^1 v}_{ L_{p_1} ( (t_0 -(1/2)^\sigma,t_0)\times B_{\sqrt{d}/2}(x_0) )   }\\
&\le  N \sum_{z \in \zz^d}  (1 + |z|^{d+\sigma})^{-1}\big( |L^1 v|^{p_0} \big)_{(t_0 -1,t_0)\times C_1(z+x_0)}^{1/{p_0}}
\stepcounter{equation}\tag{\theequation}\label{il1}.
\end{align*}
Therefore, by taking anther cutoff function in time together with \eqref{il2}, Corollary \ref{so}, and \eqref{il1}, we obtain
\begin{align*}
& \norm{L^1 v}_{ L_{p_2} ( Q_{1/4} (t_0, 0) )   }\\
&\le  N \sum_{z \in \zz^d}  (1 + |z|^{d+\sigma})^{-1}\big( |L^1 v|^{p_1} \big)_{(t_0 -(1/2)^\sigma,t_0)\times C_{1/2}(z)}^{1/{p_1}}\\
&\le  N \sum_{z \in \zz^d}  (1 + |z|^{d+\sigma})^{-1} \sum_{y \in \zz^d} (1 + |y-z|^{d+\sigma})^{-1}\big( |L^1 v|^{p_0} \big)_{(t_0 -1,t_0)\times C_{1}(y)}^{1/{p_0}}\\
&\le  N \sum_{z \in \zz^d}  (1 + |z|^{d+\sigma})^{-1}\big( |L^1 v|^{p_0} \big)_{(t_0 -1,t_0)\times C_1(z)}^{1/{p_0}},
\end{align*}
where for the last inequality, we used
\begin{align*}
&\sum_{z \in \zz^d} (1 + |z|^{d+\sigma})^{-1}(1 + |y-z|^{d+\sigma})^{-1} \\
&\le N(d,\sigma)  (1 + |y|^{d+\sigma})^{-1}  \sum_{z \in \zz^d} \Big[(1 + |z|^{d+\sigma})^{-1} + (1 + |y-z|^{d+\sigma})^{-1}\Big]\\
&\le N(d,\sigma) (1 + |y|^{d+\sigma})^{-1} .
\end{align*}

We repeat the above process with $p_m$, $m = 3,4,5,\ldots$, for finite many times until $p_i \ge p$ or $p_i > d/\sigma + 1$. Note that the number of iterations depends only on $d,\sigma$, and $p$ by \eqref{ip2}. Thus, by a covering argument, we obtain \eqref{homo2}.

For \eqref{homo}, if $p_0 \ge 2(d/\sigma + 1)$, then by Corollary \ref{so}, we can replace the left-hand side of the inequality \eqref{8888} with $[L^1 v]_{C^{\tau /\sigma, \tau} ( Q_{1/2} (t_0, 0) ) }$, we get \eqref{homo}.
Otherwise, we apply the iteration argument as above until $p_k \ge 2(d/\sigma + 1)$, and by a covering argument, we arrive at \eqref{homo}.
The proposition is proved.
\end{proof}

\begin{remark}\label{657}
It is worth noting that the operator we considered in Proposition \ref{ho} was local in time, i.e., $\alpha =1$. For general $\alpha \neq 1$, it is not clear to us whether one can derive a similar estimate. In particular, we cannot get an expression as simple as \eqref{y7} (see the extra $\Tilde{g}$ term on \eqref{88889}), {and} the second  inequality of \eqref{8888} no longer holds.
\end{remark}

Next, we estimate the non-homogeneous part, which satisfies the zero initial condition  at $t=t_0-1$. In the following proposition, we denote $\norm{\, \cdot\,}_{p_0,\Omega} := \norm{\,\cdot\,}_{L_{p_0}((t_0-1,t_0) \times \Omega)}$ for any $\Omega\subset \rr^d $ and  $\norm{\,\cdot\,}_{p_0} := \norm{\,\cdot\,}_{L_{p_0}((t_0-1,t_0) \times \rr^d)}$.
\begin{proposition}\label{nonhomo}
Let $t_0\in \rr$ and {$w \in \hh_{p_0, 0}^{1, \sigma} ( (t_0-1,t_0) \times \rr^d ) $} {be} such that
\[
\partial_t w - L w + \lambda w = f \quad \text{in}\quad  (t_0-1,t_0) \times \rr^d .
\]
Then, for any $x_0 \in \rr^d$ and $R\ge1$, we have
\begin{align*}
&( |L^1 w|^{p_0} )^{1 / p_0}_{(t_0 -1 ,t_0)\times B_{R/2}(x_0)} + \lambda \big( |  w|^{p_0} \big)_{(t_0 -1 ,t_0)\times B_{R/2}}^{1/{p_0}}\\
&\le N \sum_{k = 0}^{\infty} 2^{-k\sigma} \big( |f|^{p_0} \big)_{(t_0 - 1,t_0)\times B_{2^kR}(x_0)}^{1/{p_0}},\stepcounter{equation}\tag{\theequation}\label{oo}
\end{align*}
where $N =N(d,\nu,\Lambda,\sigma,p_0)$.
\end{proposition}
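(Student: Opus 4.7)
The proof follows a decomposition and iteration strategy built on Theorem~\ref{main} (in its $\alpha=1$ form, already proven) and Corollary~\ref{lr}. I would first decompose $f = f_0 + \sum_{k\ge 1} f_k$, where $f_0 := f\cdot 1_{B_{2R}(x_0)}$ and, for $k\ge 1$, $f_k := f\cdot 1_{A_k}$ with $A_k := B_{2^{k+1}R}(x_0)\setminus B_{2^kR}(x_0)$. For each $k$ let $w_k \in \hh^{1,\sigma}_{p_0,0}((t_0-1,t_0)\times \rr^d)$ solve $\partial_t w_k - Lw_k + \lambda w_k = f_k$ with zero initial data; by the uniqueness part of Theorem~\ref{main}, $w = \sum_{k\ge 0} w_k$, and it suffices to bound each piece separately on $B_{R/2}(x_0)$.

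For $k=0$, the global estimate directly gives $\|L^1 w_0\|_{L_{p_0}(\rr^d_T)} + \lambda \|w_0\|_{L_{p_0}(\rr^d_T)} \le N\|f_0\|_{L_{p_0}}$, and dividing by $|(t_0-1,t_0)\times B_{R/2}(x_0)|^{1/p_0}$ produces $N(|f|^{p_0})^{1/p_0}_{(t_0-1,t_0)\times B_{2R}(x_0)}$, corresponding to the $k=0,1$ contributions to the target sum. For $k\ge 1$ the crucial point is that $f_k$ vanishes on $B_{2^kR}(x_0)$, so $w_k$ satisfies the homogeneous equation there. Applying Corollary~\ref{lr} to $w_k$ at the large scale $R' = 2^kR$ (so that the $f_k$-term drops out) gives
\begin{equation*}
(|L^1 w_k|^{p_0})^{1/p_0}_{B_{2^{k-1}R}(x_0)} + \lambda(|w_k|^{p_0})^{1/p_0}_{B_{2^{k-1}R}(x_0)} \le N(2^kR)^{-\sigma}\sum_{m=0}^\infty 2^{-m\sigma}(|w_k|^{p_0})^{1/p_0}_{B_{2^{k+m}R}(x_0)},
\end{equation*}
and combining this with $\|w_k\|_{L_{p_0}(\rr^d_T)} \le N\|f_k\|_{L_{p_0}} \le N(2^{kd}R^d)^{1/p_0}(|f|^{p_0})^{1/p_0}_{A_k}$ together with the (now favorable, since $m\ge 0$) volume comparison $(|w_k|^{p_0})^{1/p_0}_{B_{2^{k+m}R}(x_0)} \le N\cdot 2^{-md/p_0}(|f|^{p_0})^{1/p_0}_{A_k}$ yields
\begin{equation*}
(|L^1 w_k|^{p_0})^{1/p_0}_{B_{2^{k-1}R}(x_0)} + \lambda(|w_k|^{p_0})^{1/p_0}_{B_{2^{k-1}R}(x_0)} \le N\, 2^{-k\sigma}(|f|^{p_0})^{1/p_0}_{A_k},
\end{equation*}
where I use $R\ge 1$ to absorb the residual $R^{-\sigma}$ factor.

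To transfer this estimate from $B_{2^{k-1}R}(x_0)$ down to the target ball $B_{R/2}(x_0)$, I would iterate Corollary~\ref{lr} inward on the nested balls $B_{2^{k-\ell}R}(x_0)$ for $\ell = 1,\ldots,k-1$. At every such interior scale $f_k$ still vanishes, so only the $|w_k|$-tail survives on the right; that tail is split into (i) larger balls on which the previous outer iterate has already established the $2^{-k\sigma}$ decay, and (ii) still larger balls controlled by the global $L_{p_0}$ bound via the volume argument above. The hypothesis $R\ge 1$ guarantees that the prefactors $(2^{k-\ell}R)^{-\sigma}$ form a convergent geometric series in $\ell$, so the $2^{-k\sigma}$ decay propagates all the way down to $B_{R/2}(x_0)$. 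Summing the resulting estimates over $k\ge 0$ produces the announced bound $N\sum_{k\ge 0} 2^{-k\sigma}(|f|^{p_0})^{1/p_0}_{(t_0-1,t_0)\times B_{2^kR}(x_0)}$.

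The main obstacle is precisely this inward iteration. A direct volume-ratio passage from $B_{2^{k-1}R}(x_0)$ to $B_{R/2}(x_0)$ would introduce the multiplicative factor $2^{(k-1)d/p_0}$, which overwhelms the $2^{-k\sigma}$ decay whenever $p_0\le d/\sigma$ and hence cannot be afforded for arbitrary $p_0\in (1,\infty)$. Re-applying Corollary~\ref{lr} at each interior scale (rather than passing by volume comparison) sidesteps this loss, exploiting in an essential way both the vanishing of $f_k$ on every interior ball and the assumption $R\ge 1$, which together allow the already-obtained $2^{-k\sigma}$ decay to be carried inward without degradation.
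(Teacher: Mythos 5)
Your annular decomposition of $f$ into pieces $f_k$ supported on $B_{2^{k+1}R}(x_0)\setminus B_{2^kR}(x_0)$ is a genuinely different route from the paper, which does not decompose $f$ at all. Instead the paper applies Corollary~\ref{lr} directly to $w$, introduces $A_k := (|w|^{p_0})^{1/p_0}_{(t_0-1,t_0)\times B_{2^kR}(x_0)}$, uses Lemma~\ref{a2} (the zero--initial--data estimate $\|w\|_{p_0} \le N\|\partial_t w\|_{p_0}$, valid here because the time interval has length~$1$) followed by Corollary~\ref{lr} again to show $A_k \le N(|f|^{p_0})^{1/p_0}_{B_{2^{k+1}R}} + N\sum_{j\ge k+1}2^{-j\sigma}A_j$, and then closes by multiplying by $2^{-k\sigma}$, summing over $k\ge k_0$, absorbing the geometric tail for $k_0$ large, and inducting backward from $k_0$ to $0$. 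The absorption step is what prevents any accumulation of constants.

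There is a genuine gap in your inward iteration. Corollary~\ref{lr} controls $(|\partial_t w_k|^{p_0})^{1/p_0}$, $(|L^1 w_k|^{p_0})^{1/p_0}$ and $\lambda(|w_k|^{p_0})^{1/p_0}$ on the half ball, but its right-hand side needs $(|w_k|^{p_0})^{1/p_0}$ on the larger dyadic balls. When you say the ``previous outer iterate has already established the $2^{-k\sigma}$ decay'' on the intermediate balls, you are tacitly asserting that the iterate controls $|w_k|$ itself, but nothing in Corollary~\ref{lr} produces that when $\lambda = 0$. The only mechanism for converting the $|\partial_t w_k|$ output of Corollary~\ref{lr} back into the $|w_k|$ input it needs is precisely Lemma~\ref{a2}, applied at each scale to the zero-initial-data solution on $(t_0-1,t_0)$. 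You never invoke it. Without it the iteration cannot close, and with it you would still need an absorption argument (as in the paper) to keep the iteration constants from compounding over $\ell = 1,\ldots,k-1$; a naive induction only gives $C_{\ell+1} \le N_1 + N_2 C_\ell$, which does not stay bounded unless $N_2<1$. In short: the key ingredient (Lemma~\ref{a2}) is missing, and the bookkeeping that would be needed even with it (a summation-and-absorption scheme) is not supplied.
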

\begin{proof}
By shifting the coordinates, we assume that $x_0 =0$. It follows from Corollary \ref{lr} that
\begin{align*}
 &\big( | L^1 w|^{p_0} \big)_{(t_0 -1 ,t_0)\times B_{R/2}}^{1/{p_0}} + \lambda \big( |  w|^{p_0} \big)_{(t_0 -1 ,t_0)\times B_{R/2}}^{1/{p_0}}\\
 &\le N \big( | f|^{p_0} \big)_{(t_0 -1 ,t_0)\times B_{R}}^{1/{p_0}}+   N \sum_{k = 0}^{\infty} 2^{-k\sigma} \big( |w|^{p_0} \big)_{(t_0 -1 ,t_0)\times B_{2^kR}}^{1/p_0}\\
 &:= N \big( | f|^{p_0} \big)_{(t_0 -1 ,t_0)\times B_{R}}^{1/{p_0}}+   N \sum_{k = 0}^{\infty} 2^{-k\sigma} A_k
 \stepcounter{equation}\tag{\theequation}\label{p01}.
\end{align*}
It remains to estimate $A_k$. By Lemma \ref{a2} and Corollary \ref{lr}, for each $k= 0,1,\ldots$,
\begin{align*}
A_k &\le N \big( |\partial_t w|^{p_0} \big)_{(t_0 -1 ,t_0)\times B_{2^kR}}^{1/p_0} \\
&\le N \big( | f|^{p_0} \big)_{(t_0 -1 ,t_0)\times B_{2^{k+1}R}}^{1/{p_0}}+   N \sum_{j = 0}^{\infty} 2^{-(j+k+1)\sigma} \big( |w|^{p_0} \big)_{(t_0 -1 ,t_0)\times B_{2^{j+k+1}R}}^{1/p_0}\\
&\le N \big( | f|^{p_0} \big)_{(t_0 -1 ,t_0)\times B_{2^{k+1}R}}^{1/{p_0}}+   N \sum_{j = k+1}^{\infty} 2^{-j\sigma} A_j,\stepcounter{equation}\tag{\theequation}\label{p02}
\end{align*}
where $N$ is independent of $k$.
By first multiplying both sides of \eqref{p02} by $2^{-\sigma k}$ and then summing over $k = k_0,k_0+1,\ldots$, for some integer $k_0$ to be determined, we obtain
\begin{align*}
&\sum_{k=k_0}^{\infty} 2^{-\sigma k} A_k \\
&\le  N \sum_{k=k_0}^{\infty }2^{-\sigma (k+1)}\big( | f|^{p_0} \big)_{(t_0 -1 ,t_0)\times B_{2^{k+1}R}}^{1/{p_0}} + N \sum_{k=k_0}^{\infty }2^{-\sigma k} \sum_{j=k+1}^{\infty}2^{-\sigma j }A_j\\
&\le   N \sum_{k=k_0}^{\infty }2^{-\sigma (k+1)}\big( | f|^{p_0} \big)_{(t_0 -1 ,t_0)\times B_{2^{k+1}R}}^{1/{p_0}}+ \frac{N 2^{-\sigma k_0}}{1 - 2^{-\sigma}}
 \sum_{j = k_0 + 1}^{\infty} 2^{-\sigma j} A_j.
\end{align*}
Picking $k_0$ sufficiently large so that $ \frac{N  2^{-\sigma k_0}}{1 - 2^{-\sigma}} \leq 1/2$, we have
\[
\sum_{k = k_0}^{\infty} 2^{-\sigma k} A_k
\leq  N \sum_{k = k_0}^\infty  2^{-\sigma k}
 ( |f|^{p_0} )^{1/p_0}_{(t_0-1, t_0 ) \times B_{2^{k}R}}.
\]
Therefore, by induction,
\[\sum_{k = 0}^{\infty} 2^{-\sigma k} A_k
\leq  N \sum_{k = 0}^\infty  2^{-\sigma k}
 ( |f|^{p_0} )^{1/p_0}_{(t_0-1, t_0 ) \times B_{2^{k}R}}. \stepcounter{equation}\tag{\theequation}\label{008}\]
Indeed, if there exists $N =N(d,\nu,\Lambda,\sigma,p_0)$ such that
$$
\sum_{k = j}^{\infty} 2^{-\sigma k} A_k
\leq  N \sum_{k = j}^\infty  2^{-\sigma k}
 ( |f|^{p_0} )^{1/p_0}_{(t_0-1, t_0 ) \times B_{2^{k}R}}
$$
for $j = 1,2,\ldots, k_0$, then, by \eqref{p02},
\begin{align*}
&\sum_{k = j-1}^{\infty} 2^{-\sigma k} A_k \le  2^{-\sigma(j-1) } A_{j-1} + \sum_{k = j}^{\infty} 2^{-\sigma k} A_k \\
&\le  N  2^{-\sigma j} \big( | f|^{p_0} \big)_{(t_0 -1 ,t_0)\times B_{2^{j}R}}^{1/{p_0}}+   N \sum_{k = j}^{\infty} 2^{-k\sigma} A_k \\
&\le N \sum_{k = j-1}^\infty  2^{-\sigma k}
 ( |f|^{p_0} )^{1/p_0}_{(t_0-1, t_0 ) \times B_{2^{k}R}}.
\end{align*}
Therefore, by \eqref{p01} and \eqref{008}, we arrive at \eqref{oo}, and the lemma is proved.
\end{proof}
\begin{remark}\label{658}
It is worth noting that although the operator we considered in Proposition \ref{nonhomo} was local in time, i.e. $\alpha =1$, {the proof still works} for any $\alpha \in (0,1]$ since the embedding used in \eqref{p02} still holds {in this case}.
\end{remark}

With the estimates of $v$ and $w$, we are ready to prove a mean oscillation estimate for $u$.
\begin{proposition}\label{4.5}
Let $\sigma\in(0,2)$, $T \in (0, \infty)$, $p_0  \in (1, \infty)$, and $u \in \hh_{p_0}^{1, \sigma} ( (-\infty,T)\times \rr^d )$ satisfy
\[
\partial_t u - L u + \lambda u = f
\quad \text{in}\,\,  (-\infty,T)\times \rr^d .
\]
Then, for any $(t_0, x_0) \in (-\infty,T] \times \rr^d$,  $r \in (0, \infty)$, and $\kappa \in (0, 1/4)$, we have
\begin{align*}
& ( |L^1 u - (L^1 u )_{{Q_{\kappa r}(t_0,x_0)}} | )_{{Q_{\kappa r}(t_0,x_0)}
} + ( |\lambda u - (\lambda u )_{{Q_{\kappa r}(t_0,x_0)}} | )_{{Q_{\kappa r}(t_0,x_0)}
}\\
&\quad \leq N \kappa^\tau \sum_{k=0}^{\infty} 2^{-k\sigma} \big[ \big( |L^1 u|^{p_0} \big)_{(t_0 -r^\sigma,t_0)\times B_{2^{k}r} (x_0)}^{1/{p_0}} + \big( |\lambda u|^{p_0} \big)_{(t_0 -r^\sigma,t_0)\times B_{2^{k}r} (x_0)}^{1/{p_0}} \big]\\
&\quad + N \kappa^{- (d+\sigma )/p_0}\sum_{k = 0}^{\infty}
2^{-k\sigma} \big( |f|^{p_0} \big)_{(t_0 - r^\sigma,t_0)\times B_{2^kr}(x_0)}^{1/{p_0}}, \stepcounter{equation} \tag{\theequation}\label{i3}
\end{align*}
where $\tau = \tau (d,\sigma, p_0)$ and $N = N(d,\nu,\Lambda,\sigma,p_0)$.
\end{proposition}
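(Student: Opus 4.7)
The plan is to follow the decomposition strategy announced at the start of Section \ref{4} and invoke the two preceding propositions. On the strip $(t_0 - r^\sigma, t_0) \times \rr^d$, introduce $w \in \hh_{p_0, 0}^{1, \sigma}\big((t_0 - r^\sigma, t_0) \times \rr^d\big)$ as the solution of $\partial_t w - L w + \lambda w = f$ with zero initial data at $t = t_0 - r^\sigma$, and set $v = u - w$, which then satisfies the homogeneous equation on the strip. Since $Q_{\kappa r}(t_0, x_0) \subset Q_{r/2}(t_0,x_0) \subset (t_0 - r^\sigma, t_0) \times B_{r/2}(x_0)$ when $\kappa < 1/4$, the triangle inequality for mean oscillation gives
\begin{align*}
( |L^1 u - (L^1 u )_{Q_{\kappa r}} | )_{Q_{\kappa r}}
\le ( |L^1 v - (L^1 v )_{Q_{\kappa r}} | )_{Q_{\kappa r}} + 2 ( |L^1 w| )_{Q_{\kappa r}},
\end{align*}
and similarly with $\lambda u$ in place of $L^1 u$.

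The direct $w$-contribution produces the large-prefactor $f$-sum in \eqref{i3}. By Hölder's inequality and a volume comparison between $Q_{\kappa r}$ and $(t_0 - r^\sigma, t_0) \times B_{r/2}(x_0)$,
\[
( |L^1 w| )_{Q_{\kappa r}} \le N \kappa^{-(d+\sigma)/p_0} ( |L^1 w|^{p_0} )^{1/p_0}_{(t_0 - r^\sigma, t_0) \times B_{r/2}(x_0)},
\]
and applying Proposition \ref{nonhomo} (in its scaled form on the strip of length $r^\sigma$, with $R = r$) bounds the right-hand side by exactly $N \kappa^{-(d+\sigma)/p_0} \sum_{k \ge 0} 2^{-k\sigma} ( |f|^{p_0} )^{1/p_0}_{(t_0 - r^\sigma, t_0) \times B_{2^k r}(x_0)}$. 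An identical argument, using the $\lambda$-bound in Proposition \ref{nonhomo}, handles $(|\lambda w|)_{Q_{\kappa r}}$.

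The $v$-contribution produces the small-prefactor $\kappa^\tau$ sums. Applying Proposition \ref{ho}(2) on $Q_{r/2}(t_0,x_0)$ and exploiting that $Q_{\kappa r}$ has parabolic diameter at most $N\kappa r$,
\begin{align*}
( |L^1 v - (L^1 v )_{Q_{\kappa r}} | )_{Q_{\kappa r}}
\le N (\kappa r)^\tau [L^1 v]_{C^{\tau/\sigma, \tau}(Q_{r/2})}
\le N \kappa^\tau \sum_{z \in \zz^d} (1+|z|^{d+\sigma})^{-1} ( |L^1 v|^{p_0} )^{1/p_0}_{(t_0 - r^\sigma, t_0) \times C_r(z + x_0)}.
\end{align*}
Writing $L^1 v = L^1 u - L^1 w$, both resulting lattice sums are converted to dyadic ball sums by grouping $\zz^d$ into annuli $A_k = \{|z| \sim 2^k\}$ (each with $\sim 2^{kd}$ points), applying discrete Hölder to the $(|\cdot|^{p_0})^{1/p_0}$ averages over the $2^{kd}$ cubes $\{C_r(z+x_0)\}_{z \in A_k}$ all contained in $B_{C 2^k r}(x_0)$, and using $(1+|z|^{d+\sigma})^{-1} \sim 2^{-k(d+\sigma)}$; this produces the bound $N \sum_k 2^{-k\sigma} (|L^1 u|^{p_0} + |L^1 w|^{p_0})^{1/p_0}_{(t_0 - r^\sigma, t_0) \times B_{2^k r}(x_0)}$. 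The $L^1 u$ piece gives exactly the $L^1 u$ dyadic sum in \eqref{i3}, while the $L^1 w$ piece is bounded, term by term in $k$, by a second application of Proposition \ref{nonhomo} to the ball $B_{2^k r}$, yielding additional $f$-sums that are absorbed into the $\kappa^{-(d+\sigma)/p_0}$ term (which dominates $\kappa^\tau$ since $\kappa < 1$). The $\lambda u$-version is entirely parallel, using \eqref{homoo}/\eqref{homoo2} and the $\lambda$ estimate in Proposition \ref{nonhomo}.

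The main technical obstacle is the index bookkeeping in the conversion step: performing the discrete Hölder on each dyadic annulus $A_k$ of $\zz^d$ to pass from the cube-average lattice sum produced by Proposition \ref{ho} to a clean dyadic ball sum with the correct $2^{-k\sigma}$ weight, and then iterating Proposition \ref{nonhomo} on the $L^1 w$-piece while ensuring that any multiplicative or logarithmic corrections from the double sum can be absorbed into the large prefactor $\kappa^{-(d+\sigma)/p_0}$ in front of the $f$-term (so that the final estimate retains the advertised $2^{-k\sigma}$ decay on each side of \eqref{i3}).
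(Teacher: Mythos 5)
Your overall architecture matches the paper's: decompose $u=w+v$ on the strip, control the direct $w$-contribution with H\"older and Proposition \ref{nonhomo} to get the $\kappa^{-(d+\sigma)/p_0}$-prefactor $f$-sum, and control $v$ via the H\"older estimate in Proposition \ref{ho}(2). The conversion of the $L^1u$ (and $\lambda u$) lattice sums to dyadic ball sums via discrete H\"older per annulus is also the paper's computation ($I_1$ and $I_2$).

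The gap is in the treatment of the residual $L^1 w$ lattice sum (the paper's $I_3$), and it is precisely the ``technical obstacle'' you flag at the end but incorrectly claim is absorbable. You convert the lattice sum to a dyadic ball sum \emph{first}, arriving at $\sum_k 2^{-k\sigma}\big(|L^1 w|^{p_0}\big)^{1/p_0}_{(t_0-r^\sigma,t_0)\times B_{2^k r}}$, and then apply Proposition \ref{nonhomo} to each ball $B_{2^k r}$. Each application contributes $\sum_{j\ge 0} 2^{-j\sigma}\big(|f|^{p_0}\big)^{1/p_0}_{B_{2^{j+k+1}r}}$, so the double sum, re-indexed by $m=j+k+1$, has coefficient $\sum_{j+k=m-1}2^{-(m-1)\sigma} = m\,2^{-(m-1)\sigma}$. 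The resulting $\sum_m m\,2^{-m\sigma}\big(|f|^{p_0}\big)^{1/p_0}_{B_{2^m r}}$ is \emph{not} bounded by $N\sum_m 2^{-m\sigma}\big(|f|^{p_0}\big)^{1/p_0}_{B_{2^m r}}$ with an absolute $N$ (take $a_m \sim 2^{m\sigma}/m^2$ as a counterexample), and this linear-in-$m$ correction cannot be hidden in a constant prefactor such as $\kappa^{-(d+\sigma)/p_0}$. So the proof as written establishes a slightly weaker estimate than \eqref{i3}.

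The paper avoids this by reversing the order: it applies Proposition \ref{nonhomo} on each unit cube $C_1(z)$ \emph{while still in the lattice form}, obtaining a double sum $\sum_{z}(1+|z|^{d+\sigma})^{-1}\sum_j 2^{-j\sigma}\big(|f|^{p_0}\big)^{1/p_0}_{C_{2^j}(z)}$, and then exploits the exact disjointness of the translated unit cubes $\{C_1(z_{j,i})\}$ inside $C_{2^j}(z)$. This yields the counting bound $\sum_{|z|\sim 2^k}\int\int_{C_{2^j}(z)}|f|^{p_0}\le 2^{\min\{k,j\}d}\int\int_{B_{2^{j-1}\sqrt d+2^{k+1}}}|f|^{p_0}$, whose $\min\{k,j\}$ factor reduces the double sum to two single geometric series (with the $\max$ index determining the ball) and preserves the clean $2^{-m\sigma}$ decay. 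To repair your argument, you should keep the lattice structure when invoking Proposition \ref{nonhomo} rather than converting to dyadic balls first, and perform the overlap count as in the paper's estimate of $I_3$.
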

\begin{proof}
By shifting {the coordinates} and scaling, we assume $x_0 = 0$ and $r=1$.
Moreover, by Theorem \ref{main}, there exists {$w \in \hh_{p_0, 0}^{1, \sigma} ( (t_0-1,t_0) \times \rr^d)$} satisfying \eqref{9998}. Also, $v := u-w \in  \hh_{p_0}^{1, \sigma} ( (-\infty,t_0) \times \rr^d )$ satisfies \eqref{9999}.

Next, by H\"older's inequality and Proposition \ref{nonhomo},
\begin{align*}
& ( |L^1 u - (L^1 u )_{Q_{\kappa} (t_0, 0)} | )_{
Q_{\kappa} (t_0, 0) } + ( |\lambda u - (\lambda u )_{Q_{\kappa} (t_0, 0)} | )_{
Q_{\kappa} (t_0, 0) } \\
&\leq
( |L^1 v - (L^1 v )_{Q_{\kappa} (t_0, 0)} | )_{
Q_{\kappa} (t_0, 0)} + N \kappa^{- (d+\sigma)/p_0}
(| L^1 w|^{p_0} )^{1/p_0}_{Q_{1/2} (t_0, 0)}\\
& \quad + ( |\lambda v - (\lambda v )_{Q_{\kappa} (t_0, 0)} | )_{
Q_{\kappa} (t_0, 0)} + N \kappa^{- (d+\sigma)/p_0}
(| \lambda w|^{p_0} )^{1/p_0}_{Q_{1/2} (t_0, 0)}\\
&\leq
( |L^1 v - (L^1 v )_{Q_{\kappa} (t_0, 0)} | )_{
Q_{\kappa} (t_0, 0)} +  ( |\lambda v - (\lambda v )_{Q_{\kappa} (t_0, 0)} | )_{
Q_{\kappa} (t_0, 0)} \\
&\quad + N \kappa^{- (d+\sigma)/p_0}
 \sum_{k = 0}^{\infty} 2^{-k\sigma} \big( |f|^{p_0} \big)_{(t_0 - 1,t_0)\times B_{2^k}}^{1/{p_0}}\stepcounter{equation} \tag{\theequation}\label{e1}.
\end{align*}
For the first two terms on the right-hand side of \eqref{e1}, by Proposition \ref{ho},
there exists $\tau\in(0,1)$ such that
\begin{align*}
&(|L^1 v - (L^1 v)_{Q_\kappa ( t_0,0) } | )_{Q_\kappa ( t_0,0)}+ (|\lambda v - (\lambda v)_{Q_\kappa ( t_0,0) } | )_{Q_\kappa ( t_0,0)}\\
&\le \kappa^\tau [L^1 v]_{C^{\tau /\sigma, \tau} (Q_{1/4} (t_0, 0)  )  } +  \kappa^\tau [\lambda v]_{C^{\tau /\sigma, \tau} (Q_{1/4} (t_0, 0)  )  }\\
&\le N\kappa^\tau \sum_{z \in \zz^d}  (1 + |z|^{d+\sigma})^{-1} \big[ \big( |L^1 v|^{p_0} \big)_{(t_0 -1,t_0)\times C_1(z)}^{1/{p_0}}+ \big( |\lambda v|^{p_0} \big)_{(t_0 -1,t_0)\times C_1(z)}^{1/{p_0}} \big]\\
&\leq  N \kappa^\tau\sum_{z \in \zz^d}  (1 + |z|^{d+\sigma})^{-1}  \big( |L^1 u|^{p_0} \big)_{(t_0 -1,t_0)\times C_1(z)}^{1/{p_0}}\\
&\quad +   N \kappa^\tau\sum_{z \in \zz^d}  (1 + |z|^{d+\sigma})^{-1}\big( |\lambda u|^{p_0} \big)_{(t_0 -1,t_0)\times C_1(z)}^{1/{p_0}}\\
&\quad+  N \kappa^\tau \sum_{z \in \zz^d}  (1 + |z|^{d+\sigma})^{-1} \big[ \big( |L^1 w|^{p_0} \big)_{(t_0 -1,t_0)\times C_1(z)}^{1/{p_0}}+ \big( |\lambda w|^{p_0} \big)_{(t_0 -1,t_0)\times C_1(z)}^{1/{p_0}} \big]\\
&=: N\kappa^\tau I_1 + N\kappa^\tau I_2 +N \kappa^\tau I_3 . \stepcounter{equation} \tag{\theequation}\label{i2}
\end{align*}
To estimate $I_1,I_2$ and $I_3$, for $R > r \ge 1$, we denote $N_{R}$ to be the number of $z\in\zz^d$ lying in $B_R$, and $N_{r,R}$ to be the number of $z\in\zz^d$ lying in $B_R\setminus B_r^o$. Note that
\[
N_{r,R} \le N_R \qx{and} |B_{R-\sqrt{d}/2}|\le N_{R} \le  |B_{R+\sqrt{d}/2}|\le N(d)R^d , \stepcounter{equation} \tag{\theequation}\label{co1}
\]
and for $k=0,1,2,3,\ldots$, we have
\[
N_{2^k,2^{k+1}} \ge N(d) 2^{kd}.\stepcounter{equation} \tag{\theequation}\label{co3}
\]
Then, by H\"older's inequality {for $l_{p_0}$}, \eqref{co1}, and \eqref{co3}, we have
 \begin{align*}
&I_1 - \big( |L^1 u|^{p_0} \big)_{(t_0 -1,t_0)\times C_1(0)}^{1/{p_0}} \\
&\le N \sum_{k=0}^{\infty} \sum_{\substack{|z| = 2^k \\ z\in \zz^d}}^{ 2^{k+1}} 2^{-kd-k\sigma} \big( |L^1 u|^{p_0} \big)_{(t_0 -1,t_0)\times C_1(z)}^{1/{p_0}}\\
&\le N \sum_{k=0}^{\infty} 2^{-kd-k\sigma} N_{2^k,2^{k+1}}  \sum_{\substack{|z| = 2^k \\ z\in \zz^d}}^{ 2^{k+1}} N_{2^k,2^{k+1}}^{-1} \big( |L^1 u|^{p_0} \big)_{(t_0 -1,t_0)\times C_1(z)}^{1/{p_0}}\\
&\le N \sum_{k=0}^{\infty} 2^{-kd-k\sigma} N_{2^k,2^{k+1}} \bigg(  \sum_{\substack{|z| = 2^k \\ z\in \zz^d}}^{ 2^{k+1}} N_{2^k,2^{k+1}}^{-1} \big( |L^1 u|^{p_0} \big)_{(t_0 -1,t_0)\times C_1(z)}\bigg)^{1/{p_0}}\\
&\le N \sum_{k=0}^{\infty} 2^{-k\sigma}  \big( |L^1 u|^{p_0} \big)_{(t_0 -1,t_0)\times B_{2^{k+1}+\sqrt{d}{/2}}}^{1/{p_0}}.
\end{align*}
Thus,
\[
I_1 \le  N \sum_{k=0}^{\infty} 2^{-k\sigma}  \big( |L^1 u|^{p_0} \big)_{(t_0 -1,t_0)\times B_{2^{k}}}^{1/{p_0}}.\stepcounter{equation} \tag{\theequation}\label{co4}
\]
Similarly, by replacing $L^1 u$ with $\lambda u$, we conclude that
\[
I_2 \le  N \sum_{k=0}^{\infty} 2^{-k\sigma}  \big( |\lambda u|^{p_0} \big)_{(t_0 -1,t_0)\times B_{2^{k}}}^{1/{p_0}}.\stepcounter{equation} \tag{\theequation}\label{co444}
\]
For $I_3$, by Proposition \ref{nonhomo}, we have
\begin{align*}
I_3& \le  N \sum_{z \in \zz^d}  (1 + |z|^{d+\sigma})^{-1} \sum_{j = 0}^{\infty} 2^{-j\sigma} \big( |f|^{p_0} \big)_{(t_0 - 1,t_0)\times B_{2^{j}\sqrt{d}}(z)}^{1/{p_0}}\\
&\le  N \sum_{z \in \zz^d}  (1 + |z|^{d+\sigma})^{-1} \sum_{j = 0}^{\infty} 2^{-j\sigma} \big( |f|^{p_0} \big)_{(t_0 - 1,t_0)\times C_{2^{j}}(z)}^{1/{p_0}}.\stepcounter{equation} \tag{\theequation}\label{co2}
\end{align*}
For $j\ge 0$, $z=(z^1,\ldots,z^d) , i=(i^1,\ldots,i^d)\in \zz^d$ and each component of $i$ takes value in $\{0,1,\ldots 2^j -1\}$, we denote
\[
z_{j,i} = (z^1-(2^j-1)/2+i^1,\ldots, z^d-(2^j-1)/2+i^d).
\]
Since $C_1(z_{j,i})$ is disjoint with respect to $i$ for fixed $z$ and $j$, and it is also disjoint with respect to $z$ for fixed $i$ and $j$, we have
\begin{align*}
\sum_{\substack{|z| = 2^k \\ z\in \zz^d}}^{ 2^{k+1}} \int_{(t_0 - 1,t_0)}\int_{C_{2^{j}}(z)}  |f|^{p_0} &= \sum_{\substack{|z| = 2^k \\ z\in \zz^d}}^{ 2^{k+1}} \sum_{i} \int_{(t_0 - 1,t_0)}\int_{C_{1}(z_{j,i})}  |f|^{p_0}\\&\le 2^{\min\{k,j\}d}  \int_{(t_0 - 1,t_0)}\int_{B_{2^{j{-1}}\sqrt{d}+2^{k+1}}}  |f|^{p_0}.\stepcounter{equation} \tag{\theequation}\label{co9}
\end{align*}
By \eqref{co2}, \eqref{co3}, {H\"older's inequality,} and \eqref{co9},
\begin{align*}
&I_3 - {N}\sum_{j = 0}^{\infty} 2^{-j\sigma} \big( |f|^{p_0} \big)_{(t_0 - 1,t_0)\times C_{2^{j}}(0)}^{1/{p_0}}\\
&\le N \sum_{k=0}^{\infty} \sum_{\substack{|z| = 2^k \\ z\in \zz^d}}^{ 2^{k+1}} 2^{-kd-k\sigma} \sum_{j = 0}^{\infty} 2^{-j\sigma} \big( |f|^{p_0} \big)_{(t_0 - 1,t_0)\times C_{2^{j}}(z)}^{1/{p_0}}\\
&\le  N \sum_{k=0}^{\infty} \sum_{j = 0}^{\infty} 2^{-k\sigma-j\sigma} \sum_{\substack{|z| = 2^k \\ z\in \zz^d}}^{ 2^{k+1}} N_{2^k,2^{k+1}}^{-1} \big( |f|^{p_0} \big)_{(t_0 - 1,t_0)\times C_{2^{j}}(z)}^{1/{p_0}}\\
&\le  N \sum_{k=0}^{\infty} \sum_{j = 0}^{\infty} 2^{-k\sigma-j\sigma}  \bigg( 2^{-kd-jd} \sum_{\substack{|z| = 2^k \\ z\in \zz^d}}^{ 2^{k+1}} \int_{(t_0 - 1,t_0)}\int_{C_{2^{j}}(z)}  |f|^{p_0} \bigg)^{1/{p_0}}\\
&\le  N \sum_{k=0}^{\infty} \sum_{j = 0}^{\infty} 2^{-k\sigma-j\sigma}  \bigg( 2^{-kd-jd} 2^{\min\{k,j\}d}  \int_{(t_0 - 1,t_0)}\int_{B_{2^{j{-1}}\sqrt{d}+2^{k+1}}}  |f|^{p_0} \bigg)^{1/{p_0}}\\
&\le  N \sum_{k=0}^{\infty} \sum_{j = 0}^{\infty} 2^{-k\sigma-j\sigma} \bigg(1_{k\ge j}\big( |f|^{p_0} \big)_{(t_0 - 1,t_0)\times B_{2^{k}}}^{1/{p_0}} + 1_{k<j}\big( |f|^{p_0} \big)_{(t_0 - 1,t_0)\times B_{2^{j}}}^{1/{p_0}}\bigg)\\
& \le N \sum_{k=0}^{\infty} 2^{-k\sigma}\big( |f|^{p_0} \big)_{(t_0 - 1,t_0)\times B_{2^k}}^{1/{p_0}} ,
\end{align*}
which implies that
\[
I_3 \le  N \sum_{k=0}^{\infty} 2^{-k\sigma}\big( |f|^{p_0} \big)_{(t_0 - 1,t_0)\times B_{2^k}}^{1/{p_0}}. \stepcounter{equation} \tag{\theequation}\label{co5}
\]
Thus, combining \eqref{e1}, \eqref{i2}, \eqref{co4}, \eqref{co444}, and \eqref{co5}, we arrive at \eqref{i3}, and the proposition is proved.
\end{proof}

Next, we introduce the dyadic cubes as follows:
for each $n \in \zz$, pick $k(n){\in \zz}$ such that
$$ k(n) \le \sigma n < k(n) +1,$$ and let
\begin{align*}
     Q_{\vec{i}}^n = \big[ \frac{i_0}{2^{k(n)}} + T, \frac{i_0+1}{2^{k(n)}} + T \big) \times \big[ \frac{i_1}{2^n}, \frac{i_1+1}{2^n} \big) \times \cdots \times \big[ \frac{i_d}{2^n}, \frac{i_d+1}{2^n} \big)
\end{align*} and
$$
\cc_n := \big\{ Q_{\vec{i}}^n = Q_{(i_0,\ldots,i_d)}^n : \vec{i} = (i_0,\ldots,i_d)\in \zz^{d+1}, i_0\le -1\big\}.
$$
Furthermore, denote the dyadic sharp function of $g$ by
$$  g^{\#}_{dy}(t,x) = \sup_{n< \infty} \fint_{Q_{\vec{i}}^n \ni (t,x) } |g(s,y) - g_{|n}(t,x)| \, dy \, ds,$$
where
$$g_{|n}(t,x) = \fint_{Q_{\vec{i}}^n} g(s,y) \, dy \, ds \quad \text{for}\quad  (t,x) \in Q_{\vec{i}}^n.
$$

\begin{proof}[Proof of Theorem \ref{main2}]
{\em Step 1. The a priori estimate.}

We first prove \eqref{lo1} under the assumption that $u$ is compactly supported in the spacial variables. For the given $w_1 \in A_p(\rr, dt)$ and $w_2 \in A_q (\rr^d, dx)$, using reverse H\"older's inequality for $A_p$ weights, we pick
$
\gamma_1 = \gamma_1 (d, p, M_1)$ and $
\gamma_2 = \gamma_2 (d, q, M_1)
$
such that $p - \gamma_1 > 1$, $q - \gamma_2 > 1$, and
\[
w_1 \in A_{p- \gamma_1} (\rr, dt),
\quad w_2 \in A_{q - \gamma_2} (\rr^d, dx).
\]
Take $p_0 = p_0 (d, p, q, M_1)  \in (1, \infty)$ so that
\[
p_0 = \min \Bigg\{
\frac{p}{p- \gamma_1}, \frac{q}{q - \gamma_2}
\Bigg\} > 1.
\]
Note that
\[
w_1 \in A_{p - \gamma_1}(\rr, dt)
\subset A_{\frac{p}{p_0}} (\rr, dt)
\]
and
\[
w_2 \in A_{q - \gamma_2}(\rr^d, dx)
\subset A_{\frac{q}{p_0}} (\rr^d, dx).
\]
From these inclusions and the fact that $u \in \hh_{p,q,w,0}^{1, \sigma} ( \rr^d_T )$, it follows that
$u \in \hh_{p_0,0,\mathrm{loc}}^{1, \sigma} ( \rr^d_T )$. See the proof of \cite[Lemma 5.10]{Ap} for details.
Furthermore, since $u$ is compactly supported, we have $u \in \hh_{p_0,0}^{1, \sigma} ( \rr^d_T )$, and $u \in \hh_{p_0}^{1, \sigma} ( (-\infty, 0) \times \rr^d )$ by taking the zero extension for $t<0$.
Therefore, by Proposition \ref{4.5}, for any $  r\in (0, \infty)$, $\kappa \in (0, 1/4)$, and $(t_1, x_0) \in (0,T] \times \rr^d$, we have
\begin{align*}
& ( |L^1 u - (L^1 u )_{{Q_{\kappa r}(t_1,x_0)}} | )_{{Q_{\kappa r}(t_1,x_0)}
} + ( |\lambda u - (\lambda u )_{{Q_{\kappa r}(t_1,x_0)}} | )_{{Q_{\kappa r}(t_1,x_0)}
}\\
&\quad \leq N \kappa^\tau \sum_{k=0}^{\infty} 2^{-k\sigma} \big[ \big( |L^1 u|^{p_0} \big)_{(t_1 -r^\sigma,t_1)\times B_{2^{k}r} (x_0)}^{1/{p_0}} + \big( |\lambda u|^{p_0} \big)_{(t_1 -r^\sigma,t_1)\times B_{2^{k}r} (x_0)}^{1/{p_0}} \big]\\
&\quad + N \kappa^{- (d+\sigma )/p_0}\sum_{k = 0}^{\infty}
2^{-k\sigma} \big( |f|^{p_0} \big)_{(t_1 - r^\sigma,t_1)\times B_{2^kr}(x_0)}^{1/{p_0}}, \stepcounter{equation}\tag{\theequation}\label{io3}
\end{align*}
where $\tau = \tau (d,\sigma, p,q,M_1)$ and $N = N(d,\nu,\Lambda,\sigma,p,q,M_1)$.

Note that for any $(t_0,x_0)\in \rr^d_T$, $n\in \zz$, and $ Q_{\vec{i}}^n$ containing $(t_0,x_0)$, there exist $r = r(d,\sigma, n){>0}$ and $t_1 = \min(T,t_0+r^\sigma/2)\in(-\infty,T]$ such that
$$
Q_{\vec{i}}^n \subset Q_r(t_1, x_0)\quad \text{and}\quad
|Q_r(t_1, x_0)|\le N(d,\sigma)|Q_{\vec{i}}^n|.
$$
Thus, by (\ref{io3}), we have
\begin{align*}
&\fint_{Q_{\vec{i}}^n \ni (t_0,x_0) } |L^1 u(s,y) - {(L^1 u)}_{|n}(t_0,x_0)|+ |\lambda u(s,y) - {(\lambda u)}_{|n}(t_0,x_0)| \, dy \, ds\\
&\leq N \kappa^\tau \sum_{k=0}^{\infty} 2^{-k\sigma} \big[  \big( |L^1 u|^{p_0} \big)_{(t_1 -(r/\kappa)^\sigma,t_1)\times B_{2^kr/\kappa} (x_0)}^{1/{p_0}}+   \big( |\lambda u|^{p_0} \big)_{(t_1 -(r/\kappa)^\sigma,t_1)\times B_{2^kr/\kappa} (x_0)}^{1/{p_0}}\big]\\
&\quad + N \kappa^{- (d+\sigma )/p_0}\sum_{k = 0}^{\infty}
2^{-k\sigma} \big( |f|^{p_0} \big)_{(t_1 - r^\sigma,t_1)\times B_{2^kr/\kappa}(x_0)}^{1/{p_0}}\\
&\leq N \bigg(\kappa^\tau (\css \cmm | L^1 u |^{p_0} )^{\frac 1 {p_0}} + \kappa^\tau (\css \cmm | \lambda u |^{p_0} )^{\frac 1 {p_0}}  + \kappa^{- (d+\sigma )/p_0}
(\css \cmm | f |^{p_0} )^{\frac 1 {p_0}} \bigg)(t_0, x_0),
\end{align*}
where $N$ is independent of $n$ {and $\kappa$}, and for the last inequality, we used that for any function $g$ and $A \subset B$,
\[
\Bigg|  \fint_A g - \fint_B g \Bigg|
\leq \frac{|B|}{|A|} \fint_B | g - (g)_B|.
\]
Therefore,
\begin{align*}
&(L^1 u)^{\#}_{dy}(t_0,x_0) + (\lambda u)^{\#}_{dy}(t_0,x_0)\\
&\leq N \bigg(\kappa^\tau (\css \cmm | L^1 u |^{p_0} )^{\frac 1 {p_0}} + \kappa^\tau (\css \cmm | \lambda u |^{p_0} )^{\frac 1 {p_0}}  + \kappa^{- (d+\sigma )/p_0}
(\css \cmm | f |^{p_0} )^{\frac 1 {p_0}} \bigg)(t_0, x_0).
\end{align*}
Then, by the weighted sharp function theorem \cite[Corollary 2.7]{Ap} and the weighted maximal function theorem for strong maximal functions \cite[Theorem 5.2]{dong20},
\begin{align*}
\norm{L^1 u}_{p,q,w} + \norm{\lambda u}_{p,q,w} &\le N \kappa^\tau \norm{L^1 u}_{p,q,w} + N \kappa^\tau \norm{\lambda u}_{p,q,w} + N \kappa^{- (d+\sigma )/p_0} \norm{f}_{p,q,w},
\end{align*}
where $N = N (d,\nu,\Lambda,\sigma,p,q,M_1)$. Therefore, by taking a sufficiently small $\kappa < 1/4$ such that
$$
N \kappa^{\tau} <1/2,
$$
we get
\[\norm{L^1 u}_{L_{p,q,w} } + \lambda\norm{ u}_{L_{p,q,w} }
\leq
N \norm{f}_{L_{p,q,w} }. \]
Since the space of functions that are compactly supported in the spacial variables is dense in $\hh_{p,q,w,0}^{1,\sigma} ( T )$, we obtain \eqref{lo1} and the continuity of $\partial_t - L  $. Then \eqref{uo1} follows from the Equation \eqref{eqn6} together with the triangle in equality.

{\em Step 2. Existence of solutions.}

By the method of continuity, it suffices to prove the existence of solutions for the simple equation with $L = -\pf$ and $\lambda = 0$. By the a priori estimate proved in Step 1 and the density of smooth functions in Lebesgue spaces, without loss of generality, we assume that $f\in C_0^\infty((0,\infty)\times \rr^d)$.
Let \[u (t,x)= \int_0^t\int_{\rr^d} \zeta(t-s,x-y)f(s,y)\,dy\,ds, \stepcounter{equation}\tag{\theequation}\label{ur1}\]
where $\hat\zeta(t,\xi)= e^{-t|\xi|^\sigma}$.
Then by \cite[Lemma 4.1]{kim20},
\[
\partial_t u + \pf u= f.
\]

When $p=q$, it follows from \cite[Theorem 5.14]{LP}, a Fourier multiplier theorem, that
\[
\norm{\partial_t u}_{L_{p,w}(\rr^d_T) }+
\norm{\pf u}_{L_{p,w}(\rr^d_T) }
\leq
N \norm{f}_{L_{p,w}(\rr^d_T) }. \]
For general $p$ and $q$, using the same definition of $u$ as in \eqref{ur1} together with the extrapolation theorem in \cite[Theorem 2.5]{Ap} or \cite[Theorem 1.4]{extrapolation}, we have
\[
\norm{\partial_t u}_{L_{p,q,w} (\rr^d_T)}+
\norm{\pf u}_{L_{p,q,w} (\rr^d_T)}
\leq
N \norm{f}_{L_{p,q,w}(\rr^d_T) }, \]
which implies that $u \in H^{1,\sigma}_{p,q,w,0}(\rr^d_T)$. The theorem is proved.
\end{proof}

Next, we prove Corollary \ref{newco1} using the frozen coefficient argument and a partition of unity.
\begin{corollary}\label{9op}
Let $\beta \in (0,1)$, $\alpha \in (0,1]$, $\sigma \in (0,2)$, $T \in (0, \infty)$, $p \in (1, \infty)$, $\lambda \ge 0$, $M_1 \in [1, \infty)$, and $[w]_{p,p} \le M_1$.
There exists $r_0 = r_0(d, \nu,\Lambda, \alpha,\sigma, p , \beta, \omega,M_1) >0$ such that under Assumptions \ref{aaa1} and \ref{aaa2}, for any $u \in \hh_{p,w,0}^{\alpha, \sigma} ( \rr^d_T )$ supported on $(0,T) \times B_{r_0 }(x_1)$ for some $x_1\in \rr^d$ and satisfies
\[
\partial_t u - L u + \lambda u = f \qx{in } \rr^d_T,
\] we have
\begin{align*}
&\norm{\partial_t u}_{L_{p,w}  ( \rr^d_T )}
+ \norm{\pf u}_{L_{p,w}  ( \rr^d_T )}  + \lambda  \norm{u}_{L_{p,w}  ( \rr^d_T )}\\
&\leq
N \norm{f}_{L_{p,w}  ( \rr^d_T ) }+ N \norm{u}_{L_{p,w}  ( \rr^d_T ) },
\end{align*}
where $N = N(d, \nu, \Lambda, \sigma, p,M_1,\beta,\omega)$.
\end{corollary}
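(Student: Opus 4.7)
The plan is to run a frozen coefficient perturbation argument on top of Theorem \ref{main2}. Let $\bar{L}$ denote the operator with kernel $\bar{K}(t,y) := K(t,x_1,y)$, which depends only on $(t,y)$, satisfies Assumption \ref{aaa1} with the same constants $\nu,\Lambda$, and inherits \eqref{sig1} when $\sigma=1$. Rewrite the equation as
\begin{equation*}
\partial_t u - \bar{L} u + \lambda u = f + (L-\bar{L})u \qx{in} \rr^d_T,
\end{equation*}
so that Theorem \ref{main2} applied to $\bar L$ yields
\begin{equation*}
\norm{\partial_t u}_{L_{p,w}(\rr^d_T)} + \norm{\pf u}_{L_{p,w}(\rr^d_T)} + \lambda \norm{u}_{L_{p,w}(\rr^d_T)} \le N\norm{f}_{L_{p,w}(\rr^d_T)} + N\norm{(L-\bar{L})u}_{L_{p,w}(\rr^d_T)}.
\end{equation*}
The task thus reduces to controlling $\norm{(L-\bar{L})u}_{L_{p,w}(\rr^d_T)}$ by a small multiple of $\norm{\pf u}_{L_{p,w}(\rr^d_T)}$ plus a lower-order term.

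By Assumption \ref{aaa2} the kernel of $L-\bar L$ satisfies $|K(t,x,y)-K(t,x_1,y)|\le\omega(|x-x_1|)|y|^{-d-\sigma}$. I would split the $L_{p,w}$-norm of the remainder according to whether $x\in B_{2r_0}(x_1)$ or not. On $B_{2r_0}(x_1)$ the kernel is bounded by $\omega(2r_0)|y|^{-d-\sigma}$, and the moment condition \eqref{sig1} holds (when $\sigma=1$) since both $K(t,x,\cdot)$ and $K(t,x_1,\cdot)$ satisfy it. Invoking the weighted $L_{p,w}$-boundedness proved in \cite{dyk} (or equivalently obtained by combining Remark \ref{2.3} with the Rubio de Francia extrapolation theorem already used in the solvability step of Theorem \ref{main2}), one bounds this piece by $N\omega(2r_0)\norm{\pf u}_{L_{p,w}(\rr^d_T)}$. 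On $B_{2r_0}(x_1)^c$, since $u$ and $\nabla u$ vanish there, only the term $\int u(t,x+y)(K(t,x,y)-K(t,x_1,y))\,dy$ survives; the support constraint $x+y\in B_{r_0}(x_1)$ forces $|y|\ge|x-x_1|/2$, and the uniform bound \eqref{ass1} yields the pointwise estimate
\begin{equation*}
|(L-\bar L)u(t,x)|\le N|x-x_1|^{-d-\sigma}\norm{u(t,\cdot)}_{L_1(B_{r_0}(x_1))}.
\end{equation*}
Since $|x-x_1|^{-d-\sigma}$ is integrable on $B_{2r_0}(x_1)^c$, H\"older's inequality together with the $A_q$ property of $w_2$ reduces this contribution to $N(r_0)\norm{u}_{L_{p,w}(\rr^d_T)}$.

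Combining the two pieces gives $\norm{(L-\bar L)u}_{L_{p,w}(\rr^d_T)}\le N\omega(2r_0)\norm{\pf u}_{L_{p,w}(\rr^d_T)}+N(r_0)\norm{u}_{L_{p,w}(\rr^d_T)}$. Choosing $r_0$ small enough that $N\omega(2r_0)\le 1/2$ then allows absorption of the $\pf u$ term into the left-hand side, giving the desired estimate. The main obstacle I anticipate is the interior piece on $B_{2r_0}(x_1)$: establishing the weighted $L_{p,w}\to L_{p,w}$ boundedness of operators with bounded, symmetric (when $\sigma=1$) kernels by the $H^{\sigma}_{p,w}$-norm, with operator norm proportional to the kernel's uniform upper bound. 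This weighted bound is not immediate from the unweighted Remark \ref{2.3} and requires either appealing directly to \cite{dyk} or passing through an extrapolation step, which is the delicate point of the argument.
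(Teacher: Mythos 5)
Your proposal follows essentially the same approach as the paper: the authors' proof is simply the sentence ``This follows from [dyk, Lemma 5.1] and Theorem \ref{main2} with the frozen coefficient argument; see the proof of [dyk, Lemma 5.2] for details.'' What you have done is unpack that frozen-coefficient argument explicitly — writing $\partial_t u -\bar L u + \lambda u = f + (L-\bar L)u$ with $\bar L$ having the kernel $K(t,x_1,y)$, applying Theorem \ref{main2}, and then splitting the control of $(L-\bar L)u$ into the near piece on $B_{2r_0}(x_1)$ (where the kernel difference is uniformly small, $\le\omega(2r_0)|y|^{-d-\sigma}$, and one invokes the weighted boundedness of nonlocal operators with bounded kernels from [dyk, Lemma 5.1]) and the far piece on $B_{2r_0}(x_1)^c$ (where, since $u$ and $\nabla u$ vanish, the operator reduces to a convolution with a tail bounded by $|x-x_1|^{-d-\sigma}$, contributing only a zeroth-order term). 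Your decomposition is precisely the content of [dyk, Lemma 5.2], which the paper defers to. You also correctly identify the delicate ingredient as the $L_{p,w}\to L_{p,w}$ bound for a nonlocal operator with only a uniform upper bound on the kernel and H\"older continuity in $x$; that is exactly what [dyk, Lemma 5.1] supplies, and the fact that the relevant constant there scales with the sup-norm of $|y|^{d+\sigma}\hat K$ (with the H\"older modulus entering only the lower-order term) is what makes absorption possible after choosing $r_0$ with $N\omega(2r_0)\le 1/2$. In short: same route, with the details of the cited lemmas spelled out.
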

\begin{proof}
This follows from {\cite[Lemma 5.1]{dyk}} and Theorem \ref{main2} with the frozen coefficient argument. {See the proof of \cite[Lemma 5.2]{dyk} for details.}
\end{proof}

\begin{proof}[Proof of Corollary \ref{newco1}]
First, the continuity of $\partial_t - L $ follows by \cite[Corollary 2.5]{dyk}. By Theorem \ref{main} together with the method of continuity, it remains to prove the a priori estimate \eqref{up6}.

We first consider the case when $b=c= 0$ and $p=q$ with general weight $w(t,x) = w_1(t)w_2(x)$. We take $r_0$ from Corollary \ref{9op}, and use a partition of unity argument with respect to the spatial variables. Let $\zeta \in C_0^\infty (B_{r_0})$ satisfy
$$\norm{\zeta}_{L_p} = 1, \quad \norm{D_x \zeta}_{L_p} \le N ,\quad \norm{D^2_x\zeta}_{L_p}  \leq N, \qx{and} \zeta_z(x) = \zeta(x-z),$$
where $z\in \rr^d$ and $N =N (d, \nu, \Lambda, \sigma, p,\beta ,\omega)$.
It follows that
\begin{align*}
\partial_t(\zeta_z u) -L (\zeta_z u) + \lambda \zeta_z u &= \zeta_zf +\zeta_z Lu -L(\zeta_z u).
\end{align*}
By Corollary \ref{9op}, for any $\ep\in(0,1)$,
\begin{align*}
&\norm{\partial_t u }_{L_{p,w}(\rr^d_T)}^p  + \norm{\pf u }_{L_{p,w}(\rr^d_T)}^p +  \norm{\lambda u }_{L_{p,w}(\rr^d_T)}^p  \\
&= \int_0^T\int_{\rr^d} \int_{\rr^d} \big(|\zeta_z(x)\partial_t u(t,x)|^p + |\zeta_z(x)\lambda u(t,x)|^p\big) w(t,x)  w(t,x)\,dz \,dx  \,dt\\
&\quad + \int_0^T\int_{\rr^d} \int_{\rr^d} |\zeta_z(x)\pf u(t,x)|^p w(t,x)\,dz \,dx  \,dt\\
&\le \int_0^T\int_{\rr^d} \int_{\rr^d} \big( |\partial_t \big( \zeta_z(x) u(t,x)\big)|^p + |\lambda \zeta_z(x) u(t,x)|^p \big) w(t,x)\,dz \,dx  \,dt\\
&\quad + N \int_0^T\int_{\rr^d} \int_{\rr^d} |\pf(\zeta_z u)(t,x)|^p w(t,x) \,dx \,dz \,dt\\
&\quad + N \int_0^T\int_{\rr^d} \int_{\rr^d} |\zeta_z(x)\pf u(t,x) -\pf(\zeta_z u)(t,x)|^pw(t,x) \,dx \,dz\,dt\\
&\le N\int_0^T \int_{\rr^d} \int_{\rr^d} \big(|\zeta_z(x) f(t,x)|^p + |\zeta_z(x) u(t,x)|^p \big) w(t,x)\,dx \,dz\,dt
\\
&\quad +  N \int_0^T\int_{\rr^d} \int_{\rr^d} |\zeta_z(x)L u(t,x) -L(\zeta_z u)(t,x)|^p w(t,x)\,dx \,dz\,dt\\
&\quad + N \int_0^T\int_{\rr^d} \int_{\rr^d} |\zeta_z(x)\pf u(t,x) -\pf(\zeta_z u)(t,x)|^pw(t,x) \,dx \,dz\,dt\\
&\le N\norm{f}^p_{L_{p,w}(\rr^d_T)} + N \norm{u}^p_{L_{p,w}(\rr^d_T)}
+ 1_{\sigma>1} N \norm{Du}^p_{L_{p,w}(\rr^d_T)} + 1_{\sigma=1} \ep  \norm{Du}^p_{L_{p,w}(\rr^d_T)}.
\stepcounter{equation}\tag{\theequation}\label{0pi}\end{align*}
Indeed, for the last inequality, to estimate
\begin{align*}
\int_0^T\int_{\rr^d} \int_{\rr^d} |\zeta_z(x)L u(t,x) -L(\zeta_z u)(t,x)|^p w_2(x) w_1(t)\,dx \,dz\,dt,
\end{align*}
we consider the following three cases depending on the value of $\sigma$.
When $\sigma \in(0,1)$,
\begin{align*}
&|\zeta_z(x)L u(t,x) -L(\zeta_z u)(t,x)| \\
&\le N \bigg(\int_{B_1} + \int_{B^c_1} \bigg) |\zeta_z(x+y) - \zeta_z(x)\|u(t,x+y) \|y|^{-d-\sigma}\,dy=: I^1_{1} +  I^1_{2}.
\end{align*}
Since
\begin{align*}
I^1_{1}
&\le \int_{B_1} |u(t,x+y) \|y|^{-d-\sigma{+1}}   \int_0^1|D\zeta_z(x+sy)| \,ds \,dy,
\end{align*}
by the Minkowski inequality, \cite[Lemma 3.2]{dyk}, {and the weighted Hardy-Littlewood maximal function theorem,} we conclude that
\begin{align*}
&\int_0^T\int_{\rr^d} \int_{\rr^d}(I^1_{1}(t,x,z ))^p w_2(x) w_1(t)\,dx \,dz\,dt, \\
&\le \norm{ D\zeta}^p_{L_p(\rr^d)} \int_0^T\int_{\rr^d} \big(\int_{B_1} |u(t,x+y)| |y|^{-d-\sigma + 1}\,dy\big)^p w_2(x)w_1(t) \,dx \,dt,\\
&\le N \int_0^T\int_{\rr^d}\cmm_x u(t,x)^p w_2(x) w_1(t) \,dx \,dt \le  N \norm{u}^p_{L_{p,w}(\rr^d_T)},
\end{align*}
and
\begin{align*}
&\int_0^T\int_{\rr^d} \int_{\rr^d}(I^1_{2}(t,x,z ))^p w_2(x) w_1(t)\,dx \,dz\,dt, \\
&\le N \norm{ \zeta}^p_{L_p(\rr^d)} \int_0^T\int_{\rr^d} \big(\int_{B^c_1} |u(t,x+y)| |y|^{-d-\sigma }\,dy\big)^p w_2(x)w_1(t) \,dy \,dt \\
&{\le N \norm{ \zeta}^p_{L_p(\rr^d)} \int_0^T\int_{\rr^d} \cmm_x u(t,x)^p w_2(x)w_1(t) \,dy \,dt} \\
&\le  N \norm{u}^p_{L_{p,w}(\rr^d_T)}.
\end{align*}
{Here $\cmm_x u(t,\cdot)$ stands for the maximal function of $u$ with respect to the $x$ variable.}
Furthermore, using a similar decomposition {as} in the proof of Lemma \ref{com} together with the maximal function as above, when $\sigma \in(1,2)$,
\begin{align*}
I \le N \norm{u}^p_{L_{p,w}(\rr^d_T)}
+ 1_{\sigma>1} N \norm{Du}^p_{L_{p,w}(\rr^d_T)},
\end{align*}
and when $\sigma = 1 $,
 \begin{align*}
I \le N(\ep) \norm{u}^p_{L_{p,w}(\rr^d_T)} + \ep  \norm{Du}^p_{L_{p,w}(\rr^d_T)}.
\end{align*}
Therefore, by \eqref{0pi} together with Lemma \ref{a1} or picking {a} sufficiently small $\ep$ when $\sigma =1$, we obtain
\begin{align*}
&\norm{\partial_t u}_{L_{p,w}(\rr^d_T)} + \norm{\pf u}_{L_{p,w}(\rr^d_T)}+\lambda \norm{ u}_{L_{p,w}(\rr^d_T)}  \\
&\leq
N\norm{f}_{L_{p,w}(\rr^d_T)} + N \norm{u}_{L_{p,w}(\rr^d_T)}. \stepcounter{equation}\tag{\theequation}\label{uy2}
\end{align*}
By taking $\lambda_0 = \max(2N,1)$, for all $\lambda \ge \lambda_0$, we have \eqref{89oii}.
On the other hand, when $\lambda= 0$, by subdividing $(0,T)$ into sufficiently small sub-intervals, taking cutoff functions in time, and applying an induction argument as in \cite[Theorem 2.2]{dong20}, we get
\begin{align*}
\norm{ u}_{L_{p,w}(\rr^d_T)}  \leq
N(T)\norm{f}_{L_{p,w}(\rr^d_T)},
\end{align*}
which together with \eqref{uy2} implies \eqref{89oi}.

For general $b$ and $c$ and $p=q$, we have
\[
\partial_t u
- L u
= f -b^iD_iu 1_{\sigma \ge 1}- cu.
\]
Thus, if $\sigma > 1$ {\eqref{uy2}} follows from the case when $b=c = 0$ and Lemma \ref{a1}.
If $\sigma = 1$, then
\begin{align*}
&\norm{\partial_t u}_{L_{p,w}(\rr^d_T)} + \norm{Du}_{L_{p,w}(\rr^d_T)}\\
&\leq
N\norm{f}_{L_{p,w}(\rr^d_T)} + N \norm{u}_{L_{p,w}(\rr^d_T)} + N \norm{bDu}_{L_{p,w}(\rr^d_T)}.\stepcounter{equation} \tag{\theequation}\label{uy}
\end{align*}
If $b$ is sufficiently small such that $N\norm{b}_{L_\infty(\rr^d_T)} \le 1/2$, then by absorbing the last term on the right-hand side of \eqref{uy} to the left, we obtain \eqref{uy2}. On the other hand, if $\alpha =1$ and $b$ is uniformly continuous, we take
\[
B(t) = \int_0^t b(s,0) \, ds, \quad \Tilde{b}(t,x) = b(t,x-B(t)), \quad \Tilde{c}(t,x) = c(t,x-B(t)),\]
and \[
\quad \Tilde{u}(t,x) = u(t,x-B(t)), \quad \Tilde{f}(t,x) = f(t,x-B(t)).
\]
It follows that
\[
\partial_t \Tilde{u}(t,x) - \Tilde{L}\Tilde{u}(t,x) + (\Tilde{b}^i(t,x) - b^i(t,0))D_i\Tilde{u}(t,x) + \Tilde{c} \Tilde{u}(t,x) = \Tilde{f}(t,x),\stepcounter{equation}\tag{\theequation}\label{uy3}
\]
where $\Tilde{L}$ is the operator with the kernel $\Tilde{K}(t,x,y) = K(t,x-B(t),y)$. Then, by moving the last two terms on the left-hand side of \eqref{uy3} to the right-hand side and applying the estimate to $\Tilde{u}$ together with a change of variables, we have
\begin{align*}
\norm{D u}_{L_{p,w}(\rr^d_T)} \leq
N\norm{f}_{L_{p,w}(\rr^d_T)} + N \norm{u}_{L_{p,w}(\rr^d_T)} + N \norm{(b - b(\cdot,0))Du}_{L_{p,w}(\rr^d_T)},
\end{align*}
which implies that there exists $r_0>0$ depending on the modulus of continuity of $b$ such that if $u$ is supported on $(0,T) \times B_{r_0 }$, then
\begin{align*}
 \norm{Du}_{L_{p,w}(\rr^d_T)} \leq
N\norm{f}_{L_{p,w}(\rr^d_T)} + N \norm{u}_{L_{p,w}(\rr^d_T)}.
\end{align*}
Therefore, by a partition of unity and \eqref{uy}, we arrive at \eqref{uy2}, and \eqref{89oi} follows.

Finally, for general $p,q$, we use the extrapolation theorems in \cite[Theorem 2.5]{Ap} or \cite[Theorem 1.4]{extrapolation}. The Corollary is proved.
\end{proof}

\begin{proof}[Proof of Corollary \ref{newco}]
With Theorem \ref{main}, the proof of Corollary \ref{newco} proceeds similar to that of Corollary \ref{newco1}, using the frozen coefficient argument and a partition of unity.
\end{proof}

\appendix
\section{}\label{A}
\begin{lemma}[An interpolation inequality in the spatial variables]\label{a1}
Let $\sigma \in (1,2)$, $p\in (1,\infty)$, and $u \in H_p^{\sigma}(\rr^d)$. For any $\ep>0$,
\begin{align*}
\norm{Du}_{L_p(\rr^d)} \le N (d,\sigma,p) \ep^{1/(1- \sigma)}\norm{u}_{L_p(\rr^d)} + \ep\norm{(-\Delta)^{\sigma/2}u}_{L_p(\rr^d)}.\stepcounter{equation}\tag{\theequation}\label{int1}
\end{align*}
\end{lemma}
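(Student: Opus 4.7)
\smallskip

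\noindent\textbf{Proof proposal.} The strategy is a two-step argument: first establish an unparameterized version of the inequality via the Mikhlin multiplier theorem, and then use a dilation to introduce the free parameter $\varepsilon$.

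For the first step, I would show
\[
\norm{Du}_{L_p(\rr^d)} \le N_0(d,\sigma,p)\bigl(\norm{u}_{L_p(\rr^d)} + \norm{(-\Delta)^{\sigma/2}u}_{L_p(\rr^d)}\bigr).
\]
Writing $\widehat{D_ju}(\xi) = i\xi_j\hat u(\xi) = m_j(\xi)\bigl(1+|\xi|^\sigma\bigr)\hat u(\xi)$ with $m_j(\xi):=i\xi_j/(1+|\xi|^\sigma)$, it suffices to check that $m_j$ is a Mikhlin multiplier when $\sigma>1$. For large $|\xi|$ one has $|\xi|^{|\alpha|}|\partial^\alpha m_j(\xi)| \lesssim |\xi|^{1-\sigma}$, which is bounded since $\sigma>1$; for small $|\xi|$, $m_j$ behaves like $i\xi_j$ up to $O(|\xi|^{1+\sigma})$, giving $|\xi|^{|\alpha|}|\partial^\alpha m_j(\xi)| \lesssim |\xi|$, also bounded. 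Thus the Mikhlin–Hörmander condition holds and the Fourier multiplier is $L_p$-bounded. Since $(1+|\xi|^\sigma)\hat u$ is the Fourier transform of $u+(-\Delta)^{\sigma/2}u$, the claimed estimate follows.

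For the second step, I would use scaling. Set $u_\lambda(x):=u(\lambda x)$ for $\lambda>0$. A change of variables gives
\[
\norm{Du_\lambda}_{L_p} = \lambda^{1-d/p}\norm{Du}_{L_p},\quad \norm{u_\lambda}_{L_p} = \lambda^{-d/p}\norm{u}_{L_p},\quad \norm{(-\Delta)^{\sigma/2}u_\lambda}_{L_p} = \lambda^{\sigma-d/p}\norm{(-\Delta)^{\sigma/2}u}_{L_p}.
\]
Applying the Step 1 inequality to $u_\lambda$ and dividing by $\lambda^{1-d/p}$ yields
\[
\norm{Du}_{L_p} \le N_0\bigl(\lambda^{-1}\norm{u}_{L_p} + \lambda^{\sigma-1}\norm{(-\Delta)^{\sigma/2}u}_{L_p}\bigr).
\]
Choosing $\lambda$ so that $N_0\lambda^{\sigma-1}=\varepsilon$, i.e., $\lambda = (\varepsilon/N_0)^{1/(\sigma-1)}$, makes the second term equal to $\varepsilon\norm{(-\Delta)^{\sigma/2}u}_{L_p}$, while the first term picks up a coefficient of order $\varepsilon^{-1/(\sigma-1)}=\varepsilon^{1/(1-\sigma)}$, as claimed.

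The only mildly delicate point is the verification of the Mikhlin condition at $\xi=0$, where $|\xi|^\sigma$ is not smooth; this is precisely where the hypothesis $\sigma>1$ is used, since it guarantees the correct decay $|\xi|^{1-\sigma}$ in the derivative estimates. Everything else is routine.
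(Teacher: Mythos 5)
Your proof is correct and follows essentially the same route as the paper: establish the unparameterized estimate via the Mikhlin multiplier theorem applied to $m(\xi)=i\xi/(1+|\xi|^\sigma)$, then rescale to introduce the free parameter. The scaling computation and the choice of $\lambda$ are done correctly. One small inaccuracy in your commentary: the place where $\sigma>1$ is actually needed is at $|\xi|\to\infty$ (where the derivative estimates decay like $|\xi|^{1-\sigma}$), not at $\xi=0$; near the origin your own computation shows $|\xi|^{|\alpha|}|\partial^\alpha m_j|\lesssim |\xi|$ regardless, so the lack of smoothness of $|\xi|^\sigma$ at the origin poses no obstruction for any $\sigma>0$. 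This does not affect the validity of the argument.
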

\begin{proof}
By taking $f : = u + (-\Delta)^{\sigma/2}u$, we have
$$
\cff(Du)(\xi) = \frac{i\xi}{1 + |\xi|^{\sigma}}\cff(f)(\xi) =: m(\xi)\cff(f)(\xi).
$$
Since $\sigma >1$, the multiplier $m(\xi)$ satisfies $$\sup_{\xi\in\rr^d} |\xi|^{|\gamma|}|\partial^{\gamma}m| \le N(d,\sigma,p),$$ for any multi-index $\gamma$ of length $|\gamma|\le d/2+1$. Thus, by the Mikhlin multiplier theorem, we have
\begin{align*}
    \norm{Du}_{L_p(\rr^d)} \le N\norm{f}_{L_p(\rr^d)} \le N\norm{u}_{L_p(\rr^d)} + N\norm{(-\Delta)^{\sigma/2}u}_{L_p(\rr^d)},\stepcounter{equation}\tag{\theequation}\label{int}
\end{align*}
where $N = N (d,\sigma,p)$. Then taking $v(\cdot) := u(\ep\cdot)$ and applying \eqref{int} to $v$ yield
\begin{align*}
    \ep\norm{Du}_{L_p(\rr^d)} \le N\norm{u}_{L_p(\rr^d)} + N\ep^{\sigma} \norm{(-\Delta)^{\sigma/2}u}_{L_p(\rr^d)},
\end{align*}
which implies \eqref{int1}. The lemma is proved.
\end{proof}
\begin{lemma}\label{a2}
Let $\alpha \in (0,1]$, $T \in (0, \infty)$, $p,q \in (1, \infty)$.
For any $u \in \hh_{p,q,0}^{\alpha, \sigma} ( T )$, we have
$$\norm{u}_{L_{p,q} ( \rr^d_T )} \le N T^\alpha \norm{\partial_t^\alpha u}_{L_{p,q} ( \rr^d_T )},  $$
where $N = N(\alpha,p,q)$.
\end{lemma}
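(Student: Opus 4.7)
The plan is to reduce to the representation $u = I_0^\alpha(\partial_t^\alpha u)$ and then apply Minkowski's integral inequality in $x$ followed by Young's convolution inequality in $t$. By the density built into the definition of $\hh_{p,q,0}^{\alpha,\sigma}(T)$, it suffices to prove the inequality for a smooth $u \in C^\infty([0,T]\times\rr^d)$ with $u(0,\cdot)=0$ and compact support in $x$, since the quantities on both sides depend continuously on $u$ in the $\hh_{p,q,0}^{\alpha,\sigma}$-norm.

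For such $u$, I would first record the identity
\[
u(t,x)=I_0^\alpha(\partial_t^\alpha u)(t,x)=\frac{1}{\Gamma(\alpha)}\int_0^t (t-s)^{\alpha-1}\,\partial_s^\alpha u(s,x)\,ds,
\]
valid for $\alpha\in(0,1)$ (by Fubini and the Beta-function identity $B(\alpha,1-\alpha)=\Gamma(\alpha)\Gamma(1-\alpha)$, applied to the definitions of $\partial_t^\alpha$ and $I_0^\alpha$) and trivially for $\alpha=1$ by the fundamental theorem of calculus. Minkowski's integral inequality in $L_q(\rr^d)$ then gives
\[
\norm{u(t,\cdot)}_{L_q(\rr^d)}\le \frac{1}{\Gamma(\alpha)}\int_0^t (t-s)^{\alpha-1}\norm{\partial_s^\alpha u(s,\cdot)}_{L_q(\rr^d)}\,ds,
\]
i.e.\ the scalar function $G(t):=\norm{u(t,\cdot)}_{L_q}$ is dominated by the convolution of $k(t):=t_+^{\alpha-1}\mathbf{1}_{[0,T]}(t)/\Gamma(\alpha)$ with $F(t):=\norm{\partial_t^\alpha u(t,\cdot)}_{L_q}$.

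The last step is to take the $L_p(0,T)$ norm in $t$. Young's convolution inequality gives $\norm{k\ast F}_{L_p(0,T)}\le \norm{k}_{L_1(0,T)}\norm{F}_{L_p(0,T)}$, and
\[
\norm{k}_{L_1(0,T)}=\frac{T^\alpha}{\alpha\,\Gamma(\alpha)}=\frac{T^\alpha}{\Gamma(\alpha+1)},
\]
yielding $\norm{u}_{L_{p,q}(\rr^d_T)}\le \Gamma(\alpha+1)^{-1}T^\alpha \norm{\partial_t^\alpha u}_{L_{p,q}(\rr^d_T)}$, with constant depending only on $\alpha$ (hence a fortiori on $\alpha,p,q$). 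Alternatively, one can avoid Young's inequality and apply Hölder's inequality directly to the convolution representation to obtain the same bound.

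There is no real obstacle here; the only minor subtlety is justifying the representation $u=I_0^\alpha\partial_t^\alpha u$, which for $\alpha\in(0,1)$ is a routine Fubini calculation for smooth $u$ with $u(0,\cdot)=0$, and then extending to general $u\in\hh_{p,q,0}^{\alpha,\sigma}(T)$ by the defining approximating sequence.
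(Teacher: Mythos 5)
Your proof is correct and follows essentially the same route as the paper: both reduce the statement to the representation $u = I_0^\alpha\,\partial_t^\alpha u$ and then bound the fractional integral $I_0^\alpha$. The paper simply cites \cite[Lemma 5.5]{dong20} for that bound, whereas you supply the Minkowski--Young argument directly, which is fine and even gives the explicit constant $T^\alpha/\Gamma(\alpha+1)$.
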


\begin{proof} Note that
$u = I^\alpha \partial_t^\alpha u.$
Therefore, the result follows from \cite[Lemma 5.5]{dong20}.
\end{proof}

\begin{lemma}\label{com}
Let $L$ satisfy \eqref{sig1} when $\sigma=1$ and \eqref{ass}. Recall the definition of $\tilde\sigma$ in \eqref{bdds}. For $k = 0,1,\ldots,$, let $\zeta_k$ be the cutoff functions defined in \eqref{cutt}. Under the same assumptions and the same notation as in Lemma \ref{local}, we have the following estimates for $I_k := \norm{\zeta_k Lu -L(\zeta_k u) }_{L_p(\rr^d_T)}$.
\begin{enumerate}
    \item When $\sigma\in(0,1)$,
\begin{align*}
I_k\le   N  \frac{2^{\sigma k}}{(R-r)^{\sigma}} \norm{u}_{p,R}  + N \frac{2^{(d+\sigma)k}}{(R-r)^{d+\sigma}}R^{d/p}(1 + R^{d+\sigma} ) \norm{u}_{L_p((0,T);L_1(\rr^d,\psi))}; \stepcounter{equation}\tag{\theequation}\label{a41}
\end{align*}
    \item when $\sigma\in(1,2)$,
    \begin{align*}
I_k &\le  N \frac{2^{(\sigma-1) k}}{(R-r)^{\sigma-1}} \norm{Du}_{p,r_{k+3}} + N  \frac{2^{\sigma k}}{(R-r)^{\sigma}} \norm{u}_{p,R}  \\
&\quad+ N \frac{2^{(d+\sigma)k}}{(R-r)^{d+\sigma}}R^{d/p}(1 + R^{d+\sigma} ) \norm{u}_{L_p((0,T);L_1(\rr^d,\psi))}; \stepcounter{equation}\tag{\theequation}\label{a42}
\end{align*}
    \item when $\sigma =1 $, for any $\ep \in (0,1)$,
    \begin{align*}
I_k   &\le \ep^3\norm{Du}_{p,r_{k+3}} +  N\frac{2^{k}}{R-r}\ep^{-3} \norm{u}_{p,R} \\
&\quad +  N R^{d/p}\bigg(1+ \frac{2^{(d+1)k}\ep^{-3(d+1)}}{(R-r)^{d+1}}  (1 + R^{d+1} )\bigg) \norm{u}_{L_p((0,T);L_1(\rr^d,\psi))}, \stepcounter{equation}\tag{\theequation}\label{a43}
\end{align*}
\end{enumerate}
where $N = N(d,\Lambda,\tilde\sigma)$ and $\norm{\, \cdot\, }_{p,r} := \norm{\,\cdot\,}_{L_p((0,T) \times B_r)}$ for any $r> 0 $.
\end{lemma}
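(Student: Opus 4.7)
The plan is to compute the commutator pointwise and split the resulting integral into a near region $\{|y|\le\delta\}$ and a tail region $\{|y|>\delta\}$ for a suitable $\delta = \delta(R-r, k, \sigma,\ep)$, estimating each piece by different techniques. A direct calculation gives
\[
\zeta_k(x) Lu - L(\zeta_k u) = \int_{\rr^d} (\zeta_k(x)-\zeta_k(x+y))u(t,x+y) K\, dy + u(t,x)\nabla\zeta_k(x)\cdot\int_{\rr^d} y\chi^{(\sigma)}(y) K\, dy.
\]
For $\sigma\in(0,1)$ the drift vanishes since $\chi^{(\sigma)}\equiv 0$. With $\delta=(R-r)/2^{k+2}$, estimate the near field via $|\zeta_k(x)-\zeta_k(x+y)|\le \norm{\nabla\zeta_k}_\infty|y|$ and a dyadic decomposition to obtain the pointwise bound $\lesssim \frac{2^k}{R-r}\delta^{1-\sigma}\cmm u(x)\simeq \frac{2^{k\sigma}}{(R-r)^\sigma}\cmm u(x)$; Hardy--Littlewood then yields the second term of \eqref{a41}. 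The far field uses $|\zeta_k(x)-\zeta_k(x+y)|\le 2$ together with the tail estimate
\[
\int_{|z-x|>\delta}\frac{|u(z)|}{|z-x|^{d+\sigma}}\, dz \le \frac{N(1+R^{d+\sigma})}{\delta^{d+\sigma}}\norm{u(t,\cdot)}_{L_1(\rr^d,\psi)} \qquad (x\in B_R),
\]
obtained by splitting $|z|\le 2R$ versus $|z|>2R$ and using $|z-x|\ge |z|/2$ in the latter; this produces the third term of \eqref{a41}.

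For $\sigma\in(1,2)$, rearrange the commutator as
\[
\int [\zeta_k(x)-\zeta_k(x+y)+y\cdot\nabla\zeta_k(x)]u(x+y)K\, dy + \nabla\zeta_k(x)\cdot\int y(u(x)-u(x+y))K\, dy.
\]
The bracket in the first integral is $O(\norm{D^2\zeta_k}_\infty|y|^2)$ on $\{|y|\le\delta\}$ and delivers the $\frac{2^{k\sigma}}{(R-r)^\sigma}\norm{u}_{p,R}$ contribution exactly as in the previous case. In the second integral, write $u(x)-u(x+y)=-\int_0^1 y\cdot\nabla u(x+sy)\, ds$ and apply Minkowski's inequality to get $\lesssim \frac{2^k}{R-r}\delta^{2-\sigma}\norm{Du}_{L_p((0,T)\times B_{r_{k+1}+\delta})}\simeq \frac{2^{(\sigma-1)k}}{(R-r)^{\sigma-1}}\norm{Du}_{p,r_{k+3}}$. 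Crude $L_\infty$ bounds on the rearranged bracket together with the tail estimate above cover the far contributions, yielding \eqref{a42}.

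For $\sigma=1$, the cancellation \eqref{sig1} forces $\int_{|y|\le 1} yK(t,x,y)\, dy=0$ by integration in spherical coordinates, so the drift integral in the commutator vanishes identically and only $\int(\zeta_k(x)-\zeta_k(x+y))u(x+y)K\, dy$ survives. Choose $\delta=\ep^3(R-r)/2^k$ and Taylor-expand $\zeta_k$ to second order on $\{|y|\le\delta\}$: the linear-in-$y$ piece $-\nabla\zeta_k(x)\cdot\int_{|y|\le\delta}yu(x+y)K\, dy$ is rewritten using $\int_{|y|\le\delta}yK\, dy=0$ as $-\nabla\zeta_k(x)\cdot\int_{|y|\le\delta}y(u(x+y)-u(x))K\, dy$, for which Minkowski yields the $\ep^3\norm{Du}_{p,r_{k+3}}$ term; the quadratic Taylor remainder produces $\lesssim \frac{\ep^3 2^k}{R-r}\norm{u}_{p,R}$, absorbed into the $\ep^{-3}\frac{2^k}{R-r}\norm{u}_{p,R}$ bound. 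On $\{|y|>\delta\}$, the tail estimate with $\delta^{-(d+1)}=2^{(d+1)k}\ep^{-3(d+1)}/(R-r)^{d+1}$ delivers the last term of \eqref{a43}. The principal obstacle is precisely this borderline $\sigma=1$ case: without the oddness hypothesis \eqref{sig1}, neither the moment $\int yK\, dy$ nor the first-order Taylor remainder $\int(\zeta_k(x)-\zeta_k(x+y))u(x+y)|y|^{-d-1}\, dy$ converges absolutely near $y=0$, and the naive mean-value estimate produces a logarithmically divergent $\int |y|^{-d}\, dy$. The cancellation must be invoked precisely to pair $u(x+y)$ with $u(x)$ before bounding; the $\ep$-parameter then appears so that when Lemma \ref{com} is iterated in the proof of \eqref{es2}, the resulting $\norm{Du}_{p,r_{k+3}}$ term can be reabsorbed via the interpolation Lemma \ref{a1}.
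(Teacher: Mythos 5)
Your overall strategy for Cases 1 and 3 lines up well with the paper's argument in Lemma \ref{com}: split into a near region of radius $\delta\sim (R-r)2^{-k}$ (or $\ep^3(R-r)2^{-k}$ when $\sigma=1$) and a far tail, use the first- or second-order Taylor bound on $\zeta_k$ near the diagonal, the cancellation \eqref{sig1} to pair $u(x+y)$ with $u(x)$ when $\sigma=1$, and a $\psi$-weighted tail estimate away from the diagonal. The paper uses Minkowski's inequality where you invoke the maximal function and Hardy--Littlewood for $\sigma\in(0,1)$; both give the same bound, provided you track supports to keep the estimate localized to $B_R$.

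For $\sigma\in(1,2)$, however, your rearrangement
\[
\int \big[\zeta_k(x)-\zeta_k(x+y)+y\cdot\nabla\zeta_k(x)\big]u(x+y)K\, dy
+ \nabla\zeta_k(x)\cdot\int y\big(u(x)-u(x+y)\big)K\, dy
\]
pairs the quadratic Taylor remainder with $u(x+y)$ and the linear correction with $u(x+y)-u(x)$, whereas the paper's decomposition (equation \eqref{long} split as $(\zeta_k(x+y)-\zeta_k(x))(u(x+y)-u(x))$ plus $(\zeta_k(x+y)-\zeta_k(x)-y\cdot\nabla\zeta_k(x))u(x)$) keeps the Taylor remainder attached to $u(x)$. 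This matters in the far field. On $\{|y|>\delta\}$ the bracket $\zeta_k(x)-\zeta_k(x+y)+y\cdot\nabla\zeta_k(x)$ is \emph{not} bounded (the drift part grows linearly in $|y|$), so ``crude $L_\infty$ bounds'' do not apply as stated. If you estimate each rearranged piece separately on $\{|y|>\delta\}$, you encounter terms of the form $\nabla\zeta_k(x)\cdot\int_{|y|>\delta}y\,u(t,x+y)\,K\,dy$, whose absolute value involves $\int_{|y|>\delta}|y|^{1-d-\sigma}|u(t,x+y)|\,dy$. This quantity is \emph{not} controlled by $\norm{u(t,\cdot)}_{L_1(\rr^d,\psi)}$, since $|y|^{1-d-\sigma}/\psi(y)\sim |y|$ is unbounded; it is controlled by $\norm{u(t,\cdot)}_{L_p(\rr^d)}$, which is not one of the three quantities on the right of \eqref{a42}. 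These problematic contributions in fact cancel exactly between the two rearranged integrals (their sum is the original far-field commutator, which only sees the bounded factor $\zeta_k(x)-\zeta_k(x+y)$ against $u(x+y)$ and the drift against $u(x)$), but you would need to state this cancellation explicitly or, more simply, revert to the paper's grouping so that the linear drift always multiplies $u(x)$ in the tail region. A smaller point: the $\ep$-absorption for $\sigma=1$ uses the geometric-series argument in the proof of Lemma \ref{local}, not Lemma \ref{a1} (which is the interpolation inequality for $\sigma\in(1,2)$).
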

\begin{proof}
In this proof, we use $N$ to denote a constant which may depend on $d$, $\Lambda$, and $\tilde\sigma$. Note that $N$ does not depend on $\nu$, the lower bound of the operator.

Next, let $\Tilde{r}_k = r_{k+3} - r_{k+2}$, and note that
\begin{align*}
&L(\zeta_k u)(t,x) - \zeta_k Lu(t,x) \\
&= \int_{\rr^d}\bigg((\zeta_k(x+y) - \zeta_k(x))u(t,x+y) - y\cdot \nabla\zeta_k(x) u(t,x)\chi^{(\sigma)}(y)\bigg)K(t,y)\,dy. \stepcounter{equation}\tag{\theequation}\label{long}
\end{align*}
We are going to split the proof into 3 cases depending on the value of $\sigma$.

{\em Case 1: $\sigma\in(0,1)$.}
In this case, by \eqref{long},
\begin{align*}
&|L(\zeta_k u)(t,x) - \zeta_k Lu(t,x)|\\
&\le N \int_{\rr^d}|\zeta_k(x+y) - \zeta_k(x)\|u(t,x+y) \|y|^{-d-\sigma}\,dy\\
&\le N \bigg(\int_{B_{\Tilde{r}_k}} + \int_{B^c_{\Tilde{r}_k}} \bigg) |\zeta_k(x+y) - \zeta_k(x)\|u(t,x+y) \|y|^{-d-\sigma}\,dy=: I^1_{k,1} +  I^1_{k,2}.
\end{align*}
For $I^1_{k,1}$, since $y \in B_{\Tilde{r}_k} $, we have
\[|\zeta_k(x+y) - \zeta_k(x)|
\le  \norm{D\zeta_k}_{L_{\infty}}|y|1_{|x|<r_{{k+2}}}
\le N \frac{2^k}{R-r}|y|1_{|x|<r_{{k+2}}}, \stepcounter{equation}\tag{\theequation}\label{001}\]
which together with the Minkowski inequality implies that
\begin{align*}
\norm{I^1_{k,1}}_{L_p(\rr^d_T)} &\le N\frac{2^k}{R-r} \int_{B_{\Tilde{r}_k}} \norm{u(\cdot,\cdot +y)}_{p,r_{k+2}}  |y|^{1-d-\sigma} \, dy \\
&\le N\frac{2^k}{R-r}  \norm{u}_{p,R} \int_{B_{\Tilde{r}_k}} |y|^{1-d-\sigma} \, dy \\
&\le N (d,\sigma_1)\frac{2^k}{R-r}  \Tilde{r}_k^{1-\sigma}\norm{u}_{p,R}\le N \frac{2^{\sigma k}}{(R-r)^{\sigma}} \norm{u}_{p,R}.
\stepcounter{equation}\tag{\theequation}\label{tt1}\end{align*}
On the other hand,
\begin{align*}
    I^1_{k,2} \le \int_{B^c_{\Tilde{r}_k}} \bigg(1_{|x+ y|< r_{{k+1}}}+ 1_{|x|< r_{{k+1}}}\bigg) |u(t,x+y)| |y|^{-d-\sigma}  \, dy:= I^1_{k,2,1} + I^1_{k,2,2},
\end{align*}
\begin{align*}
\norm{I^1_{k,2,1}}_{L_p(\rr^d_T)} \le   \int_{B^c_{\Tilde{r}_k}} |y|^{-d-\sigma} \, dy \norm{u}_{p,R} \le N(d,\sigma_0)\Tilde{r}_k^{-\sigma}\norm{u}_{p,R} \le N  \frac{2^{\sigma k}}{(R-r)^{\sigma}} \norm{u}_{p,R}. \stepcounter{equation}\tag{\theequation}\label{tt2}
\end{align*}
By H\"older's inequality and the Minkowski inequality,
\begin{align*}
\norm{I^1_{k,2,2}}_{L_p(\rr^d_T)}& \le N(d) r^{d/p}_{k+1} \bigg(\int_0^T \norm{I^1_{k,2,2}(t,\cdot)}_{L_\infty(B_{r_{k+1}})}^p \, dt\bigg)^{1/p}\\
&\le N r^{d/p}_{k+1} \frac{1 + r_{k+3}^{d+\sigma}}{\Tilde{r}_k^{d+\sigma}} \bigg(\int_0^T \norm{u(t,\cdot)}_{L_1(\rr^d,\psi)}^p \, dt\bigg)^{1/p}\\
&=  N r^{d/p}_{k+1} \frac{1 + r_{k+3}^{d+\sigma}}{\Tilde{r}_k^{d+\sigma}}  \norm{u}_{L_p((0,T);L_1(\rr^d,\psi))}\\
&\le N \frac{2^{(d+\sigma)k}}{(R-r)^{d+\sigma}}R^{d/p}(1 + R^{d+\sigma} ) \norm{u}_{L_p((0,T);L_1(\rr^d,\psi))},\stepcounter{equation}\tag{\theequation}\label{tt3}
\end{align*}
where for the second inequality, we used the fact that if $|y| \ge \Tilde{r}_k$ and $|x|< r_{k+1}$ then
$$\frac{1 + |x+y|^{d+ \sigma}}{|y|^{d+ \sigma}}\le \frac{1+ (r_{k+1}+ \Tilde{r}_k)^{d+\sigma}}{\Tilde{r}_k^{d+\sigma}}\le \frac{1 + r_{k+3}^{d+\sigma}}{\Tilde{r}_k^{d+\sigma}}.$$
Combining \eqref{tt1}, \eqref{tt2}, and \eqref{tt3}, we conclude
\begin{align*}
I_k &\le \norm{I^1_{k,1}}_{L_p(\rr^d_T)} + \norm{I^1_{k,2}}_{L_p(\rr^d_T)} \\
&\le N  \frac{2^{\sigma k}}{(R-r)^{\sigma}} \norm{u}_{p,R}  + N \frac{2^{(d+\sigma)k}}{(R-r)^{d+\sigma}}R^{d/p}(1 + R^{d+\sigma} ) \norm{u}_{L_p((0,T);L_1(\rr^d,\psi))}.
\end{align*}

{\em Case 2: $\sigma \in (1,2)$.} In this case, by \eqref{long},
\begin{align*}
&|L(\zeta_k u)(t,x) - \zeta_k Lu(t,x)|\\
&\le N(\Lambda)(2-\sigma) \int_{\rr^d}|\zeta_k(x+y) - \zeta_k(x)|
|u(t,x+y) - u(t,x)| |y|^{-d-\sigma}\,dy\\
&\quad +N(\Lambda)(2-\sigma) \int_{\rr^d}|\zeta_k(x+y) - \zeta_k(x)- y\cdot \nabla\zeta_k(x)| |u(t,x)||y|^{-d-\sigma} \,dy\\
&=: I^2_{k,1} +  I^2_{k,2}.
\end{align*}
By \eqref{001} and the fundamental theorem of calculus,
\begin{align*}
I^2_{k,1}
&\le  N(2-\sigma) \bigg(\int_{B_{\Tilde{r}_k}} + \int_{B^c_{\Tilde{r}_k}} \bigg) |\zeta_k(x+y) - \zeta_k(x)||u(t,x+y) - u(t,x)||y|^{-d-\sigma}\,dy\\
&\le N(2-\sigma)  \frac{2^k}{R-r} \int_{B_{\Tilde{r}_k}} \int_0^1 1_{|x|<r_{{k+2}}} |\nabla u(t,x + sy)| |y|^{2-d-\sigma}\,ds \,dy\\
&\quad + N \int_{B^c_{\Tilde{r}_k}} \bigg(1_{|x+ y|< r_{{k+1}}}+ 1_{|x|< r_{{k+1}}}\bigg) |u(t,x+y) - u(t,x)||y|^{-d-\sigma}  \, dy.
\end{align*}
Similar to \eqref{tt1}, \eqref{tt2}, and \eqref{tt3}, we have
\begin{align*}
\norm{I^2_{k,1}}_{L_p(\rr^d_T)} &\le  N \frac{2^{(\sigma-1) k}}{(R-r)^{\sigma-1}} \norm{Du}_{p,r_{k+3}} + N  \frac{2^{\sigma k}}{(R-r)^{\sigma}} \norm{u}_{p,R} \\
&\quad + N \frac{2^{(d+\sigma)k}}{(R-r)^{d+\sigma}}R^{d/p}(1 + R^{d+\sigma} ) \norm{u}_{L_p((0,T);L_1(\rr^d,\psi))}.\stepcounter{equation}\tag{\theequation}\label{ee1}
\end{align*}
For $I^2_{k,2}$, when $y \in B_{\Tilde{r}_k} $, the mean value theorem leads to
\begin{align*}
&|\zeta_k(x+y) - \zeta_k(x)- y\cdot \nabla\zeta_k(x)| \\
&\le  \norm{D^2 \zeta_k}_{L_{\infty}}|y|^21_{|x|<r_{{k+2}}}
\le N \frac{2^{2k}}{(R-r)^2}|y|^2 1_{|x|<r_{{k+2}}}.
\end{align*}
Thus,
\begin{align*}
I^2_{k,2}
&\le  N(2-\sigma) \bigg(\int_{B_{\Tilde{r}_k}} + \int_{B^c_{\Tilde{r}_k}} \bigg) |\zeta_k(x+y) - \zeta_k(x)- y\cdot \nabla\zeta_k(x)| |u(t,x)||y|^{-d-\sigma} \,dy\\
&\le N (2-\sigma)\frac{2^{2k}}{(R-r)^2} |u(t,x)| 1_{|x|<r_{{k+2}}} \int_{B_{\Tilde{r}_k}} |y|^{2-d-\sigma} \, dy\\
& \quad + N \int_{B^c_{\Tilde{r}_k}} \bigg(1_{|x+ y|< r_{{k+1}}}+ 1_{|x|< r_{{k+1}}}(1 + \frac{2^{k}}{(R-r)}|y|)\bigg) |u(t,x)| |y|^{-d-\sigma}  \, dy,
\end{align*}
which, similar to \eqref{tt1}, \eqref{tt2}, and \eqref{tt3}, implies that
\begin{align*}
\norm{I^2_{k,2}}_{L_p(\rr^d_T)} &\le  N \frac{2^{\sigma k}}{(R-r)^{\sigma}} \norm{u}_{p,R}\\
&\quad {+N \frac{2^{(d+\sigma)k}}{(R-r)^{d+\sigma}}R^{d/p}(1 + R^{d+\sigma} ) \norm{u}_{L_p((0,T);L_1(\rr^d,\psi))}}.
\stepcounter{equation}\tag{\theequation}\label{ee2}
\end{align*}
By \eqref{ee1} and \eqref{ee2},
\begin{align*}
I_k &\le \norm{I^2_{k,1}}_{L_p(\rr^d_T)} + \norm{I^2_{k,2}}_{L_p(\rr^d_T)} \\
&\le  N \frac{2^{(\sigma-1) k}}{(R-r)^{\sigma-1}} \norm{Du}_{p,r_{k+3}} + N  \frac{2^{\sigma k}}{(R-r)^{\sigma}} \norm{u}_{p,R}  \\
&\quad+ N \frac{2^{(d+\sigma)k}}{(R-r)^{d+\sigma}}R^{d/p}(1 + R^{d+\sigma} ) \norm{u}_{L_p((0,T);L_1(\rr^d,\psi))}.
\end{align*}

{\em Case 3: $\sigma =1$.} In this case, by \eqref{long} and \eqref{sig1}, for any $\delta_k >0$ to be determined,
\begin{align*}
&|L(\zeta_k u)(t,x) - \zeta_k Lu(t,x)|\\
&\le N \int_{B_{\delta_k}}|\zeta_k(x+y) - \zeta_k(x)||u(t,x+y) - u(t,x)||y|^{-d-1}\,dy\\
&\quad +N \int_{B_{\delta_k}}|\zeta_k(x+y) - \zeta_k(x)- y\cdot \nabla\zeta_k(x)| |u(t,x)||y|^{-d-1} \,dy\\
&\quad +N \int_{B^c_{\delta_k}}  |\zeta_k(x+y) - \zeta_k(x)||u(t,x+y) ||y|^{-d-1}\,dy\\
&=: I^3_{k,1} +  I^3_{k,2} +  I^3_{k,3}.
\end{align*}
The estimates of $I^3_{k,1}$, $I^3_{k,2}$, and  $I^3_{k,3} $ are similar to that of $I^2_{k,1}$, $I^2_{k,2}$, and $I^1_{k,2}$ respectively. It follows that
\begin{align*}
\norm{I^3_{k,1}}_{L_p(\rr^d_T)} &\le  N \frac{2^{k}}{(R-r)}\delta_k\norm{Du}_{p,r_{k+3}}, \quad
  \norm{I^3_{k,2}}_{L_p(\rr^d_T)} \le  N \frac{2^{2k}}{(R-r)^2}\delta_k \norm{u}_{p,R},
\end{align*}
and
\begin{align*}
\norm{I^3_{k,3}}_{L_p(\rr^d_T)} &\le  N \delta_k^{-1} \norm{u}_{p,R} +  N R^{d/p} \frac{1 + (R + \delta_k)^{d+1}}{\delta_k^{d+1}}  \norm{u}_{L_p((0,T);L_1(\rr^d,\psi))},
\end{align*}
where $N$ is independent of the choice of $\delta_k$. For any $\ep \in (0,1)$, let $\delta_k = \ep^3\Tilde{r}_k/N$.
Then,
\begin{align*}
I_k& \le \norm{I^3_{k,1}}_{L_p(\rr^d_T)} + \norm{I^3_{k,2}}_{L_p(\rr^d_T)} + \norm{I^3_{k,2}}_{L_p(\rr^d_T)}\\
&\le \ep^3\norm{Du}_{p,r_{k+3}} +  N\frac{2^{k}}{R-r}\ep^{-3} \norm{u}_{p,R} \\
&\quad +  N R^{d/p}\bigg(1+ \frac{2^{(d+1)k}\ep^{-3(d+1)}}{(R-r)^{d+1}}  (1 + R^{d+1} )\bigg) \norm{u}_{L_p((0,T);L_1(\rr^d,\psi))}.
\end{align*}
The lemma is proved.
\end{proof}

\begin{assumption}\label{ass44}
For given $E \subset F \subset (-\infty,T)$, if
$$\left| \mathcal{C}_R(t,x) \cap E \right| \geq \gamma |\mathcal{C}_R(t,x)|$$
for $(t,x) \in (-\infty,T] \times \rr^d$ and $R >0$, then
\[
 \widehat{\mathcal{C}}_R (t,x) \subset F,
\]
where $\mathcal{C}_R(t,x)$ and $\widehat{\mathcal{C}}_R (t,x)$ are defined in \eqref{qq}.
\end{assumption}
\begin{lemma}[Crawling of ink spots]\label{a6}
Let $\gamma \in (0,1)$ and $|E| < \infty$.
Suppose that $E \subset F \subset (-\infty,T) \times \rr^d$ satisfy Assumption \ref{ass44}.
Then
\[
|E| \leq N(d,\alpha) \gamma |F|.
\]
\end{lemma}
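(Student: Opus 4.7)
The statement is a parabolic version of the classical Krylov--Safonov ``crawling of ink spots'' lemma. The plan is to run a Vitali-type covering argument on cylinders centered at Lebesgue points of $E$, choose at each such point the largest radius at which the density of $E$ is still $\geq \gamma$, and then invoke Assumption \ref{ass44} to trap those cylinders (truncated at $\{t \leq T\}$) inside $F$.

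First I would pass to Lebesgue points: by the Lebesgue differentiation theorem applied to the parabolic cylinders $\mathcal{C}_R(t_0,x_0)$, for almost every $(t_0,x_0) \in E$ the density $|\mathcal{C}_R(t_0,x_0) \cap E|/|\mathcal{C}_R(t_0,x_0)|$ tends to $1$ as $R \to 0$, and since $|E| < \infty$ while $|\mathcal{C}_R(t_0,x_0)| \to \infty$, it tends to $0$ as $R \to \infty$. Consequently
\[
R(t_0,x_0) := \sup\bigl\{R > 0 : |\mathcal{C}_R(t_0,x_0) \cap E| \geq \gamma |\mathcal{C}_R(t_0,x_0)|\bigr\}
\]
is strictly between $0$ and a finite bound coming from $\gamma |\mathcal{C}_R| \leq |E|$. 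Picking $R_n \nearrow R(t_0,x_0)$ along which the density condition holds and applying Assumption \ref{ass44} gives $\widehat{\mathcal{C}}_{R_n}(t_0,x_0) \subset F$; passing to the increasing union of these open cylinders yields $\widehat{\mathcal{C}}_{R(t_0,x_0)}(t_0,x_0) \subset F$.

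Next I would run a Vitali covering lemma on the family $\{\mathcal{C}_{R(t_0,x_0)}(t_0,x_0)\}$, which has uniformly bounded radii. The key geometric input is that there is a constant $c = c(\alpha,\sigma)$ such that whenever $\mathcal{C}_{R_2}(t_2,x_2) \cap \mathcal{C}_{R_1}(t_1,x_1) \neq \emptyset$ with $R_2 \leq R_1$, one has $\mathcal{C}_{R_2}(t_2,x_2) \subset \mathcal{C}_{cR_1}(t_1,x_1)$; this is a routine check using $|x_2 - x_1| < 2R_1$ and $|t_2 - t_1| < 2R_1^{\sigma/\alpha}$, which gives $c = 3^{\max(1,\alpha/\sigma)}$. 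The standard Vitali construction then produces a countable disjoint subcollection $\{\mathcal{C}_{R_i}(t_i,x_i)\}_i$ whose $c$-fold dilates cover the Lebesgue points of $E$ up to measure zero.

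Putting everything together,
\[
|E| \leq \Bigl|\bigcup_i \mathcal{C}_{cR_i}(t_i,x_i)\Bigr| \leq c^{d+\sigma/\alpha} \sum_i |\mathcal{C}_{R_i}(t_i,x_i)| \leq \frac{c^{d+\sigma/\alpha}}{\gamma} \sum_i |\mathcal{C}_{R_i}(t_i,x_i) \cap E|,
\]
and since $E \subset \{t \leq T\} \times \rr^d$ one has $\mathcal{C}_{R_i}(t_i,x_i) \cap E \subset \widehat{\mathcal{C}}_{R_i}(t_i,x_i) \subset F$, with the sets $\widehat{\mathcal{C}}_{R_i}(t_i,x_i)$ pairwise disjoint; the last sum is therefore bounded by $|F|$, yielding $|E| \leq N(d,\alpha,\sigma) \gamma |F|$. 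The main delicate point is justifying the limiting step $\widehat{\mathcal{C}}_{R(t_0,x_0)}(t_0,x_0) \subset F$ at the critical radius (which relies on the cylinders being open so that the supremum is realized by an increasing union) and selecting the Vitali dilation constant correctly in the anisotropic parabolic setting; modulo these, the argument is a standard ``halving/Vitali'' covering argument.
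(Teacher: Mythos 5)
The paper's proof of this lemma is a one-line citation to \cite[Lemma A.20]{dong19}, so your proposal is attempting to reconstruct an argument the paper defers to a reference. Your outline follows the right general strategy, but there is a genuine gap in the final estimate: the chain of inequalities you wrote,
\[
|E| \leq \Bigl|\bigcup_i \mathcal{C}_{cR_i}(t_i,x_i)\Bigr| \leq c^{d+\sigma/\alpha} \sum_i |\mathcal{C}_{R_i}(t_i,x_i)| \leq \frac{c^{d+\sigma/\alpha}}{\gamma} \sum_i |\mathcal{C}_{R_i}(t_i,x_i) \cap E| \leq \frac{c^{d+\sigma/\alpha}}{\gamma}\,|F|,
\]
produces $|E| \leq (N/\gamma)\,|F|$, with $\gamma$ in the \emph{denominator}, which is the reverse of the stated conclusion $|E| \leq N\gamma |F|$ and is useless for the level-set iteration in the proof of Theorem~\ref{main} (where one needs the right-hand side to be small when $\gamma$ is small). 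The third inequality, which uses $|\mathcal{C}_{R_i} \cap E| \geq \gamma|\mathcal{C}_{R_i}|$ to replace $|\mathcal{C}_{R_i}|$ by $\gamma^{-1}|\mathcal{C}_{R_i}\cap E|$, is exactly what puts $\gamma$ on the wrong side.

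The fix is to use the maximality of the critical radius $R(t_0,x_0)$ in the \emph{other} direction. Since $R_i := R(t_i,x_i)$ is the supremum of all $R$ with $|\mathcal{C}_R(t_i,x_i)\cap E| \geq \gamma |\mathcal{C}_R(t_i,x_i)|$, every strictly larger radius has density strictly below $\gamma$; in particular $|\mathcal{C}_{cR_i}(t_i,x_i) \cap E| < \gamma |\mathcal{C}_{cR_i}(t_i,x_i)|$ because $c>1$. One should therefore estimate $|E|$ by the measure of $E$ inside the dilates rather than the full measure of the dilates:
\[
|E| \leq \sum_i |\mathcal{C}_{cR_i}(t_i,x_i)\cap E| \leq \gamma \sum_i |\mathcal{C}_{cR_i}(t_i,x_i)| = \gamma\, c^{\,d+\sigma/\alpha} \sum_i |\mathcal{C}_{R_i}(t_i,x_i)|.
\]
Now the disjointness of the Vitali-selected cylinders, the inclusion $\widehat{\mathcal{C}}_{R_i}(t_i,x_i) \subset F$ from Assumption~\ref{ass44}, and the elementary fact that $|\widehat{\mathcal{C}}_{R_i}(t_i,x_i)| \geq \tfrac12 |\mathcal{C}_{R_i}(t_i,x_i)|$ (since the center has $t_i < T$, the lower half of the cylinder survives the truncation) give $\sum_i |\mathcal{C}_{R_i}(t_i,x_i)| \leq 2\sum_i |\widehat{\mathcal{C}}_{R_i}(t_i,x_i)| \leq 2|F|$, yielding $|E| \leq 2\gamma\, c^{\,d+\sigma/\alpha}\,|F|$. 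In short, you never invoked the ``$<\gamma$ beyond the critical radius'' half of the definition of $R(t_0,x_0)$, and that half is precisely what carries the factor of $\gamma$; the ``$\geq\gamma$ at the critical radius'' half you used instead inverts it. Once corrected, your argument matches the standard proof to which the paper is implicitly appealing.
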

\begin{proof}
See \cite[Lemma A.20]{dong19}
\end{proof}

\begin{lemma}[Sobolev embeddings]\label{a5}
Let $\alpha \in (0,1]$, $\sigma\in(0,2)$, $T \in (0, \infty)$, $p  \in (1, \infty)$, and $u \in {\hh_{p,0}^{\alpha,\sigma}(T)}$.
\begin{enumerate}
    \item If $p < d/\sigma + 1/\alpha$, then there exists $q\in(p,\infty)$ satisfying
    \[1/q= 1/p -\alpha\sigma/(\alpha d + \sigma)
    \]
    such that for all $l\in[p,q]$,
    \[ \norm{u}_{L_l(\rr^d_T)} \le N  \norm{u}_{\hh_{p}^{\alpha,\sigma}(T)}, \stepcounter{equation}\tag{\theequation}\label{am11}
\]
where $N = N(d,\alpha,\sigma,p,l,T)$.
    \item If $p = d/\sigma + 1/\alpha$, then the same estimate holds for $l\in[p,\infty)$.
    \item If $p > d/\sigma + 1/\alpha$, then
    \[ \norm{u}_{L_\infty(\rr^d_T)} \le N  \norm{u}_{\hh_{p}^{\alpha,\sigma}(T)},
\]
where $N = N(d,\alpha,\sigma,p,{T})$.
\end{enumerate}
\end{lemma}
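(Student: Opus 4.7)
The plan is to recognize this as the anisotropic (parabolic) Sobolev embedding associated with the scaling $t \sim r^{\sigma/\alpha}$, $x \sim r$, under which the effective ``parabolic dimension'' is $D = d + \sigma/\alpha$ and the order of regularity is $\sigma$ in space. Note that $1/p - 1/q = \sigma/D = \alpha\sigma/(\alpha d + \sigma)$ matches the statement, and the threshold $p < d/\sigma + 1/\alpha$ is precisely $p\sigma < D$. First I would reduce, by the density of smooth functions vanishing at $t=0$ in $\hh_{p,0}^{\alpha,\sigma}(T)$, to the case $u \in C_0^\infty([0,T] \times \rr^d)$ with $u(0,\cdot) = 0$, extended by zero for $t < 0$; the zero initial condition ensures the Caputo derivative agrees with the Riemann--Liouville derivative on the extension, so that all norms are preserved.

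The core of the argument is the fundamental solution representation. Set $f := \partial_t^\alpha u + (1-\Delta)^{\sigma/2} u$, so that $\|f\|_{L_p(\rr^d_T)} \le \|u\|_{\hh_p^{\alpha,\sigma}(T)}$ (the spatial part follows from Mikhlin since $|\xi|^\sigma(1+|\xi|^2)^{-\sigma/2}$ is a bounded multiplier), and $u$ is the unique solution to $\partial_t^\alpha u + (1-\Delta)^{\sigma/2} u = f$ with zero initial. Then $u = G * f$ on $(0,T) \times \rr^d$, where $G$ is the fundamental solution constructed in \cite{kim20,333,LP}. The pointwise estimates on $G$ in those references reflect the parabolic scaling together with exponential decay at large scales coming from the ``$1$'' in $(1-\Delta)^{\sigma/2}$, and yield $G \in L_s((0,T) \times \rr^d)$ for the Young exponent $s$ defined by $1 + 1/q = 1/s + 1/p$. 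Applying Young's convolution inequality (or, equivalently, a Hardy--Littlewood--Sobolev estimate for the anisotropic Riesz-type kernel) then gives
\[
\|u\|_{L_q(\rr^d_T)} \le N\|G\|_{L_s}\|f\|_{L_p(\rr^d_T)} \le N\|u\|_{\hh_p^{\alpha,\sigma}(T)},
\]
which is case (1) at the endpoint $l = q$. Intermediate $l \in [p, q]$ follows by log-convexity $\|u\|_{L_l} \le \|u\|_{L_p}^{1-\theta}\|u\|_{L_q}^\theta$ with $1/l = (1-\theta)/p + \theta/q$, combined with Young's inequality to turn the product into a sum; the $\|u\|_{L_p}$ factor is controlled via H\"older in time against $\|u\|_{L_p((0,T); H_p^\sigma)}$.

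For case (2), when $p = d/\sigma + 1/\alpha$, I would fix $l \in [p, \infty)$ and pick $\tilde p < p$ close enough to $p$ that the corresponding exponent $\tilde q$ of case (1) satisfies $\tilde q > l$; since $(0,T)$ is bounded, H\"older in $t$ supplies $\hh_{p,0}^{\alpha,\sigma}(T) \hookrightarrow \hh_{\tilde p,0}^{\alpha,\sigma}(T)$ with a constant depending on $T$, and case (1) for $\tilde p$ then delivers the required $L_l$ bound. Case (3), $p > d/\sigma + 1/\alpha$, is the parabolic Morrey embedding and is the $L_\infty$ corollary of the H\"older continuity statement of Lemma~\ref{aa7}; alternatively, one notes that $s$ in the Young relation satisfies $1/s > 1$, so that in fact $G \in L_\infty$ in the relevant sense and convolution directly produces an $L_\infty$ bound on $u$. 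The main obstacle is the precise pointwise kernel estimate on $G$ accommodating the full range $\alpha \in (0,1]$ uniformly and capturing both the small-scale parabolic scaling and the large-scale exponential decay; however, this is exactly what is furnished by the heat-kernel-type bounds in the cited works, and once invoked the remainder of the argument is a routine convolution and interpolation estimate.
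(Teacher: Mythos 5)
Your approach is genuinely different from the paper's. The paper proves this lemma abstractly by invoking Sobolevskii's mixed derivative theorem to get $\hh_{p,0}^{\alpha,\sigma}(T)\hookrightarrow H_p^{\alpha(1-\theta)}\bigl([0,T];H_p^{\sigma\theta}(\rr^d)\bigr)$ for all $\theta\in[0,1]$, choosing $\theta=\alpha d/(\alpha d+\sigma)$, and then chaining the one-dimensional Sobolev embedding in $t$ with the $d$-dimensional fractional Sobolev embedding in $x$. This is short and requires no kernel information at all. Your proposal instead writes $u=G*f$ with $f=\partial_t^\alpha u+(1-\Delta)^{\sigma/2}u$ and appeals to pointwise estimates on the fundamental solution, which is considerably heavier machinery for the same conclusion; it does have the advantage of giving somewhat more quantitative information, but for this lemma the mixed derivative route is cleaner.

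There are, however, several concrete problems with the proposal as written. First, in case (1) at the endpoint $l=q$ the claim ``$G\in L_s((0,T)\times\rr^d)$'' for the Young exponent $1+1/q=1/s+1/p$ is false: computing with the scaling $G(t,x)=t^{\alpha-1-\alpha d/\sigma}\Phi(t^{-\alpha/\sigma}x)$ shows that $\int_0^1\|G(t,\cdot)\|_{L_s}^s\,dt$ is exactly borderline divergent at that $s$, so $G$ lies only in weak $L_s$. The HLS/weak-Young alternative you mention in parentheses is in fact the \emph{required} route, not a variant, and should be promoted to the main argument; as stated, applying strong Young is an error. Second, in case (3) the sentence ``$1/s>1$, so that in fact $G\in L_\infty$'' is incorrect: with $q=\infty$ the Young relation gives $s=p'$, so what one needs is $G\in L_{p'}((0,T)\times\rr^d)$, and then $\|G*f\|_{L_\infty}\le\|G\|_{L_{p'}}\|f\|_{L_p}$; the hypothesis $p>d/\sigma+1/\alpha$ is precisely what makes the temporal integral convergent at exponent $p'$, but this is not what you wrote. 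Third, your fallback reference to Lemma \ref{aa7} for case (3) only covers $p\in(d/\sigma+1/\alpha,\,2d/\sigma+2/\alpha)$, so it does not handle the full range required here. Finally, the assertion of ``exponential decay coming from the `$1$' in $(1-\Delta)^{\sigma/2}$'' is not accurate for $\alpha<1$, since the Mittag--Leffler temporal profile decays only polynomially; this is harmless here because $T<\infty$, but the heuristic is misleading. Your reduction to smooth compactly supported $u$ by density and the log-convexity interpolation for intermediate $l$ are both fine.
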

\begin{proof}
(1) By the mixed derivative theorem \cite{mi}, we have
\[
\hh_{p,0}^{\alpha,\sigma}(T) \hookrightarrow H_p^{\alpha(1-\theta)}([0,T];H_p^{\sigma\theta}(\rr^d)) \qx{for all} \theta\in[0,1].
\]
Let $\theta = \alpha d /(\alpha d + \sigma)$ and
$$1/q = 1/p - \alpha(1-\theta) = 1/p - \sigma\theta/d= 1/p -\alpha\sigma/(\alpha d + \sigma).$$ The Sobolev embeddings
$$ H_p^{\alpha(1-\theta)}([0,T]) \hookrightarrow  L_q([0,T])\qx{and}H_p^{\sigma\theta}(\rr^d)\hookrightarrow L_q(\rr^d),$$
imply that
\begin{align*}
&\norm{u}_{L_q(\rr^d_T)} \le N \big( \int_0^T \norm{u(t,\cdot)}^q_{H_p^{\sigma\theta}(\rr^d)} dt \big)^{1/q} \\
&\le N(T) \norm{u}_{H_p^{\alpha(1-\theta)}([0,T];H_p^{\sigma\theta}(\rr^d))}
\le N \norm{u}_{\hh_{p}^{\alpha,\sigma}(T)}.
\end{align*}
Since \eqref{am11} holds for $l=p$, it holds for all $l \in [p,q]$ by H\"older's inequality.

{(2)
Using the embedding
$$ H_p^{\alpha(1-\theta)}([0,T]) \hookrightarrow  L_l([0,T])\qx{and}H_p^{\sigma\theta}(\rr^d)\hookrightarrow L_l(\rr^d),$$
the proof is similar to that of (1).}

(3) Due to the facts that $\alpha(1-\theta)p > 1$ and $\sigma\theta p > d $, and the Sobolev embeddings
$$ H_p^{\alpha(1-\theta)}([0,T]) \hookrightarrow  L_\infty([0,T])\qx{and}H_p^{\sigma\theta}(\rr^d)\hookrightarrow L_\infty(\rr^d),$$ we have
\begin{align*}
&\norm{u}_{L_\infty(\rr^d_T)} \le  N(T) \norm{u}_{H_p^{\alpha(1-\theta)}([0,T];H_p^{\sigma\theta}(\rr^d))}
\le N \norm{u}_{\hh_{p}^{\alpha,\sigma}(T)}.
\end{align*}

\end{proof}

Next, we prove an embedding into H\"older spaces. Note that we only use the result when $\alpha = 1 $ for this paper.
\begin{lemma}[A H\"older estimate] \label{aa7}
Let $\alpha \in (0,1]$, $\sigma\in(0,2)$, $T \in (0, \infty)$, $p  \in (1, \infty)$, $p \in (d/\sigma + 1/\alpha,2d/\sigma + 2/\alpha)$, and $u \in {\hh_{p,0}^{\alpha,\sigma}(T)}$. There exists $\tau = \sigma -(d+\sigma/\alpha)/p \in (0,1)$ such that
    \[ \norm{u}_{C^{\tau\alpha/\sigma,\tau}(\rr^d_T)} \le N  \norm{u}_{\hh_{p}^{\alpha,\sigma}(T)},
\]
where $N = N(d,\alpha,\sigma,p,T)$.
\end{lemma}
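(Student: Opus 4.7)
My plan is to prove this anisotropic Sobolev--Hölder embedding via a trace theorem for Banach-valued Bessel potential spaces combined with Besov--Hölder embeddings in the spatial and temporal variables separately. Since $\hh_{p,0}^{\alpha,\sigma}(T)$ is by definition an intersection $L_p([0,T]; H_p^\sigma(\rr^d)) \cap H_p^\alpha([0,T]; L_p(\rr^d))$, the standard trace/interpolation theorem for Banach-valued Bessel potential spaces (e.g., Lunardi's \emph{Interpolation Theory}) yields
\[
\hh_{p,0}^{\alpha,\sigma}(T) \hookrightarrow C\big([0,T]; (L_p(\rr^d), H_p^\sigma(\rr^d))_{1 - 1/(\alpha p), p}\big) = C\big([0,T]; B_{p,p}^{\sigma(1 - 1/(\alpha p))}(\rr^d)\big),
\]
using the interpolation identity $(L_p, H_p^\sigma)_{\theta, p} = B_{p,p}^{\sigma\theta}$. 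The spatial Besov--Hölder embedding $B_{p,p}^s(\rr^d) \hookrightarrow C^{s - d/p}(\rr^d)$ with $s = \sigma(1 - 1/(\alpha p))$ then produces spatial Hölder regularity of exponent $\sigma - \sigma/(\alpha p) - d/p = \tau$, uniformly in $t$. The upper restriction $p < 2d/\sigma + 2/\alpha$ keeps $\tau < \sigma/2 < 1$, so the target is a genuine Hölder space.

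Symmetrically, I would apply the same trace--embedding idea with the roles of space and time interchanged to extract temporal Hölder regularity: this produces $u \in C(\rr^d; B_{p,p}^{\alpha(1 - d/(\sigma p))}([0,T]))$, which embeds into $C(\rr^d; C^{\tau\alpha/\sigma}([0,T]))$ via the temporal Besov--Hölder embedding, since $\alpha(1 - d/(\sigma p)) - 1/p = \tau\alpha/\sigma$. Combining the two one-sided estimates gives the anisotropic Hölder bound $\norm{u}_{C^{\tau\alpha/\sigma, \tau}(\rr^d_T)} \le N \norm{u}_{\hh_p^{\alpha,\sigma}(T)}$.

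The most delicate step will be justifying the trace theorem and the interpolation identity at the precise indices required here, rather than merely citing them. A cleaner alternative, closer in spirit to the proof of Lemma \ref{a5}(3), is to combine the mixed derivative theorem (which gives $\hh_{p,0}^{\alpha,\sigma}(T) \hookrightarrow H_p^{\alpha(1-\theta)}([0,T]; H_p^{\sigma\theta}(\rr^d))$ for all $\theta \in [0,1]$) with two separate Sobolev--Hölder embeddings, choosing one value of $\theta$ to control the spatial Hölder seminorm and a different value of $\theta$ to control the temporal Hölder seminorm. The balance between the constraints $\sigma\theta p > d$ and $\alpha(1-\theta)p > 1$, together with the extra margin required to reach exponents $\tau$ in space and $\tau\alpha/\sigma$ in time, is the technical heart of the argument and the place where one must be careful: the naive single-$\theta$ choice only delivers exponent $\tau/2$, so genuinely two choices (or the Besov--trace route above) are needed to capture the sharp $\tau$.
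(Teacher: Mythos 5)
Your proposed route is genuinely different from the paper's. The paper's proof is an elementary \emph{self-improvement (absorption) argument}: it defines the anisotropic H\"older seminorm $K$ of $u$ (finite a priori for the smooth approximants one works with), splits $|u(t_1,x)-u(t_2,y)|$ into a pure time increment $J_1$ and a pure space increment $J_2$, estimates each by averaging the one-dimensional H\"older inequalities --- the time embedding \cite[Lemma A.14]{dong19} for $J_1$ and the spatial embedding $H_p^\sigma(\rr^d)\hookrightarrow C^{\sigma-d/p}(\rr^d)$ for $J_2$ --- over a ball $B_\rho(x)$ (resp.\ interval of radius $\rho^{\sigma/\alpha}$) with $\rho=\ep(|t_1-t_2|^{\alpha/\sigma}+|x-y|)$, and arrives at
$K\le 4\ep^\tau K + N(\ep^{-d/p}+\ep^{-\sigma/(\alpha p)})\norm{u}_{\hh_p^{\alpha,\sigma}(T)}$,
then absorbs by choosing $\ep$ small. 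Averaging over a ball of the same parabolic size as the increment is precisely the device that produces the \emph{sharp} exponent $\tau$ and sidesteps the borderline degeneracy you identify. Your primary route (the Banach-valued trace theorem $H_p^\alpha(I;X)\cap L_p(I;Y)\hookrightarrow C(I;(X,Y)_{1-1/(\alpha p),p})$, the real-interpolation identity $(L_p,H_p^\sigma)_{\theta,p}=B^{\sigma\theta}_{p,p}$, and the Besov--H\"older embedding, applied symmetrically in the two variables) does reach the sharp $\tau$, and your exponent arithmetic checks out (both $\sigma(1-\tfrac1{\alpha p})-\tfrac dp=\tau$ and $\alpha(1-\tfrac d{\sigma p})-\tfrac1p=\tau\alpha/\sigma$, and $p<2d/\sigma+2/\alpha$ indeed forces $\tau<\sigma/2<1$); but it relies on machinery (fractional-order Banach-valued trace theory, interpolation identities, equivalence of the Caputo-based space with $H_p^\alpha(I;L_p)\cap L_p(I;H_p^\sigma)$ under the zero initial condition) whose precise form would need careful sourcing, which you candidly flag. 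Your alternative route (mixed derivative theorem with two separate values of $\theta$) hits exactly the obstruction you point out: the sharp spatial exponent demands $\theta=1-1/(\alpha p)$, i.e.\ $\alpha(1-\theta)p=1$, which is the borderline case for the Sobolev embedding in time, while a single interior $\theta$ balanced by parabolic scaling gives only $\tau/2$ (your computation of this is correct). So as stated that branch of your proposal still has a gap; the absorption trick in the paper is, in effect, the elementary substitute for the borderline trace theorem that closes it.
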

\begin{proof}
Without loss of generality, we assume that $u \in C_0^\infty([0,T]\times \rr^d)$ with $u(0,\cdot) = 0$. Denote
\[
K = \sup \big\{\frac{|u(t_1,x) - u(t_2,y)|}{|t_1-t_2|^{\tau \alpha/\sigma} + |x-y|^\tau } : (t_1,x) ,(t_2,y) \in \rr^d_T, 0< |t_1-t_2|^{\alpha/\sigma} + |x-y|\le 1 \big\}
\]
and $$\rho= \ep (|t_1-t_2|^{\alpha/\sigma} + |x-y|), $$
where $\ep\in (0,1) $ is a constant to be determined.

Note that by Lemma \ref{a5} and the fact that $p> d/\sigma + 1/ \alpha$, it suffices to prove \[K \le N \norm{u}_{\hh_{p}^{\alpha,\sigma}(T)}. \stepcounter{equation}\tag{\theequation}\label{am8}\]
To this end, we write
\[
|u(t_1,x) - u(t_2,y)| \le |u(t_1,x) - u(t_2,x)| + |u(t_2,x) - u(t_2,y)| := J_1 + J_2.\stepcounter{equation}\tag{\theequation}\label{am3}
\]
For $J_1$, by taking $z\in B_\rho(x)$ and the triangle inequality, we have
\begin{align*}
J_1
&\le  |u(t_1,x) - u(t_1,z)| + |u(t_2,x) - u(t_2,z)| +  |u(t_1,z) - u(t_2,z)| \\
&\le 2K \rho^\tau + |u(t_1,z) - u(t_2,z)|\\
&\le 2K \rho^\tau + N(\alpha,p)|t_1-t_2|^{\alpha - 1/p}\norm{\partial_t^\alpha u(\cdot,z)}_{L_p(0,T)}, \stepcounter{equation}\tag{\theequation}\label{am1}
\end{align*}
where for the last inequality, we used \cite[Lemma A.14]{dong19} together with the fact that $\alpha > 1/p$.
Then by taking the average over $B_\rho(x)$ on both sides of \eqref{am1} together with H\"older's inequality, we get
\begin{align*}
J_1 &\le 2K \rho^\tau+ N(d,\alpha,p)|t_1-t_2|^{\alpha - 1/p}\rho^{-d/p}\norm{\partial_t^\alpha u}_{L_p(\rr^d_T)}\\
&\le 2K \rho^\tau+ N(d,\alpha,p)\ep^{\sigma/(\alpha p)-\sigma }\rho^{\tau}\norm{u}_{\hh_{p}^{\alpha,\sigma}(T)}. \stepcounter{equation}\tag{\theequation}\label{am4}
\end{align*}

On the other hand, for $s\in I:= ( t_2 - \rho^{\sigma/\alpha},t_2+ \rho^{\sigma/\alpha}) \cap (0,T)$,
\begin{align*}
J_2
&\le  |u(t_2,x) - u(s,x)| +  |u(t_2,y) - u(s,y)| +   |u(s,x) - u(s,y)| \\
&\le 2K \rho^\tau + |u(s,x) - u(s,y)|\\
&\le 2K \rho^\tau + N(d,\sigma,p)|x-y|^{\sigma - d/p}\norm{ u(s,\cdot)}_{H^\sigma_p(\rr^d)}, \stepcounter{equation}\tag{\theequation}\label{am2}
\end{align*}
where for the last inequality, we used the embedding $H^\sigma_p(\rr^d)\hookrightarrow C^{\sigma-d/p}(\rr^d)$. Then by taking the average over $I$ on both sides of \eqref{am2} together with H\"older's inequality, we obtain
\begin{align*}
J_2 &\le 2K \rho^\tau+  N(d,\sigma,p)|x-y|^{\sigma - d/p}\rho^{{-\sigma}/(\alpha p)}\norm{u}_{L_p((0,T);H^\sigma_p(\rr^d))}\\
&\le 2K \rho^\tau+ N(d,\sigma,p)\ep^{d/p-\sigma }\rho^{\tau}\norm{u}_{\hh_{p}^{\alpha,\sigma}(T)}.\stepcounter{equation}\tag{\theequation}\label{am5}
\end{align*}
Note that $N$ is independent of $T$ because we can assume that $|I|\ge \rho^{\sigma/\alpha}$ by extending $u(t,\cdot)$ to be zero for $t<0$.

Combining \eqref{am3}, \eqref{am4}, and \eqref{am5}, we have
\[
|u(t_1,x) - u(t_2,y)| \le 4K\rho^\tau + N (\ep^{ \sigma/(\alpha p)-\sigma}+\ep^{d/p-\sigma } )\rho^{\tau}\norm{u}_{\hh_{p}^{\alpha,\sigma}(T)},
\]
where $N = N (d,\alpha,\sigma,p)$, which implies that
\[
K \le 4\ep^\tau K +  N (\ep^{-d/p}+\ep^{-\sigma/(\alpha p) } )\rho^{\tau}\norm{u}_{\hh_{p}^{\alpha,\sigma}(T)}.
\]
Thus, by picking sufficiently small $\ep$ so that $4\ep^\tau<1/2$, we arrive at \eqref{am8}, and the lemma is proved.
\end{proof}

\bibliographystyle{plain}

\end{document}